\numberwithin{equation}{section}
\newtheorem{theo}{Theorem}
\newtheorem{lem}{Lemma}[section]
\newtheorem{defi}{Definition}[section]
\newtheorem{cor}{Corollary}[section]
\newtheorem{prop}{Proposition}[section]
\newcommand{\eps}{\varepsilon}
\newcommand{\R}{\mathbb{R}}
\begin{document}
\title[]{A Wasserstein gradient flow approach to Poisson-Nernst-Planck equations}
\date{}
\author[]{
  David Kinderlehrer}
    \address{Department of Mathematical Sciences, Carnegie Mellon University, Pittsburgh, PA 15213, USA}
\author[]{ L\'eonard Monsaingeon}
\address{CAMGSD Instituto Superior T\'ecnico, Av. Rovisco Pais 1049-001 Lisbon, Portugal}
\author[]{Xiang Xu}
\address{Department of Mathematics and Statistics, Old Dominion University, 2300 Engineering \& Computational Sciences Bldg
Norfolk, VA 23529, USA}
\email{\tt davidk@andrew.cmu.edu} \email{\tt
leonard.monsaingeon@ist.utl.pt} \email{\tt x2xu@odu.edu}

\maketitle

\begin{abstract}
The Poisson-Nernst-Planck system of equations used to model ionic
transport is interpreted as a gradient flow for the Wasserstein
distance and a free energy in the space of probability measures with finite second
moment. A variational scheme is then set up and is the starting
point of the construction of global weak solutions in a unified
framework for the cases of both linear and nonlinear diffusion. The proof
of the main results relies on the derivation of additional estimates
based on the flow interchange technique developed by Matthes, McCann, and Savar\'e in \cite{MR2581977}.
\end{abstract}

\tableofcontents

\section{Introduction}

The Poisson-Nernst-Planck (PNP) system of equations \cite{MR1818867,
MR1063852} is the principal description of ionic transport of
several interacting species. It has been applied in a number of
contexts ranging from electrical storage devices to molecular
biology, at times coupled to Navier-Stokes or other systems.
The basic system is to
find $u(t,x)\geq 0$ and $v(t,x)\geq 0$ satisfying
\begin{equation}
\begin{aligned}
 \partial_t u  &= \Delta u^m+{\rm div}\left(u\nabla\left(U+\psi\right)\right),\\
 \partial_t v  &= \Delta v^m+{\rm div}\left(v\nabla\left(V-\psi\right)\right), \qquad t\geq 0,\,x\in \R^d,\,d\geq 3, \\
 -\Delta \psi & =  u-v,
\end{aligned}
\label{eq:PNP}
\end{equation}
for some suitable initial conditions $u|_{t=0}=u^0$ and $v|_{t=0}=v^0$.  The unknowns $u$ and $v$ represent the
density of some positively and negatively charged particles. Here $m
\geq 1$ is a chosen fixed nonlinear diffusion exponent. Note that \eqref{eq:PNP}
formally preserves the $L^1$ mass
$$\int_{\R^d}
u(t,x)\mathrm{d}x=\int_{\R^d} u^0(x)\mathrm{d}x \quad {\rm and} \quad\int_{\R^d}
v(t,x)\mathrm{d}x=\int_{\R^d} v^0(x)\mathrm{d}x
$$
for all $t\geq 0$, which physically represents the conservation of total charge of the
individual species. For initial masses
$\int_{\R^d}u^0(x)\mathrm{d}x=\int_{\R^d}v^0(x)\mathrm{d}x$, a simple
rescaling of time and space allows normalization of masses to unity, and
without loss of generality we consider $u(t,x),v(t,x)$ to be
probability densities.
The external potentials $U(x)$ and $V(x)$ are prescribed and
sufficiently smooth. $\psi(x)$ from the Gauss Law is the
self-consistent electrostatic potential created by the two charge
carriers according to the last equation in \eqref{eq:PNP}. The first
two equations in \eqref{eq:PNP} are called Nernst-Planck equations
and describe electro-diffusion and electrophoresis according to the
Fick and Kohlrausch laws, respectively, while the last equation in
\eqref{eq:PNP} corresponds to the electrostatic Poisson law. The
boundary condition for this coupling equation will always be
understood in the sense of the Newtonian potential:
we shall always implicitly
write
\begin{equation}
 \label{eq:def_Newton_potential}
-\Delta \psi=u-v\quad \Leftrightarrow \quad
\psi=G*(u-v)=(-\Delta)^{-1}(u-v),
\end{equation}
where
\begin{equation*}
\qquad G(x):=\frac{1}{d(d-2)\omega_d} | x | ^{2-d}
\end{equation*}
is the fundamental solution of $-\Delta$ in $\R^d$ and  $\omega_d$ the volume of the unit ball in $\R^d$ (for $d\geq
3$).

In some PNP models, an extra background doping profile $C(x)$ is considered resulting in the modified potential
 $-\Delta \psi' = u -v +C$. With suitable assumptions on $C(x)$ this can be easily
eliminated replacing the external potentials $U$ by $U+ \psi_C$ and $V$ by $V-\psi_C$, where $\psi_C=(-\Delta)^{-1}C$.

There is a vast literature on well-posedness and long time behavior
of the system \eqref{eq:PNP}. We refer to \cite{MR1777308,
MR1325799, MR1259205, MR1452478} and references therein for bounded
domains, and \cite{MR1770444, MR1842428, MR1866628, MR2395521,
MR2445259} for the whole space problem. Different from these papers,
our contribution is to show that the system of equations
\eqref{eq:PNP}, governing drift, diffusion and reaction of charged
species, possesses a gradient flow structure as viewed on the metric
space of probability measures on $\R^d$ endowed with the quadratic
Wasserstein distance, in essence the weak-$\ast$ topology (see also
\cite{MR2776123, MR3076364, MR3164863} for various discussions).
Therefore, variational methods may be introduced to prove global
existence of weak solutions (see definition below). Motivation for
this in terms of energy dissipation is offered below. Also, we
provide a unified framework both for linear $m=1$ and nonlinear
$m>1$ diffusions. To the best of our knowledge well-posedness in the
whole space for $m>1$ usually requires either high integrability of
the initial data or initial gradient regularity. We would like to
stress that we need here no such hypotheses and that our result
merely requires some low initial integrability, defined in terms of
the diffusion exponent $m\geq 1$ and dimension $d\geq 3$ only (which
we think is sharp, see discussion after the proof of
Proposition~\ref{prop:weak_Euler-Lagrange}
page~\pageref{rmk:initial_integrability}). Also, we do not need
compatibility conditions between $m\geq 1$ and $d\geq 3$. This is in
contrast with the Patlak-Keller-Segel chemotaxis models
\cite{MR2383208,BCKKLL}, for which critical mass phenomena may occur
depending
 on whether $m$ is larger or smaller than the scale-invariant exponent $m_*(d)=2-\frac 2d$, see e.g.
 \cite{MR3040679} for the critical parabolic-parabolic case. This is mainly due to the fact that the self-induced drifts
  are repulsive here, while they are self-attractive in the Keller-Segel models, thus leading to aggregation and blow-up in finite time.


It was shown in \cite{MR1617171, MR1842429} that certain scalar diffusion equations
can be interpreted as gradient flows in metric spaces and the literature concerning
this issue is steadily growing (see \cite{MR2401600} and references
therein). It is thus a question of great interest to apply such ideas
to study systems of equations. In contrast, there are only a
few examples for systems. For related studies, we refer to
\cite{MR2383208, BCKKLL,MR3040679,MR2368900, MR2461815, MR3044141,Matthes_Zinsl,Zinsl}, where the energy functional
is involved with the Wasserstein metric and existence theorems using
a minimizing movement scheme for corresponding evolution problems
are presented.

In  \cite{MR1617171}, the linear Fokker-Planck
equation $$\partial_t\rho=\sigma\Delta\rho+\operatorname{div}(\rho\nabla\varphi)$$ is regarded as the gradient flow of a free energy consisting of the Boltzmann entropy with a potential  $\varphi$,
$$
\mathcal{F}(\rho)=\int_{\R^d}(\varphi \rho +\sigma \rho\log\rho)\,\mathrm{d}x,
$$
with respect to the quadratic Wasserstein metric. There may be many
Lyapunov functions associated to a differential equation. The result
of \cite{MR1617171}  means that dissipation for the free energy
$\mathcal{F}$ determines the Fokker-Planck Equation.   The same idea
was later employed in \cite{MR1842429} and, in addition, to derive
long-time asymptotics for the Porous Medium Equation (PME). Since
the system \eqref{eq:PNP} can be viewed as two Fokker-Planck
equations (when $m=1$) or Porous Medium equations (when $m>1$) in
$u,v$ coupled by means of a Poisson kernel, we are motivated to
extend these ideas to study our coupled system. Inspired by
\cite{MR1842429} and \cite{MR1964483}, we shall in fact discover
that the PNP system can be seen as a gradient flow driven by the
free energy
\begin{equation}
\label{eq:def_total_energy}
 \begin{aligned}
 & \mathcal E(u,v):= \int_{\R^d} \Big(u\log u+v\log v+u U+v V+\frac{|\nabla\psi|^2
}{2}\Big)\,\mathrm{d}x & \mbox{ if } m=1, \\
& \mathcal E(u,v):= \int_{\R^d} \Big(\frac{u^m}{m-1} + \frac{v^m}{m-1} +u U+v
V+\frac{|\nabla\psi|^2}{2}\Big)\,\mathrm{d}x & \mbox{ if } m>1.
\end{aligned}\end{equation}
We further motivate this approach informally by discussing the
relationship between the dissipation relation and the weak-$\ast$
topology in terms of the Wasserstein-Rubinstein-Kantorovich
distance, or simply the Wasserstein distance. We follow
\cite{MR3164863} and consider for illustration the case  $m=1.$ Set
\begin{equation*}
\begin{aligned}
\label{eq:intro_1}
&\varphi(u,v) = u \log u + v \log v + uU + vV + \frac{1}{2} |\nabla \psi|^2, \ \mbox{so that} \\
&\mathcal  E(u,v) = \int_{\R^d} \varphi(u,v) \mathrm{d}x.
\end{aligned}
\end{equation*}
Given a process or an evolution  $(u(t),v(t))$, during an interval  $(T,T+h)$ the change in energy is
\begin{equation}
\label{eq:intro_2}
\mathcal E(u,v)\Big|_{T+h} - \int_T^{T+h}\int_{\R^d}
\frac{\mathrm{d}}{\mathrm{d}t}\varphi(u,v) \mathrm{d}x\mathrm{d}t =
\mathcal E(u,v)\Big|_T
\end{equation}
This, \eqref{eq:intro_2}, is the dissipation equality or inequality and the density of the middle term
\begin{equation}
\label{eq:intro_3}
\mathcal D =  - \int_{\R^d} \frac{\mathrm{d}}{\mathrm{d}t}\varphi(u,v)
\mathrm{d}x
\end{equation}
is the dissipation density along the trajectory. Writing
\begin{equation}
\label{eq:intro_4}
\frac{\mathrm{d}}{\mathrm{d}t}\varphi(u,v) = \varphi_u
\frac{\mathrm{d}u}{\mathrm{d}t} + \varphi_v
\frac{\mathrm{d}v}{\mathrm{d}t},
\end{equation}
we must ascribe a meaning to
$$
  \frac{\mathrm{d}u}{\mathrm{d}t}, \frac{\mathrm{d}v}{\mathrm{d}t}
$$
to render the system dissipative, that is, so that
\eqref{eq:intro_3} is positive. To begin we calculate the terms in
\eqref{eq:intro_4}. Keeping in mind \eqref{eq:def_Newton_potential},
one checks that
$$
 \frac{\delta}{\delta u} \Big(\frac{1}{2} |\nabla \psi|^2 \Big) = \psi \ \mbox{and} \
 \frac{\delta}{\delta v} \Big(\frac{1}{2} |\nabla \psi|^2 \Big) = -\psi,
$$
which leads to
\begin{equation*}
\label{eq:intro_5}
\varphi_u (u,v) = \log u + U + \psi + 1\  \mbox{and} \ \varphi_v  = \log v + V - \psi + 1.
\end{equation*}
Let us now employ the Poisson-Nernst-Planck equations \eqref{eq:PNP}. Substituting into  \eqref{eq:intro_3}  and integrating by parts gives
$$
\mathcal D = \int_{\R^d} \left\{ \Big|\frac{\nabla u}{u} + \nabla(U + \psi)\Big|^2 u + \Big|\frac{\nabla v}{v} + \nabla(V - \psi)\Big|^2 v \right\}  \mathrm{d}x
$$
Introduce
\begin{equation}
\label{eq:intro_6}
\begin{aligned}
& w =  -\Big(\frac{\nabla u}{u} +  \nabla(U + \psi)\Big) \ \mbox{ and } \ \omega = -\Big(\frac{\nabla v}{v} + \nabla(V - \psi)\Big) \ \mbox{ so that} \\
& u_t + \mbox{div}(wu) = 0 \ \mbox{ and} \ v_t + \mbox{div}(\omega v) = 0.
\end{aligned}
\end{equation}
We have that
\begin{equation*}
\label{eq:intro_7}
\int_T^{T+h} \mathcal D \mathrm{d}t = \int_T^{T+h} \int_{\R^d} |w|^2 u
\mathrm{d}x \mathrm{d}t + \int_T^{T+h} \int_{\R^d} |\omega|^2 v
\mathrm{d}x \mathrm{d}t
\end{equation*}
where the pairs $(u,w), (v,\omega)$ satisfy the continuity equations \eqref{eq:intro_6}. This represents a trial in $d_W,$ the
quadratic Wasserstein metric, using the Benamou-Brenier formulation \cite{MR1738163}, where
\begin{equation*}
\begin{aligned}
&\frac{1}{h} d_W(u\big|_{T+h},u\big|_T)^2 = \inf \int_T^{T+h} \int_{\R^d} |w|^2u \mathrm{d}x \mathrm{d}t \ \mbox{and}
\\ &\frac{1}{h} d_W(v\big|_{T+h},v\big|_T)^2 = \inf \int_T^{T+h} \int_{\R^d} |\omega|^2v \mathrm{d}x \mathrm{d}t \\
& \mbox{ taken over all pairs} \ (u, w) \ \mbox{and} \ (v,\omega) \ \mbox{satisfying the continuity equations} \\
& \mbox{} u_t + \mbox{div}(wu) = 0 \ \mbox{ and} \ v_t + \mbox{div}(\omega v) = 0, \ \mbox{and}\\
& \mbox{the initial and terminal conditions.}
\end{aligned}
\end{equation*}
The calculation shows that the dissipation relation \eqref{eq:intro_6} for $\mathcal E$  and the PNP system
are closely related to the Wasserstein distance. We could write, in fact,
\begin{equation*}
\label{eq:intro_9}
\frac{1}{h} d_W(u\big|_{T+h},u\big|_T)^2 + \frac{1}{h}
d_W(v\big|_{T+h},v\big|_T)^2 + \mathcal E(u,v)\big|_{T+h}  \leq \mathcal E(u,v)\big|_T
\end{equation*}
suggesting an implicit scheme which leads to a gradient flow. This is nearly correct. As is well known,
a factor of $1/2$ must be inserted; see below \eqref{def:JKO_scheme}.

We now turn to the precise formulation. Denoting $\mathcal{P}(\R^d)$ the set of Borel probability measures
on $\R^d$ with finite second moments and $d_W$ the
quadratic Wasserstein distance as before, the
underlying space will be here $(u,v)\in \mathcal{P}(\R^d) \times
\mathcal{P}(\R^d)$ and will inherit a natural differential
structure from that of $(\mathcal{P}(\R^d),d_W)$ - see
section~\ref{section:wass_grad_flow_structure} for details. The
total free energy \eqref{eq:def_total_energy} is a combination of
the well-known internal (diffusive entropy) and potential energies
for each species, and, although unclear at this stage, the coupling
Dirichlet energy $\frac{1}{2}\int_{\R^d}|\nabla\psi|^2\mathrm{d}x$
falls into the category of so-called interaction energies. See
\cite{MR1964483} for an introduction.

Following \cite{MR1617171}, we shall construct weak solutions $z=(u,v)$ as follows.
Given suitable initial data $z^0=(u^0,v^0)$ and some small time step $h\in (0,1)$
we first construct a discrete sequence $\{z_h^{(n)}\}_{n\in \mathbb{N}}$ solution
to the Jordan-Kinderlehrer-Otto or JKO implicit scheme
\begin{equation}
\begin{aligned}
& z_h^{(0)}=z^0, \\
&         z_h^{(n+1)}\in \underset{\mathcal{K}\times \mathcal{K}}{\mbox{Argmin}}\left\{\frac{1}{2h}d^2\big(\cdot,z_h^{(n)}\big)+\mathcal E(\cdot)\right\}
\end{aligned}
\label{def:JKO_scheme}
\end{equation}

Here $\mathcal E(z) =\mathcal  E(u,v)$ is the total free energy
\eqref{eq:def_total_energy}, $d^2$ is the (squared) distance on the
product space inherited from $d_W$, and
$\mathcal{K}\subset \mathcal{P}$ the set of admissible minimizers
defined later on.
%
As is classical by now, one obtains interpolating solutions
$\{z_h(t)\}_{h}=\{u_h(t),z_h(t)\}_{h}$ defined for all $t\geq 0$,
piecewise constant in time, and satisfying a coupled system of two
Euler-Lagrange equations. We shall then prove that as $h \rightarrow
0$ one recovers a weak solution
$(u(t),v(t))=z(t)=\displaystyle\lim_{h\rightarrow 0} z_h(t)$ of
\eqref{eq:PNP}. There are several challenges in this program.

In handling the $\int_{\R^d}|\nabla\psi|^2\,\mathrm{d}x$ coupling
term, some intrinsic difficulties arise due both to the specific
Poisson kernel and to the nonscalar setting. First, as neither the
external potentials nor $G$ are convex the free energy $\mathcal E$ is not
displacement convex in the sense of McCann \cite{MR1451422} and we
cannot simply apply the standard procedures as in \cite{MR2401600}.
Secondly, due to the singular nature of the kernel
$G(x)=C/|x|^{d-2}$ both the existence of minimizers in
\eqref{def:JKO_scheme} and derivation of the corresponding
Euler-Lagrange equations become delicate, see in particular the
discussions in Proposition \ref{prop:Poisson_IBP} and Proposition
\ref{prop:weak_Euler-Lagrange} for details. In order to tackle this
issue we used the "flow interchange" technique that originates in
\cite{MR2581977} and was later used in \cite{MR3040679, MR3044141}
to obtain some integrability improvement and gradient regularity of
the minimizers, see Proposition \ref{prop:discrete_Lp_propagation}
and Proposition \ref{prop:discrete_gradient_np_estimate} below. The
highlight of the argument is the propagation of initial
$L^p(\mathbb{R}^d)$ regularity established in Proposition
\ref{prop:discrete_Lp_propagation}. In addition to being technically
essential here, this propagation of initial regularity allowed us to
obtain a natural $L^{\infty}(\mathbb{R}^d)$ estimate (see Theorems
\ref{theo:main_theo}-\ref{theo:main_theo_linear}), which to the best
of our knowledge was unknown for the PNP system in the whole space.
We also believe that the very same argument could be employed for
similar problems in order to show propagation of initial regularity,
which is usually a delicate point in the mass transport framework.


The rest of the paper is organized as follows. In Section
\ref{section:wass_grad_flow_structure} we recall well-known facts in
optimal transport theory and briefly describe the differential
structure of the product space. We then formally derive the
Wasserstein gradient flow structure of the system \eqref{eq:PNP} and
state the main existence results. In
Section~\ref{section:study_energy} we study the relevant energy
functionals, and establish improved regularity of their minimizers.
In Section~\ref{section:minimizing_scheme} we fix a time step $h>0$
small enough and consider the minimizing scheme. We obtain
approximate discrete solutions $\{u_h,v_h\}_{h}$ and derive the
corresponding Euler-Lagrange equations. In
Section~\ref{section:CV_to_weak_solution} we take the limit $h\to 0$
and show that the convergence $(u,v)=\lim\limits_{h\to 0}(u_h,v_h)$ is
strong enough to retrieve a weak solution. This last section also
contains the proof of the main theorems.
\subsection*{Notation Convention.}

Unless otherwise specified, $\langle \cdot, \cdot\rangle$ and  $\cdot$ denote
inner product of elements in $\mathbb{R}^d$, $\mathcal{P}$ denotes
$\mathcal{P}(\R^d)$, and $\mathcal{P}^{ac}$ denotes
$\mathcal{P}^{ac}(\R^d)$. If clear from the context we shall often
omit the subscripts $m=1$ or $m>1$. If $1\leq p\leq \infty$, we
denote by $p'=\frac{p}{p-1}$ the conjugate Lebesgue exponent. 


\section{Formal Wasserstein gradient flow}
\label{section:wass_grad_flow_structure}

From now on we assume that the external potentials are quadratic at infinity, i-e
\begin{equation}
 \label{eq:structural_potentials}
\begin{aligned}
&C_1|x|^2\leq U(x),V(x)\leq C_2(1+|x|^2)\\
& \ |\nabla U(x)|,|\nabla V(x)|\leq C_3(1+|x|)\\
& \ \|\Delta U\|_{L^{\infty}(\R^d)},\|\Delta V\|_{L^{\infty}(\R^d)}\leq C_4,
\end{aligned}
\end{equation}
for some generic positive constants $C_i$. Note that $U,V$ need not
be uniformly convex as is often assumed, so that we allow here
multiple wells. Although these assumptions on the potentials could
be weakened we assume here strict confinement $C_1>0$ for the ease
of exposition, and we do not seek optimal generality.

We also introduce the admissible set
\begin{equation}
\label{eq:def_admissible_set_K}
\mathcal{K}:=\left\{\begin{array}{ll}
 \mathcal{K}_1:=\mathcal{P}^{ac}(\R^d)\cap L^1\log L^1(\R^d)&\text{if }m=1,\\
 \mathcal{K}_m:=\mathcal{P}^{ac}(\R^d)\cap L^m(\R^d)&\text{if }m>1,
 \end{array}\right.
\end{equation}
and for reasons that shall become clear later on we shall always
consider initial data
\begin{equation}
\label{eq:initial_data}
u^0,v^0\in\mathcal{K}\cap L^{r_0}(\R^d)
\qquad \mbox{ for some }r_0>\max\{m,2d/(d+1)\}.
\end{equation}
Essentially $u^0,v^0\in L^{m}\cap L^{2d/(d+2)}(\R^d)$ ensures that the initial energy $\mathcal (u^0,v^0)$ is finite,
while $L^{2d/(d+1)}(\R^d)$ regularity will ensure that the self-induced drifts $u\nabla\Psi,v\nabla\Psi\in L^1(\R^d)$
for all times. We believe that $r=\max\{m,2d/(d+1)\}$ should be admissible, but for technical compactness issues we
 have to assume here slightly better $L^{r_0}(\R^d)$ integrability for some $r_0>r$ arbitrarily close. See the
 proof of Theorem~\ref{theo:uh->u_a.e.} later on for details.

For $\rho,u,v\geq 0$ let us define the usual Boltzmann entropy
\begin{equation*}
\label{def: Boltzmann entropy}
\mathcal{H}(\rho):=\int_{\R^d} \rho\log\rho\,\mathrm{d}x,
\end{equation*}
the diffusion energy
\begin{equation}
\label{eq:def_energy_diff}
\begin{aligned}
& \mathcal E_{\rm diff}(u,v):= \int_{\R^d}\big( u\log u + v\log v \big)\,\mathrm{d}x \  {\rm if} \ m = 1 \ {\rm and}\\
& \mathcal E_{\rm diff}(u,v):= \frac{1}{m-1}\int_{\R^d} \big( u^m+v^m \big)\,\mathrm{d}x \  {\rm if} \ m > 1
\end{aligned}
\end{equation}
and the external potential energy
\begin{equation}
\mathcal E_{\rm ext}(u,v):=\int_{\R^d} (u U+v V)\,\mathrm{d}x.
\label{eq:def_energy_external}
\end{equation}
Note that, with our assumptions, $\mathcal E_{\rm diff},\mathcal E_{\rm ext}$ are finite for all
$(u,v)\in \mathcal{K}\times \mathcal{K}$. For $\psi=(-\Delta)^{-1}(u-v)=G*(u-v)$ we define now the coupling
energy
\begin{equation}
\mathcal E_{\rm cpl}(u,v) := \frac{1}{2}\int_{\R^d}|\nabla\psi|^2\mathrm{d}x,
%
\label{eq:def_energy_coupling}
\end{equation}
which is the energy of the self-induced electric
potential. Note that, at least formally,
\begin{align}
\int_{\R^d} |\nabla\psi|^2\mathrm{d}x&=\int_{\R^d}(-\Delta
\psi)\psi\,\mathrm{d}x=\int_{\R^d}(u-v)\,G*(u-v)\,\mathrm{d}x\nonumber\\
&=\iint_{\R^d\times\R^d}[u-v](x)G(x-y)[u-v](y)\,\mathrm{d}x\mathrm{d}y
\label{eq:formal_IBP}
\end{align}
falls into the category of interaction energies $$\iint_{\R^d\times\R^d}\rho(x)K(x,y)\rho(y)\mathrm{d}x\mathrm{d}y$$ treated in \cite{MR1964483}.
To sum up, the total free energy $\mathcal E=\mathcal E_{\rm diff}+\mathcal E_{\rm ext}+\mathcal E_{\rm cpl}$ is given by
\eqref{eq:def_total_energy}



For given probability measures $\mu,\nu\in\mathcal{P}(\R^d)$ we denote the squared (quadratic) Wasserstein distance by
\begin{equation*}
\label{eq:W_def}
d_W^2(\mu,\nu)=\inf\limits_{\gamma\in\Gamma(\mu,\nu)}\iint_{\R^d\times
\R^d}|x-y|^2\mathrm{d}\gamma(x,y),
\end{equation*}
where $\Gamma(\mu,\nu)\subset \mathcal{P}(\R^d_x\times\R^d_y)$ is the set of admissible joint distributions
with $x$ and $y$ marginals $\mu,\nu$ respectively. We recall from
\cite{MR1964483} that $(\mathcal{P},d_W)$ is a metric
space and that $d_W$ metrizes the weak convergence of
measures. When $\mu\in \mathcal{P}^{ac}_2$ is moreover absolutely
continuous with respect to the Lebesgue measure $\mathrm{d}\mu(x)\ll
\mathrm{d}x$, the square Wasserstein distance can also be computed
by the Benamou-Brenier theorem \cite{MR1738163} as already noted.
\begin{theo}[Existence of optimal maps, \cite{MR1964483}]
Let $\mu\in \mathcal{P}^{ac}_2(\R^d)$ and $\nu\in
\mathcal{P}^{ac}_2(\R^d)$. There exists a unique optimal transport
map $T=\nabla\varphi\in L^2(\R^d; \mathrm{d}\mu)$ for some convex
function $\varphi$ such that
$$
\nu=T_{\#}\mu:\qquad \forall
f\in\mathcal{C}_c(\R^d),\,\int_{\R^d}f(y)\mathrm{d}\nu(y)=\int_{\R^d}f\circ
T(x)\mathrm{d}\mu(x)
$$
and
$$
d_W^2(\mu,\nu)=\int_{\R^d}|x-T(x)|^2\mathrm{d}\mu(x).
$$
\label{theo:Brenier}
\end{theo}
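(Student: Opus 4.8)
The plan is to follow the classical Brenier--McCann argument, organized in four steps: (i) produce an optimal Kantorovich plan by the direct method; (ii) show its support is cyclically monotone; (iii) invoke Rockafellar to place that support inside the subdifferential of a convex potential, and use the absolute continuity of $\mu$ to collapse the plan onto a single-valued graph; (iv) deduce uniqueness.

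For step (i) I would relax the Monge problem to the Kantorovich problem, minimizing $\gamma\mapsto\iint_{\R^d\times\R^d}|x-y|^2\,\mathrm{d}\gamma$ over the couplings $\Gamma(\mu,\nu)$. Since $\mu$ and $\nu$ have finite second moments, $\Gamma(\mu,\nu)$ is tight, hence weak-$\ast$ relatively compact by Prokhorov's theorem; the cost is weak-$\ast$ lower semicontinuous (being a supremum of integrals of bounded continuous nonnegative functions), and the marginal constraints pass to weak-$\ast$ limits, so a minimizer $\gamma_o$ exists. Writing $|x-y|^2=|x|^2-2\langle x,y\rangle+|y|^2$ and noting that the outer terms depend only on the fixed marginals, $\gamma_o$ equivalently maximizes $\iint\langle x,y\rangle\,\mathrm{d}\gamma$.

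For step (ii) a standard finite re-coupling argument shows $\mathrm{supp}\,\gamma_o$ is cyclically monotone: were there points $(x_1,y_1),\dots,(x_N,y_N)\in\mathrm{supp}\,\gamma_o$ with $\sum_{i}\langle x_i,y_{i+1}-y_i\rangle>0$ (cyclic indices), redistributing a small amount of mass on neighbourhoods of these points so as to pair $x_i$ with $y_{i+1}$ keeps the marginals fixed and strictly raises $\iint\langle x,y\rangle\,\mathrm{d}\gamma$, a contradiction. By Rockafellar's theorem every cyclically monotone set lies in the graph of $\partial\varphi$ for some proper lower semicontinuous convex $\varphi\colon\R^d\to\R\cup\{+\infty\}$ (realized explicitly as a supremum of affine functions built from the support), so $\mathrm{supp}\,\gamma_o\subset\partial\varphi$.

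For step (iii), the crux, since $\varphi$ is convex it is differentiable off a Lebesgue-null set, which is $\mu$-null because $\mu\ll\mathcal{L}^d$; hence for $\mu$-a.e.\ $x$ the fiber $\{y:(x,y)\in\partial\varphi\}$ is the singleton $\{\nabla\varphi(x)\}$. Disintegrating $\gamma_o$ over its first marginal $\mu$ then forces $\gamma_o=(\mathrm{id},\nabla\varphi)_\#\mu$, so $T:=\nabla\varphi$ transports $\mu$ to $\nu$, and $d_W^2(\mu,\nu)=\iint|x-y|^2\,\mathrm{d}\gamma_o=\int_{\R^d}|x-T(x)|^2\,\mathrm{d}\mu$; moreover $\int_{\R^d}|T|^2\,\mathrm{d}\mu=\int_{\R^d}|y|^2\,\mathrm{d}\nu<\infty$, so $T\in L^2(\R^d;\mathrm{d}\mu)$. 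For uniqueness (step (iv)), if $T_1=\nabla\varphi_1$ and $T_2=\nabla\varphi_2$ are two optimal maps with associated graph plans $\gamma_1,\gamma_2$, then $\tfrac12(\gamma_1+\gamma_2)$ is again optimal, hence concentrated on $\partial\psi$ for a single convex $\psi$; intersecting with the two graphs yields $\nabla\varphi_1=\nabla\varphi_2$ $\mu$-a.e. I expect step (iii) to be the real obstacle: it is precisely there that the hypothesis $\mu\in\mathcal{P}^{ac}_2$ is used, to pass from a cyclically monotone (a priori multi-valued) support to an honest single-valued transport map; everything else is either soft functional analysis (compactness, lower semicontinuity) or quoted from convex analysis.
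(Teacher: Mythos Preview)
The paper does not prove this theorem at all: it is stated with an explicit citation to \cite{MR1964483} (Villani's book) and used as a black box, with no proof environment following the statement. Your outline is a correct sketch of the classical Brenier--McCann argument that one finds in that reference, so there is nothing to compare against in the paper itself.
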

\noindent
Our interest here is a system so we endow $\mathcal{P}(\R^d)\times\mathcal{P}(\R^d)$ with the natural product distance
$$
d^2(z,z')=d_W^2(u,u')+d_W^2(v,v')
$$
for all $z=(u,v)$ and $z'=(u',v')$ in $\mathcal{P}\times
\mathcal{P}$. It is well known \cite{MR1842429, MR1964483} that
$(\mathcal{P},d_W)$ enjoys a natural differential
structure defined by means of continuity equations, so that our
product space also has the same differential structure.
This permits us to differentiate real-valued functions $\mathcal F$ on the
product space, and defines the corresponding Wasserstein gradient by
the chain rule
$\frac{\mathrm{d}}{\mathrm{d}t}\mathcal F(z_t)=\operatorname{grad}_W
\mathcal F(z_t).\frac{\mathrm{d}z_t}{\mathrm{d}t}$. We show now that
\eqref{eq:PNP} is really the gradient flow
\begin{equation*}
 \frac{\mathrm{d}z}{\mathrm{d}t}=-\operatorname{grad}_W \mathcal E(z),\qquad z(t)=\Big(\begin{array}{c}
        u(t)\\
        v(t)
           \end{array}
\Big).
\label{eq:PNP_gradient_flow_wass}
\end{equation*}
In terms of ordinary calculus of variations, we recall that this is achieved by variation of domain or, in fluid dynamical terms,
by Lagrangian variations, \cite{MR1617171}.
To this end,
let us split for convenience the coupling Poisson equation $-\Delta
\psi=u-v$ as
\begin{equation*}
\psi=\psi_u-\psi_v\qquad\text{with} \quad \left\{
\begin{array}{ccc}
-\Delta \psi_u=u & \Leftrightarrow & \psi_u=G*u,\\
-\Delta \psi_v=v& \Leftrightarrow & \psi_v=G*v.
\end{array}
\right.
\label{eq:coupling_poisson_decomposed}
\end{equation*}
Formally differentiating the diffusive energy \eqref{eq:def_energy_diff} with respect to $u$
(resp. $v$), a by now classical computation \cite{MR1842429, MR1964483} leads to the $\Delta u^m$
(resp. $\Delta v^m$) term  in \eqref{eq:PNP}. Similarly, differentiating
the external energy \eqref{eq:def_energy_external} with respect to
$u$ and $v$ classically gives rise to $\nabla\cdot(u\nabla U)$ and
$\nabla\cdot(v\nabla V)$ in \eqref{eq:PNP}. In order to
differentiate the coupling term we first use the formal integration
by parts \eqref{eq:formal_IBP} and then exploit the symmetry
$G(x-y)=G(y-x)$ to expand
\begin{align*}
\mathcal E_{\rm cpl}(u,v) & =\frac{1}{2}\iint_{\R^d\times\R^d}[u-v](x)G(x-y)[u-v](y)\, \mathrm{d}x\,\mathrm{d}y\\
& =\frac{1}{2}\int_{\R^d}
\big[u(G*u)+v(G*v)\big]\mathrm{d}x-\iint_{\R^d\times\R^d}u(x)G(x-y)v(y)\mathrm{d}x\,\mathrm{d}y.
\end{align*}
Differentiating with respect to $u$, it is well known
\cite{MR1964483} that the first integral gives the corresponding
$\nabla\cdot\left(u\nabla (G*u)\right)+0=\nabla\cdot\left(u\nabla
\psi_u\right)$ term. Rewriting the remaining cross term
$$
  -\iint_{\R^d\times\R^d} uGv\,\mathrm{d}x=-\int_{\R^d} u(G*v)\,\mathrm{d}x=-\int_{\R^d}
  u\psi_v\,\mathrm{d}x,
$$
and noting that $\psi_v$ is independent of $u$, it is again well
known that this term gives rise to $-\nabla\cdot\left(u\nabla
\psi_v\right)$. Summing up we obtain
$\nabla\cdot\left(u\nabla\left(\psi_u-\psi_v\right)\right)=\nabla\cdot\left(u\nabla\psi\right)$
 as in the first equation of \eqref{eq:PNP}. Similarly differentiating with respect to
 $v$ we obtain the $-\nabla\cdot\left(v\nabla\psi\right)$ term appearing in the second component.

Though very general notions of solutions related to Energy
Dissipation Equality (EDE) or Evolution Variational Inequality (EVI)
can be used for abstract gradient flows in metric spaces,
\cite{MR3050280, MR2401600}, we use the more direct framework, introducing some features later for the implementation of the flow-interchange method.
\begin{defi}
A pair $u, v: (0,\infty)\times\R^d\rightarrow\R^+$ is a global weak
solution if $u,v\in \mathcal{C}([0,\infty);\mathcal{P})$,
$u(t),v(t)\to u^0,v^0$ in $(\mathcal{P},d_W)$ as $t\searrow 0$,
$\nabla u^m$, $\nabla v^m$, $u\nabla U$, $v\nabla V$, $u\nabla\psi$
and $v\nabla\psi\in L^2(0,T;L^1(\R^d))$ for all $T>0$, and for any
fixed $\varphi\in \mathcal{C}^{\infty}_c(\R^d)$
\begin{align}
\frac{\mathrm{d}}{\mathrm{d}t}\int_{\R^d}
u(t,x)\varphi\,\mathrm{d}x&=-\int_{\R^d} \langle \nabla
u^m(t,x),\nabla\varphi\rangle\,\mathrm{d}x -\int_{\R^d}
u(t,x)\langle\nabla
U,\nabla\varphi\rangle\,\mathrm{d}x\nonumber\\
&\quad\;-\int_{\R^d}
u(t,x)\langle\nabla\psi(t,x),\nabla\varphi\rangle\,\mathrm{d}x,
 \label{eq:def_weak_formulation_u}
\end{align}
and
\begin{align}
\frac{\mathrm{d}}{\mathrm{d}t}\int_{\R^d}{v}(t,x)\varphi\,\mathrm{d}x&=-\int_{\R^d}\langle\nabla
v^m(t,x),\nabla\varphi\rangle\,\mathrm{d}x -\int_{\R^d}
v(t,x)\langle\nabla
V,\nabla\varphi\rangle\,\mathrm{d}x\nonumber\\
&\quad\;+\int_{\R^d}
v(t,x)\langle\nabla\psi(t,x),\nabla\varphi\rangle\,\mathrm{d}x
 \label{eq:def_weak_formulation_v}
\end{align}
hold in the sense of distributions $\mathcal{D}'(0,\infty)$ with $\psi(t,x)=G*[u-v](t,x)$ a.e. $(t,x)\in (0,\infty)\times\R^d$.
\label{def:weak_solutions}
\end{defi}
We observe that $L^2_{loc}([0,\infty);L^1(\R^d))$ could be replaced
by $L^1_{loc}((0,\infty)\times\R^d)$ in the above definition, which is
enough for all the integrals in \eqref{eq:def_weak_formulation_u}-\eqref{eq:def_weak_formulation_v}
to make sense. In any case the weak solutions constructed here would still enjoy strong regularity
in the end, and our choice of including the regularity in the definition of weak solutions is purely practical.

In this setting our main result is
\begin{theo}[Existence of solutions for $m>1$]
Fix $m>1$ and initial data $u^0,v^0$ as in \eqref{eq:initial_data}.
Then there exists a global weak solution $(u,v)$ with
\begin{equation}
\label{eq:regularity_weak_sol}
\begin{aligned}
& u,v\in
L^{\infty}\big(0,\infty; L^m(\R^d)\cap L^1(\R^d,(1+|x|^2)\mathrm{d}x)\big)\\
& \nabla\psi\in L^{\infty}(0,\infty;L^2(\R^d))\\
& u^{\frac{m}{2}}, v^{\frac{m}{2}}\in L^2(0, T; H^1(\R^d)) \\
& u,v\in L^{\infty}(0,T;L^p(\R^d)), \ \forall p\in [1,2d/(d+1)],
\end{aligned}
\end{equation}
%
%
for all $T>0$, and
\begin{equation}
\label{eq:energy_monotonicity} \mathcal E(u(t),v(t))\leq  \mathcal
E(u^0,v^0)\quad \text{for a.e. }t\geq 0.
\end{equation}
If we further assume that $u^0,v^0\in L^{p}(\R^d)$ for some
$p\in[1,\infty]$, then $\forall\tau\geq 0$,
\begin{equation}
\sup\limits_{t\in[0,\tau]}\left(\|u(t)\|_{L^p(\R^d)}+\|v(t)\|_{L^p(\R^d)}\right) \leq
Ce^{\lambda\tau}\left(\|u^0\|_{L^{p}(\R^d)}+\|v^0\|_{L^{p}(\R^d)}\right),
\label{eq:Lp_Linfty_estimate}
\end{equation}
with
\begin{equation}
\label{def of lambda}
 \lambda=\max\big\{\|\Delta U\|_{L^{\infty}(\R^d)},\|\Delta V\|_{L^{\infty}(\R^d)}\big\}.
\end{equation}
\label{theo:main_theo}
\end{theo}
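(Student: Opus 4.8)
The plan is to run the minimizing--movements (JKO) scheme \eqref{def:JKO_scheme}, extract uniform estimates (both the classical ones and the improved integrability/gradient bounds coming from flow interchange), derive the discrete Euler--Lagrange equations, and pass to the limit $h\to 0$. First I would establish well-posedness of the scheme: for $h$ small each minimization in \eqref{def:JKO_scheme} admits a solution $z_h^{(n+1)}$ by the direct method --- $\mathcal E_{\rm diff}+\mathcal E_{\rm ext}$ is weak-$*$ lower semicontinuous, the confinement $C_1|x|^2\le U,V$ in \eqref{eq:structural_potentials} controls the second moment and hence gives narrow compactness of minimizing sequences, and the coupling term $\mathcal E_{\rm cpl}=\tfrac12\|\nabla\psi\|_{L^2}^2\ge 0$ is lower semicontinuous, the last point being the one that must be checked carefully because of the singular kernel (Proposition~\ref{prop:Poisson_IBP}). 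Comparing $z_h^{(n+1)}$ with $z_h^{(n)}$ in \eqref{def:JKO_scheme} then yields the standard bounds uniform in $h,n$: energy decrease $\mathcal E(z_h^{(n)})\le\mathcal E(z^0)$, the total square-distance estimate $\sum_n\frac1{2h}d^2(z_h^{(n+1)},z_h^{(n)})\le\mathcal E(z^0)-\inf\mathcal E<\infty$, and --- since every term of $\mathcal E$ but $\mathcal E_{\rm diff}$ is nonnegative --- a uniform second moment bound, a bound on $\int(u_h^m+v_h^m)$, and a bound on $\|\nabla\psi_h\|_{L^2}$. The total square-distance bound gives the usual $d_W$-equicontinuity of the piecewise-constant interpolants $u_h(t),v_h(t)$.

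Next comes the regularity upgrade by flow interchange, following \cite{MR2581977,MR3040679,MR3044141}: one tests the minimality of $z_h^{(n+1)}$ against an auxiliary Wasserstein gradient flow $(\mathcal S_s)$ of some functional $\mathcal V$, obtaining $\mathcal V(z_h^{(n+1)})-\mathcal V(z_h^{(n)})\le h\,\mathcal D^{\mathcal V}\mathcal E(z_h^{(n+1)})$ where $\mathcal D^{\mathcal V}\mathcal E$ is the dissipation of $\mathcal E$ along the $\mathcal V$-flow. Taking $\mathcal V=\mathcal E$ with the porous-medium flow gives the diffusive gradient estimate $\sum_n h\,\|\nabla(u_h^{(n)})^{m/2}\|_{L^2}^2\le C$; taking $\mathcal V=\mathcal S_p:=\frac1{p-1}\int(\cdot)^p$ with its porous-medium-type flow, controlling the potential contribution by $\lambda\,\mathcal S_p$ (with $\lambda$ as in \eqref{def of lambda}) and checking that the coupling contribution carries the favorable repulsive sign or is absorbed, yields the discrete Gronwall inequality $\|u_h^{(n+1)}\|_{L^p}\le e^{\lambda h}\|u_h^{(n)}\|_{L^p}$, i.e.\ propagation of initial $L^p$ integrability, with $p\to\infty$ giving the $L^\infty$ bound. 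These are Propositions~\ref{prop:discrete_Lp_propagation}--\ref{prop:discrete_gradient_np_estimate}, and the propagated $L^{2d/(d+1)}$ integrability is what will later put the drifts $u\nabla\psi,v\nabla\psi$ in $L^1$.

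I would then derive the discrete Euler--Lagrange equations by Lagrangian (domain) variations: perturbing $z_h^{(n+1)}$ by the flow of a smooth compactly supported field $\nabla\varphi$, the first variation of $\frac1{2h}d^2(\cdot,z_h^{(n)})$ produces $\frac1h\int(u_h^{(n+1)}-u_h^{(n)})\varphi$ up to an $O(h)$ error, while the first variations of $\mathcal E_{\rm diff},\mathcal E_{\rm ext},\mathcal E_{\rm cpl}$ produce the three terms on the right of \eqref{eq:def_weak_formulation_u} (the coupling term after the integration by parts \eqref{eq:formal_IBP} and the expansion used in Section~\ref{section:wass_grad_flow_structure}); this is Proposition~\ref{prop:weak_Euler-Lagrange}. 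For the limit $h\to0$: $d_W$-equicontinuity plus narrow compactness give, along a subsequence, $u_h\to u$, $v_h\to v$ in $C([0,T];(\mathcal P,d_W))$; combining the uniform $L^\infty_tL^m_x$ bound with $u_h^{m/2}\in L^2_tH^1_x$ yields strong $L^1_{loc}$ and a.e.\ convergence via an Aubin--Lions-type argument (Theorem~\ref{theo:uh->u_a.e.}), so $u_h^m\to u^m$; and since $u_h-v_h\to u-v$ with the $L^{r_0}$ bounds, $\nabla\psi_h\to\nabla\psi$ strongly in $L^2_{loc}$ by regularity of $(-\Delta)^{-1}$, hence $u_h\nabla\psi_h\to u\nabla\psi$. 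Passing to the limit in the discrete Euler--Lagrange equations gives \eqref{eq:def_weak_formulation_u}--\eqref{eq:def_weak_formulation_v}; \eqref{eq:regularity_weak_sol} and \eqref{eq:energy_monotonicity} follow from weak/weak-$*$ lower semicontinuity, and \eqref{eq:Lp_Linfty_estimate} by passing to the limit in the discrete Gronwall estimate of the previous step.

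I expect the main obstacle to be the singular Poisson coupling. It makes $\mathcal E$ non--displacement--convex, so the standard displacement-convexity machinery of \cite{MR2401600} is unavailable; it complicates both the existence of minimizers and the rigorous derivation of the Euler--Lagrange equations (one must place $u\nabla\psi$ in $L^1$, whence the need for the sharp $L^{2d/(d+1)}$ integrability, and justify \eqref{eq:formal_IBP}); and it is the term that must be handled most delicately inside the flow-interchange computation, where one has to bound the dissipation of $\mathcal E_{\rm cpl}$ along the auxiliary porous-medium flow and exploit the repulsive sign so that it does not destroy the $L^p$ propagation. In short, the flow-interchange step is the technical heart of the argument, and the coupling term is the recurring difficulty throughout.
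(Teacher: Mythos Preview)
Your outline is essentially the paper's own strategy, and the order of the steps (existence of minimizers, discrete energy estimates, flow interchange for improved regularity, Euler--Lagrange equations, compactness and passage to the limit) matches. Two points are mislabeled, however, and are worth correcting.

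First, the auxiliary functional for the gradient estimate is wrong. You write ``Taking $\mathcal V=\mathcal E$ with the porous-medium flow gives the diffusive gradient estimate''. This is internally inconsistent (the gradient flow of $\mathcal E$ is PNP, not PME) and in any case not what is done. The paper takes $\mathcal V=\mathcal H$, the Boltzmann entropy, so the auxiliary flow is the \emph{heat equation}; the dissipation of $\mathcal E_{\rm diff}=\frac{1}{m-1}\int u^m$ along the heat flow is exactly $-\frac{4}{m}\int|\nabla u^{m/2}|^2$, which after summation gives $\sum_n h\|\nabla (u_h^{(n)})^{m/2}\|_{L^2}^2\le C$. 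Your choice for the $L^p$ propagation ($\mathcal V=\mathcal S_p$, auxiliary flow $\partial_t\tilde u=\Delta\tilde u^p$) is correct, but note that the resulting estimate is \emph{coupled}: one obtains
\[
\|u_h^{(n+1)}\|_{L^p}^p+\|v_h^{(n+1)}\|_{L^p}^p\le \frac{1}{1-\lambda(p-1)h}\big(\|u_h^{(n)}\|_{L^p}^p+\|v_h^{(n)}\|_{L^p}^p\big),
\]
not a decoupled bound on each component, because the coupling dissipation $-\int(\tilde u-\tilde v)(\tilde u^p-\tilde v^p)\le 0$ only has the good sign when both species are perturbed simultaneously.

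Second, the sentence ``with $p\to\infty$ giving the $L^\infty$ bound'' is too fast. At the discrete level the flow-interchange estimate requires $h<h_0(p)=\frac{1}{\lambda(p-1)}$, so one \emph{cannot} send $p\to\infty$ for fixed $h$. The $L^\infty$ estimate \eqref{eq:Lp_Linfty_estimate} is obtained only \emph{after} passing to the limit $h\to 0$: for each finite $p$ the discrete bound survives the limit, and then one lets $p\to\infty$ in the continuous estimate. This is a genuine order-of-limits issue, not a cosmetic one.
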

In the case of linear diffusion we have similarly
\begin{theo}[Existence of solutions for $m=1$]
The conclusions of Theorem~\ref{theo:main_theo} hold for $m=1$
if we replace $u,v\in L^{\infty}(0,\infty;L^m(\R^d))$ in \eqref{eq:regularity_weak_sol} by \\  $u,v\in
L^{\infty}(0,\infty;L^1\log L^1(\R^d))$.
 \label{theo:main_theo_linear}
\end{theo}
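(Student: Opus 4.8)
\medskip
\noindent\emph{Strategy.} The plan is to prove Theorem~\ref{theo:main_theo_linear} by transcribing, almost verbatim, the construction behind Theorem~\ref{theo:main_theo}: the exponent $m$ enters essentially only through the diffusive energy, which is now $\mathcal E_{\rm diff}(u,v)=\mathcal H(u)+\mathcal H(v)$. Concretely, I would run the JKO scheme~\eqref{def:JKO_scheme} on $\mathcal K_1\times\mathcal K_1$ with $\mathcal E=\mathcal E_{\rm diff}+\mathcal E_{\rm ext}+\mathcal E_{\rm cpl}$. The one structural novelty with respect to $m>1$ is that the internal energy density $s\mapsto s\log s$ changes sign, so the first thing to record is the standard lower bound
$$
\mathcal H(\rho)\ \geq\ -C_d-\tfrac12\int_{\R^d}|x|^2\,\rho\,\mathrm dx ,
$$
obtained by comparing $\rho$ with the Gaussian $e^{-|x|^2/2}$ via Jensen's inequality. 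Together with the confinement $C_1|x|^2\leq U,V$ in~\eqref{eq:structural_potentials} and $\mathcal E_{\rm cpl}\geq0$, this makes $\mathcal E$ bounded below on sublevels of the second moment, which is exactly what is needed for existence of minimizers in~\eqref{def:JKO_scheme} and for the usual a priori estimates: $\sum_n d^2(z_h^{(n+1)},z_h^{(n)})\lesssim h\,\mathcal E(z^0)+C$, uniform-in-$h$ control of the second moments of $u_h(t),v_h(t)$, and hence a uniform bound on $\mathcal H(u_h(t))+\mathcal H(v_h(t))$; since $s\log s$ is superlinear this yields, by de la Vall\'ee-Poussin, uniform integrability of $\{u_h\log u_h\}$, $\{v_h\log v_h\}$ on bounded time intervals.

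\medskip
The flow-interchange estimates of Section~\ref{section:study_energy} then carry over once one reads $u^{m/2}$ as $\sqrt u$ and $\nabla u^m$ as $\nabla u$: with the same auxiliary flow used in Proposition~\ref{prop:discrete_Lp_propagation} and Proposition~\ref{prop:discrete_gradient_np_estimate} (the linear heat / Fokker--Planck semigroup), the dissipation of $\mathcal H$ along it is the Fisher information $\int_{\R^d}|\nabla\rho|^2/\rho=4\int_{\R^d}|\nabla\sqrt\rho|^2$, which is precisely the $m=1$ analogue of $\int|\nabla u^{m/2}|^2$. I therefore expect to get verbatim the propagation of $L^p$ bounds --- hence the $L^\infty$ bound and~\eqref{eq:Lp_Linfty_estimate} at the discrete level, with the same $\lambda$ of~\eqref{def of lambda} --- and the uniform space-time bound $\sqrt{u_h},\sqrt{v_h}\in L^2(0,T;H^1(\R^d))$. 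The singular Poisson coupling is handled exactly as in Proposition~\ref{prop:Poisson_IBP}: the propagated $L^{2d/(d+1)}$ integrability gives $u_h\nabla\psi_h,\,v_h\nabla\psi_h\in L^1(\R^d)$ uniformly, while $\nabla\psi_h$ stays uniformly bounded in $L^\infty(0,\infty;L^2(\R^d))$.

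\medskip
Next I would derive the discrete Euler--Lagrange system as in Proposition~\ref{prop:weak_Euler-Lagrange}: the displacement convexity (McCann) of $\mathcal H$ that enters that derivation is classical and in fact simpler than for $m>1$, and the first variation of $\mathcal H$ along $T_\eps=\mathrm{id}+\eps\zeta$ produces the linear diffusion term, the potential and coupling variations being unchanged. Finally I would pass to the limit $h\to0$ following Section~\ref{section:CV_to_weak_solution} with no modification: the uniform bounds give, along a subsequence, strong $L^1(\R^d)$ and a.e.\ convergence $u_h\to u$, $v_h\to v$ on bounded time intervals, weak convergence of the fluxes $\nabla u_h\rightharpoonup\nabla u$, $u_h\nabla U\rightharpoonup u\nabla U$ and $u_h\nabla\psi_h\rightharpoonup u\nabla\psi$ (the last using the propagated $L^{2d/(d+1)}$ bound and continuity of $\rho\mapsto G*\rho$), whence~\eqref{eq:def_weak_formulation_u}--\eqref{eq:def_weak_formulation_v}; the energy inequality~\eqref{eq:energy_monotonicity} and~\eqref{eq:Lp_Linfty_estimate} follow from their discrete analogues by lower semicontinuity (for the entropy, Fatou on the positive part plus the uniform integrability above).

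\medskip
The one genuinely new point --- the step I expect to need a little care rather than being a pure transcription --- is exactly the sign-indefiniteness of the entropy: one must check that every compactness and lower-semicontinuity argument that was automatic for $m>1$, where $u^m/(m-1)\geq0$, still applies with a sign-changing internal energy. As indicated above, this is ensured throughout by the uniform control of the second moments together with the propagated higher integrability, which upgrade the bare $L^1\log L^1$ bound into uniform integrability of $\{u_h\log u_h\},\{v_h\log v_h\}$ on bounded time intervals; everything else in the proof of Theorem~\ref{theo:main_theo} transfers to $m=1$ unchanged.
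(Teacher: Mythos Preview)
Your proposal is correct and follows the paper's own approach: the paper simply declares that the proof of Theorem~\ref{theo:main_theo_linear} is identical to that of Theorem~\ref{theo:main_theo}, and indeed all the intermediate results in Sections~\ref{section:study_energy}--\ref{section:CV_to_weak_solution} are already stated and proved uniformly for $m\geq 1$, with the $m=1$ case singled out exactly where needed (the Carleman-type lower bound in Proposition~\ref{prop:lower_bound_energy+equiintegrability}, the simpler compactness space $X_1$ in Section~\ref{section:CV_to_weak_solution}). One small correction to your description: the auxiliary flow in Proposition~\ref{prop:discrete_Lp_propagation} is the porous-medium flow $\partial_t\tilde u=\Delta\tilde u^p$ with exponent $p$ (the $L^p$ exponent being propagated), \emph{not} the heat/Fokker--Planck semigroup; the heat flow is used only in Proposition~\ref{prop:discrete_gradient_np_estimate} for the gradient (Fisher-information) estimate, and it is precisely the dissipation of $\mathcal H$ along the $p$-PME that the paper must control in Step~1 of Proposition~\ref{prop:discrete_Lp_propagation} for $m=1$.
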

\noindent We would like to stress again that estimate
\eqref{eq:Lp_Linfty_estimate} holds for $p=\infty$, and was not
known in the whole space as far as we can tell. Since we interpret
\eqref{eq:PNP} as a gradient flow one could expect energy
monotonicity $\mathcal E(t)\downarrow$. This would immediately
follow from \eqref{eq:energy_monotonicity} and uniqueness of
solutions. Unfortunately due to the lack of regularity and
displacement convexity we were not able to prove uniqueness within
the above class of weak solutions, and therefore we only retrieve an
energy upper bound.


It is worth mentioning that the gradient flow structure of the PNP
system and the above theorems are also valid in the bounded domain
case, with some mild assumptions on the boundary and minor
modifications of the proofs. Suppose $\Omega \subset \mathbb{R}^d$
is a smooth bounded and convex domain, and consider the physically
relevant boundary condition, that is, the no flux boundary condition
$$
  \frac{\partial{u}}{\partial{\nu}}=\frac{\partial{v}}{\partial\nu}=\frac{\partial\psi}{\partial\nu}=0
  \quad\mbox{on  } \partial\Omega, \quad
  \int_{\partial\Omega}\psi\,\mathrm{d}x=0,
$$
where $\nu$ is the unit outward normal on $\partial\Omega$. We
also assume the external potentials satisfy
$$
  \frac{\partial{U}}{\partial{\nu}}=\frac{\partial{V}}{\partial\nu}=0
  \quad\mbox{on  } \partial\Omega
$$
By \cite{MR1282720} we know that the electrostatic potential can be represented as
$$
  \psi(x)=\int_{\Omega}N(x, y)(u-v)(y)\,\mathrm{d}y, \qquad \forall x \in
  \Omega.
$$
Here the (singular) kernel $N(x,y)=N(y,x)$ serves as a counterpart of the Green's function $G(x-y)$ in
$\mathbb{R}^d$ for the Newton potential. Then we may argue in a similar way that the
PNP system formally possesses the gradient flow structure, and the
existence theorems can be proved in a similar but somewhat technically easier
manner.


\section{Study of the energy functionals}
\label{section:study_energy}
%
In this section we study various properties of the two relevant
energy functionals, namely the total free energy $\mathcal E$ and the
functional \eqref{def: energy functional in JKO scheme} used in the
JKO minimizing scheme. As already mentioned in the introduction, we
use the flow interchange technique to establish some improved
regularity for the minimizers, which will turn out to be crucial in
the next sections.

For further use we recall here a particular case of the celebrated Hardy-Littlewood-Sobolev (HLS) inequality:
\begin{lem}[Hardy-Littlewood-Sobolev, \cite{MR1817225,MR0290095}] In dimension $d\geq 3$ let $w\in L^p(\R^d)$ and $\Phi=G*w$. Then
\begin{enumerate}
 \item
 If $1<p<d/2$ there is a $C=C(p,d)$ such that
 \begin{equation}
 \|\Phi\|_{L^{\frac{dp}{d-2p}}(\R^d)}\leq C \|w\|_{L^p(\R^d)},
 \tag{HLS-1}
 \label{eq:HLS_G_p}
 \end{equation}
while if $p=1$ there is a $C=C(d)$ such that
 \begin{equation}
 \|\Phi\|_{L_w^{\frac{d}{d-2}}(\R^d)}\leq C \|w\|_{L^1(\R^d)}.
 \tag{HLS-2}
 \label{eq:HLS_G_1}
 \end{equation}
 \item
 If $1<p<d$ there is a $C= C_{p,d}$ such that
 \begin{equation}
 \|\nabla\Phi\|_{L^{\frac{dp}{d-p}}(\R^d)}\leq C\|w\|_{L^p(\R^d)}.
 \tag{HLS-3}
 \label{eq:HLS_nablaG_p}
 \end{equation}
\end{enumerate}
\label{lem:HLS}
\end{lem}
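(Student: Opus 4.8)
The plan is to recognize the three inequalities as special cases of the classical fractional-integration (Riesz-potential) estimates. Since $G(x)=c_d|x|^{2-d}$, the potential $\Phi=G*w$ equals, up to the constant $c_d$, the Riesz potential $I_2w$ of order $2$; and since $\nabla G(x)=(2-d)c_d|x|^{-d}x$ obeys the pointwise bound $|\nabla G(x)|\le C_d|x|^{1-d}$ (with $\nabla G\in L^1_{\mathrm{loc}}$, so that $\nabla\Phi=(\nabla G)*w$ is genuinely the distributional gradient), the gradient $\nabla\Phi$ is controlled pointwise by the Riesz potential $I_1|w|$ of order $1$. Thus I would reduce everything to the single statement: for $0<\alpha<d$ and $1<p<d/\alpha$ one has $\|I_\alpha f\|_{L^q(\R^d)}\le C\|f\|_{L^p(\R^d)}$ with $\tfrac1q=\tfrac1p-\tfrac\alpha d$. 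Taking $\alpha=2$ gives $q=\tfrac{dp}{d-2p}$ on the range $1<p<d/2$, which is \eqref{eq:HLS_G_p}; taking $\alpha=1$ gives $q=\tfrac{dp}{d-p}$ on the range $1<p<d$, which is \eqref{eq:HLS_nablaG_p}. A one-line scaling check $f(x)\mapsto f(\lambda x)$ shows these exponent relations are forced.

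For the core estimate I would use Hedberg's splitting argument rather than the Fourier transform. Fix $x$ and a radius $R>0$, and split $I_\alpha f(x)$ into the contributions of $\{|y|<R\}$ and $\{|y|\ge R\}$. After a dyadic decomposition of the annuli, the first piece is bounded by $C\,R^\alpha\,Mf(x)$, where $M$ is the Hardy--Littlewood maximal operator; the second piece is bounded, by H\"older's inequality, by $C\,R^{\alpha-d/p}\|f\|_{L^p}$, the tail integral $\int_{|y|\ge R}|y|^{(\alpha-d)p'}\,\mathrm{d}y$ converging precisely because $p<d/\alpha$. Optimizing over $R$ yields the pointwise bound $|I_\alpha f(x)|\le C\,(Mf(x))^{1-\alpha p/d}\,\|f\|_{L^p}^{\alpha p/d}$; raising to the power $q$, integrating, and invoking the maximal inequality $\|Mf\|_{L^p}\le C\|f\|_{L^p}$ (here $p>1$ is used) gives $\|I_\alpha f\|_{L^q}\le C\|f\|_{L^p}$. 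Equivalently one may simply quote \cite{MR1817225} or \cite{MR0290095}.

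The endpoint \eqref{eq:HLS_G_1} is only a weak-type statement, and this is what I expect to be the genuinely delicate point — one really cannot do better. Here I would take the same splitting with $\alpha=2$, choose $R$ so that the $L^1$-tail term $CR^{2-d}\|f\|_{L^1}$ equals $\lambda/2$, and apply the weak-$(1,1)$ bound for $M$ on the level set $\{Mf>\lambda/(2CR^2)\}$; this yields $|\{|I_2f|>\lambda\}|\le C(\|f\|_{L^1}/\lambda)^{d/(d-2)}$, i.e.\ $\|\Phi\|_{L_w^{d/(d-2)}}\le C\|w\|_{L^1}$. One could then also recover the strong-type estimates for $1<p<d/2$ by Marcinkiewicz interpolation between the two weak endpoints, which is the route of \cite{MR0290095}. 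Apart from this endpoint bookkeeping and the care needed to pin down the exact range of $p$ on which the tail integral converges, no real obstruction arises, so invoking the cited references is reasonable.
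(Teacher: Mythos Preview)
Your proposal is correct and follows exactly the paper's approach: the paper's proof is simply the observation that since $|G(x)|=C|x|^{2-d}$ and $|\nabla G(x)|=C|x|^{1-d}$, the three inequalities are particular cases of the Riesz-potential bounds for $I_\alpha f=|\cdot|^{\alpha-d}*f$ with $\alpha=2,1$, with a reference to \cite{MR1817225,MR0290095}. You go further than the paper by sketching Hedberg's splitting argument and the weak-$(1,1)$ endpoint, which is fine but not required here.
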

\noindent Since $|G(x)|=\frac{C}{|x|^{d-2}}$ and $|\nabla G(x)|=\frac{C}{|x|^{d-1}}$ this is a
 particular case of well known fractional integration results for the Riesz potential
 $I_{\alpha}f=\frac{1}{|.|^{d-\alpha}}*f$ with $\alpha=2,1$, and we refer to
 \cite{MR1817225,MR0290095} for details. Here $L^q_w(\R^d)$ denotes the weak-$L^q$
 space and coincides with the usual Lorentz space $L^{q,\infty}(\R^d)$.

As an immediate consequence we have the following integration by parts formula:
\begin{prop}
\label{prop:Poisson_IBP}
Let $d\geq 3$ and $w\in L^{2d/(d+2)}(\R^d)$. Then
$\Phi=(-\Delta)^{-1}w=G*w$ satisfies $\Phi\in L^{2d/(d-2)}(\R^d)$,
$\nabla \Phi\in L^2(\R^d)$, and
\begin{equation}
\int_{\R^d}|\nabla \Phi|^2\mathrm{d}x=\int_{\R^d}\Phi w\,\mathrm{d}x
=\iint_{\R^d\times\R^d} w(x)G(x-y)w(y)\mathrm{d}x\,\mathrm{d}y.
\label{eq:Poisson_IBP}
\end{equation}
\end{prop}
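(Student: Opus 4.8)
The plan is to extract everything from the Hardy--Littlewood--Sobolev inequality of Lemma~\ref{lem:HLS}, getting the two integrability statements by a direct application and the identity \eqref{eq:Poisson_IBP} by approximation. First I would record the exponent arithmetic: with $p=\frac{2d}{d+2}$ one checks that $1<p<\frac d2$ and $1<p<d$ for every $d\geq 3$, and that $\frac{dp}{d-2p}=\frac{2d}{d-2}$ while $\frac{dp}{d-p}=2$. Thus \eqref{eq:HLS_G_p} shows that $\Phi=G*w$ is well defined with $\Phi\in L^{2d/(d-2)}(\R^d)$, and \eqref{eq:HLS_nablaG_p}, applied to $\nabla\Phi=(\nabla G)*w$ (understood distributionally), gives $\nabla\Phi\in L^2(\R^d)$; both come with bounds by $\|w\|_{L^{2d/(d+2)}(\R^d)}$. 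Since $\frac{d-2}{2d}+\frac{d+2}{2d}=1$, H\"older gives $\Phi w\in L^1(\R^d)$, so the middle term of \eqref{eq:Poisson_IBP} is meaningful. For the double integral I would observe that $G\geq0$, so by Tonelli and \eqref{eq:HLS_G_p} applied to $|w|$ one has $\iint |w(x)|\,G(x-y)\,|w(y)|\,\mathrm dx\,\mathrm dy=\int |w|\,(G*|w|)\,\mathrm dx\leq C\|w\|_{L^{2d/(d+2)}(\R^d)}^2<\infty$; hence the double integral converges absolutely and, by Fubini, equals $\int_{\R^d}\Phi w\,\mathrm dx$. It then remains only to prove $\int_{\R^d}|\nabla\Phi|^2\,\mathrm dx=\int_{\R^d}\Phi w\,\mathrm dx$.

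For that identity I would first treat $w\in\mathcal C_c^\infty(\R^d)$, where $\Phi=G*w$ is smooth and solves $-\Delta\Phi=w$ classically, with the decay $|\Phi(x)|\leq C|x|^{2-d}$ and $|\nabla\Phi(x)|\leq C|x|^{1-d}$ away from $\operatorname{supp}w$. Integrating by parts on a large ball $B_\rho$ produces $\int_{B_\rho}|\nabla\Phi|^2=\int_{B_\rho}\Phi w+\int_{\partial B_\rho}\Phi\,\partial_\nu\Phi$, and the boundary term has size $O(\rho^{2-d})\to0$ as $\rho\to\infty$ since $d\geq 3$; letting $\rho\to\infty$ gives the identity for smooth $w$. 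To reach general $w\in L^{2d/(d+2)}(\R^d)$ I would pick $w_n\in\mathcal C_c^\infty$ with $w_n\to w$ in $L^{2d/(d+2)}(\R^d)$, set $\Phi_n=G*w_n$, and invoke the HLS bounds of the first step applied to $w_n-w$ to get $\Phi_n\to\Phi$ in $L^{2d/(d-2)}(\R^d)$ and $\nabla\Phi_n\to\nabla\Phi$ in $L^2(\R^d)$. Then $\int|\nabla\Phi_n|^2\to\int|\nabla\Phi|^2$, and writing $\Phi_n w_n-\Phi w=(\Phi_n-\Phi)w_n+\Phi(w_n-w)$ and using H\"older together with $\sup_n\|w_n\|_{L^{2d/(d+2)}(\R^d)}<\infty$ gives $\int\Phi_n w_n\to\int\Phi w$; passing to the limit in the smooth identity then finishes the proof.

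I do not anticipate a genuine obstacle: the whole argument reduces to Lemma~\ref{lem:HLS} plus a standard mollification. The only points demanding care are the exponent bookkeeping — the outputs of HLS must land exactly on the H\"older-conjugate pair $2d/(d\pm2)$ and on $L^2$ for the gradient, which is what makes the identity well posed — and the vanishing of the boundary term in the smooth integration by parts, which is precisely where the hypothesis $d\geq 3$ enters (for $d\leq 2$ the Newtonian potential does not decay).
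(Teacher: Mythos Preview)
Your proposal is correct and follows essentially the same route as the paper: apply the HLS inequalities with $p=2d/(d+2)$ to obtain $\Phi\in L^{2d/(d-2)}$ and $\nabla\Phi\in L^2$, use Fubini for the second equality, and establish the first equality by approximation with $w_n\in\mathcal C_c^\infty$ together with the HLS-continuity of $w\mapsto(\Phi,\nabla\Phi)$. The paper's version is terser (it simply asserts that \eqref{eq:Poisson_IBP} holds for smooth compactly supported $w_n$ without spelling out the integration by parts on balls), but the logic is the same.
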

\noindent We shall use this later on with $w=u-v$ in order to
control $\psi=G*(u-v)$.
\begin{proof}
Taking $p=\frac{2d}{d+2}\in (1,d/2)$ in \eqref{eq:HLS_G_p}\eqref{eq:HLS_nablaG_p} we see that
 $\Phi\in L^{2d/(d-2)}(\R^d)$ and $\nabla\Phi\in L^2(\R^d)$. Since $(2d/(d+2))'=2d/(d-2)$ all the integrals in
\eqref{eq:Poisson_IBP} are absolutely convergent and the last equality holds
by Fubini's theorem. In order to retrieve the first equality we use approximation: if $w_n\in \mathcal{C}^{\infty}_c(\R^d)$
converges to $w$ in $L^{2d/(d+2)}(\R^d)$ then by the HLS lemma $\Phi_n\to\Phi$ in $L^{2d/(d-2)}(\R^d)$ and
$\nabla\Phi_n\to\nabla\Phi$ in $L^{2}(\R^d)$. Since \eqref{eq:Poisson_IBP} holds for smooth
$w_n\in \mathcal{C}^{\infty}_c$ with $-\Delta\Phi_n=w_n$ we conclude by letting $n\to\infty$.
\end{proof}
Back to our energy functional, we begin with a fairly standard type of result \cite{MR2383208,MR1617171}:
\begin{prop}[Energy lower bound]
Let $m\geq 1$ and $\mathcal{K}$ as in \eqref{eq:def_admissible_set_K}. The total free energy $\mathcal E$ is a proper functional on $\mathcal{K}\times\mathcal{K}$ and
\begin{equation*}
\inf\limits_{\mathcal{K}\times \mathcal{K}}\mathcal E(u,v)>-\infty.
\label{eq:lower_bound_energy}
\end{equation*}
Moreover we have in every sub-levelset $\{\mathcal E(u,v)\leq R\}$ that
\begin{enumerate}
\item[(i)] gradient control: $\|\nabla\psi\|_{L^2(\R^d)}\leq C$
 \item[(ii)] no concentration: if $m>1$ then
 $$
 \int_{\R^d}(u^m+v^m)\,\mathrm{d}x\leq C,$$
 while if $m=1$ then
 $$\int_{\R^d}(u|\log u|+v|\log v|)\,\mathrm{d}x\leq C. $$
 \item[(iii)] mass confinement: $\int_{\R^d} |x|^2(u+v)\,\mathrm{d}x\leq C,$
\end{enumerate}
for some $C>0$ depending on $R>0$, the confining potentials, and
$m$.
\label{prop:lower_bound_energy+equiintegrability}
\end{prop}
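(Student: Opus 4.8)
The plan is to notice that in the total free energy \eqref{eq:def_total_energy} two of the three pieces are manifestly nonnegative, so that only the diffusive part can threaten a lower bound, and this happens only when $m=1$, where it is in turn tamed by the confinement built into $\mathcal E_{\rm ext}$ thanks to $C_1>0$. Indeed, by \eqref{eq:def_energy_coupling} one has $\mathcal E_{\rm cpl}(u,v)=\tfrac12\int_{\R^d}|\nabla\psi|^2\,\mathrm dx\in[0,+\infty]$, and by the lower bound in \eqref{eq:structural_potentials},
$$
\mathcal E_{\rm ext}(u,v)=\int_{\R^d}(uU+vV)\,\mathrm dx\ \geq\ C_1\int_{\R^d}|x|^2(u+v)\,\mathrm dx\ \geq\ 0 .
$$
Since $\mathcal H(\rho)$ is well defined in $(-\infty,+\infty]$ for every $\rho\in\mathcal P^{ac}$ with finite second moment (estimate its negative part on $\{\rho<\gamma\}$ and on $\{\gamma\leq\rho<1\}$ against a fixed Gaussian $\gamma$), the functional $\mathcal E$ is well defined with values in $(-\infty,+\infty]$ as soon as a lower bound on $\mathcal E_{\rm diff}$ is available, and it is not identically $+\infty$ since the pair $u=v$ equal to a normalized Gaussian has $\psi\equiv 0$ and finite diffusive and external energies; hence properness comes for free.

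For $m>1$ the conclusion is immediate: $\mathcal E_{\rm diff}=\tfrac1{m-1}\int_{\R^d}(u^m+v^m)\,\mathrm dx\geq 0$ gives $\mathcal E\geq\mathcal E_{\rm ext}\geq C_1\int|x|^2(u+v)\geq 0$, so $\inf\mathcal E\geq 0$; and on the sublevel set $\{\mathcal E\leq R\}$ one simply reads off (iii) $\int|x|^2(u+v)\leq R/C_1$, (ii) $\int(u^m+v^m)=(m-1)\big(\mathcal E-\mathcal E_{\rm ext}-\mathcal E_{\rm cpl}\big)\leq(m-1)R$, and (i) $\tfrac12\|\nabla\psi\|_{L^2}^2=\mathcal E-\mathcal E_{\rm diff}-\mathcal E_{\rm ext}\leq R$.

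The case $m=1$ is where the one genuine point lies. Fix $\beta\in(0,C_1)$, set $\gamma_\beta=Z_\beta^{-1}e^{-\beta|x|^2}$ with $Z_\beta=(\pi/\beta)^{d/2}$, and use nonnegativity of the relative entropy $\int\rho\log(\rho/\gamma_\beta)\geq0$, which gives $\mathcal H(\rho)\geq-\log Z_\beta-\beta\int|x|^2\rho$ for every $\rho\in\mathcal P^{ac}$. Applying this to $u$ and $v$ and using $\mathcal E_{\rm ext}\geq C_1\int|x|^2(u+v)$ yields
$$
\mathcal E(u,v)\ \geq\ -2\log Z_\beta+(C_1-\beta)\int_{\R^d}|x|^2(u+v)\,\mathrm dx ,
$$
so that $\inf\mathcal E\geq-2\log Z_\beta>-\infty$ and, on $\{\mathcal E\leq R\}$, (iii) holds with $C=(R+2\log Z_\beta)/(C_1-\beta)$. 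Inserting (iii) back into $\mathcal H(u)\geq-\log Z_\beta-\beta\int|x|^2u$ (and likewise for $v$) bounds $\mathcal H(u)$ and $\mathcal H(v)$ below by a constant; combined with $\mathcal E_{\rm diff}=\mathcal E-\mathcal E_{\rm ext}-\mathcal E_{\rm cpl}\leq R$ this bounds them above as well, and it gives (i) via $\tfrac12\|\nabla\psi\|_{L^2}^2=\mathcal E-\mathcal E_{\rm diff}-\mathcal E_{\rm ext}$. Finally, for (ii), write $\int u|\log u|=\mathcal H(u)+2\int_{\{u<1\}}u\log\tfrac1u\,\mathrm dx$; the elementary inequality $\log t\leq\tfrac2e\sqrt t$ ($t>0$) gives $u\log\tfrac1u\leq|x|^2u$ on $\{e^{-|x|^2}\leq u<1\}$ and $u\log\tfrac1u\leq\tfrac2e\sqrt u\leq\tfrac2e e^{-|x|^2/2}$ on $\{u<e^{-|x|^2}\}$, whence
$$
\int_{\R^d}u|\log u|\,\mathrm dx\ \leq\ \mathcal H(u)+2\int_{\R^d}|x|^2u\,\mathrm dx+\tfrac4e(2\pi)^{d/2},
$$
and the right-hand side is controlled on $\{\mathcal E\leq R\}$ by the estimates already obtained; the argument for $v$ is identical. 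The only mildly delicate step is this $m=1$ case — and within it the choice $\beta<C_1$, so that the spurious quadratic term produced by the Gaussian comparison is absorbed by the strict confinement; everything else is bookkeeping with the three facts $\mathcal E_{\rm cpl}\geq0$, $\mathcal E_{\rm ext}\geq C_1\int|x|^2(u+v)$, and the entropy--Gaussian inequality.
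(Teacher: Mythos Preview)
Your proof is correct and follows essentially the same structure as the paper's: for $m>1$ all three energy pieces are nonnegative and the bounds are read off directly, while for $m=1$ the entropy is bounded below in terms of the second moment and then absorbed into the confining potential. The only difference is in how that entropy lower bound is obtained: the paper cites the Carleman estimate $\mathcal H(\rho)\geq -C(1+\mathfrak m_2(\rho))^\alpha$ with $\alpha\in(0,1)$, whereas you derive the linear bound $\mathcal H(\rho)\geq -\log Z_\beta-\beta\,\mathfrak m_2(\rho)$ directly from nonnegativity of the relative entropy against a Gaussian. Your version is more self-contained and makes explicit the role of the strict confinement $C_1>0$ (needed so that $\beta<C_1$ leaves a positive coefficient on the moment), while the sublinear Carleman form would in principle tolerate a borderline coefficient; in the present setting both work equally well. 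Your treatment of (ii) for $m=1$, splitting $\{u<1\}$ against the Gaussian threshold $e^{-|x|^2}$, is also more explicit than the paper's one-line appeal to the Carleman estimate.
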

\begin{proof}
Choosing $u,v$ smooth and compactly supported it is clear that
$\mathcal E(u,v)<\infty$ so $\mathcal E$ is proper.

\noindent$\mathbf{m>1:}$ (i)-(ii)
immediately hold because each term in \eqref{eq:def_total_energy} is nonnegative.
(iii) then follows by $\int_{\R^d}(uU+vV)\,\mathrm{d}x\leq \mathcal E(u,v)$ together with
\eqref{eq:structural_potentials}.

\noindent$\mathbf{m=1:}$ if $\mathfrak{m}_2(\rho)=\int_{\R^d}|x|^2 \rho \mathrm{d}x$
denotes the second moment let us first recall \cite{MR1617171} the Carleman estimate
\begin{equation}
 \mathcal{H}(\rho) \geq
-\int_{\R^d} \rho (\log \rho)^-\,\mathrm{d}x\geq
-C(1+\mathfrak{m}_2(\rho))^{\alpha}, \rho\in \mathcal{P},
\label{eq:Carlemann}
\end{equation}
for some $C>0$ and $\alpha\in (0,1)$ depending on the dimension $d$
only. By \eqref{eq:structural_potentials} we have $\mathcal E_{\rm ext}(u,v)\geq
C_1(\mathfrak{m}_2(u)+\mathfrak{m}_2(v))$, whence
\begin{align*}
&C\left[\mathfrak{m}_2(u)+\mathfrak{m}_2(v)-(1+\mathfrak{m}_2(u))^{\alpha}-(1+\mathfrak{m}_2(v))^{\alpha}\right]\\
&\hspace{2cm}\leq
\mathcal{H}(u)+\mathcal{H}(v)+\int_{\R^d}(uU+vV)\,\mathrm{d}x\leq \mathcal  E(u,v)\leq
C.
\end{align*}
Hence the second moments are bounded as in (iii). Then (i) and (ii)
come immediately from (iii) and \eqref{eq:Carlemann}.
\end{proof}

For fixed $z_\ast=(u_\ast, v_\ast)\in
\mathcal{P}\times\mathcal{P}$, and given time step $h>0$ we set
\begin{equation}
\mathcal F_h(z):=\frac{1}{2h}d^2(z,z_*)+\mathcal E(z), \qquad\ z=(u,v)\in\mathcal{K}\times
\mathcal{K}.
\label{def: energy functional in JKO scheme}
\end{equation}
In order to define later a discrete sequence of approximate
solutions using the JKO minimizing scheme, we collect here some
properties of $\mathcal F_h$ and preliminary results.
\begin{prop}[Existence of minimizers]
Fix $h>0$, and $z_*=(u_*,v_*)\in\mathcal{P}\times\mathcal{P}$.
Then $\mathcal F_h$ admits a unique minimizer
$z=(u,v)\in\mathcal{K}\times\mathcal{K}$.
\label{prop:discrete_scheme_well_defined}
\end{prop}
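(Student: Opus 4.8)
The plan is to run the direct method of the calculus of variations, the only nonstandard ingredient being the coupling term $\mathcal E_{\rm cpl}$, which is handled through the a priori gradient control already recorded in Proposition~\ref{prop:lower_bound_energy+equiintegrability}. First, $\mathcal F_h$ is proper: it is bounded below since $d^2\geq 0$ and $\inf_{\mathcal K\times\mathcal K}\mathcal E>-\infty$, and it is finite at, say, any pair of smooth compactly supported probability densities. Take a minimizing sequence $z_k=(u_k,v_k)\in\mathcal K\times\mathcal K$, $\mathcal F_h(z_k)\to\inf\mathcal F_h$. Since every term of $\mathcal E$ is bounded below, $\tfrac{1}{2h}d^2(z_k,z_*)\leq \mathcal F_h(z_k)-\inf_{\mathcal K\times\mathcal K}\mathcal E\leq C$, so $d_W(u_k,u_*),d_W(v_k,v_*)$ are bounded, hence (triangle inequality against $\delta_0$) so are the second moments of $u_k,v_k$; in particular $\{u_k\},\{v_k\}$ are tight and, by Prokhorov, up to a subsequence $u_k\to u$, $v_k\to v$ narrowly. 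Moreover the bound $\mathcal E(z_k)\leq R$ places us in a sublevelset, so Proposition~\ref{prop:lower_bound_energy+equiintegrability} gives uniformly $\|\nabla\psi_k\|_{L^2(\R^d)}\leq C$, $\int_{\R^d}(u_k^m+v_k^m)\leq C$ (resp. $\int_{\R^d}(u_k|\log u_k|+v_k|\log v_k|)\leq C$ if $m=1$), and $\int_{\R^d}|x|^2(u_k+v_k)\leq C$.

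Next I would verify lower semicontinuity of $\mathcal F_h$ along this sequence. For $\mathcal E_{\rm diff}$ with $m>1$, the uniform $L^m$ bound lets us extract $u_k\rightharpoonup u$, $v_k\rightharpoonup v$ weakly in $L^m$ (so the narrow limits are absolutely continuous and lie in $L^m$, i.e. in $\mathcal K_m$), and $\liminf\int u_k^m\geq\int u^m$; for $m=1$ the entropy bound yields equi-integrability via the de la Vall\'ee-Poussin criterion, hence weak $L^1$ relative compactness (so $u,v\in L^1\log L^1$), and the entropy is lower semicontinuous using in addition the Carleman estimate~\eqref{eq:Carlemann} together with the bounded second moments to control the negative part. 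The external term $\mathcal E_{\rm ext}$ is lower semicontinuous under narrow convergence because $U,V$ are continuous and nonnegative. For the coupling term, $\|\nabla\psi_k\|_{L^2}\leq C$ gives (after a further subsequence) $\nabla\psi_k\rightharpoonup\Xi$ weakly in $L^2(\R^d)$; since $G*\varphi\in\mathcal C_0(\R^d)$ for $\varphi\in\mathcal C^\infty_c(\R^d)$, narrow convergence of $u_k-v_k$ forces $\psi_k\to\psi:=G*(u-v)$ in $\mathcal D'(\R^d)$, hence $\Xi=\nabla\psi\in L^2$ and $\mathcal E_{\rm cpl}(u,v)=\tfrac12\|\nabla\psi\|_{L^2}^2\leq\liminf\tfrac12\|\nabla\psi_k\|_{L^2}^2$. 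Finally $d_W^2(\cdot,u_*)$ and $d_W^2(\cdot,v_*)$ are lower semicontinuous for narrow convergence, and $\int_{\R^d}|x|^2(u+v)\leq C<\infty$ survives in the limit, so $z=(u,v)\in\mathcal K\times\mathcal K$ and $\mathcal F_h(z)\leq\liminf\mathcal F_h(z_k)=\inf\mathcal F_h$: $z$ is a minimizer.

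For uniqueness I would argue by strict convexity along \emph{linear} interpolations of measures — displacement convexity is unavailable here, since neither $U,V$ nor $G$ is convex. Given two minimizers $z_0,z_1$, put $z_t=(1-t)z_0+tz_1$, which stays in the convex set $\mathcal K\times\mathcal K$. The map $\mu\mapsto d_W^2(\mu,\mu_*)$ is convex along $t\mapsto(1-t)\mu_0+t\mu_1$: the linear interpolation $(1-t)\gamma_0+t\gamma_1$ of the optimal plans $\gamma_i\in\Gamma(\mu_i,\mu_*)$ is admissible between $\mu_t$ and $\mu_*$ with cost $(1-t)d_W^2(\mu_0,\mu_*)+td_W^2(\mu_1,\mu_*)$. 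The coupling energy is convex because $\mathcal E_{\rm cpl}(u,v)=\tfrac12\|\nabla(-\Delta)^{-1}(u-v)\|_{L^2}^2$ is the square of a seminorm evaluated at $u-v$, which depends affinely on $(u,v)$; $\mathcal E_{\rm ext}$ is linear; and $\mathcal E_{\rm diff}$ is \emph{strictly} convex along linear interpolations because $s\mapsto s\log s$ (resp. $s\mapsto s^m$, $m>1$) is strictly convex. Hence $\mathcal F_h(z_{1/2})\leq\tfrac12\mathcal F_h(z_0)+\tfrac12\mathcal F_h(z_1)=\inf\mathcal F_h$ forces equality in the $\mathcal E_{\rm diff}$ contribution, whence $u_0=u_1$ and $v_0=v_1$ a.e., i.e. $z_0=z_1$.

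The only genuinely delicate point is the coupling term: because of the singular kernel $G$, $\mathcal E_{\rm cpl}$ may be $+\infty$ on part of $\mathcal K\times\mathcal K$ and is not obviously lower semicontinuous in isolation, but along a minimizing (hence energy-bounded) sequence the $L^2$ gradient estimate of Proposition~\ref{prop:lower_bound_energy+equiintegrability} is precisely what rescues the compactness and lower semicontinuity step, while in its ``$\dot H^{-1}$-norm'' guise it supplies the convexity needed for uniqueness; the remaining arguments are the classical JKO machinery.
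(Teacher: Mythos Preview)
Your proof is correct and follows essentially the same approach as the paper: the direct method, with compactness and equi-integrability supplied by Proposition~\ref{prop:lower_bound_energy+equiintegrability}, standard lower semicontinuity for the Wasserstein, diffusive and potential terms, a separate l.s.c.\ argument for the coupling Dirichlet energy, and uniqueness via strict convexity of $\mathcal E_{\rm diff}$ along \emph{linear} interpolations. The only noteworthy difference is in the handling of $\mathcal E_{\rm cpl}$: the paper defers to an appendix (Proposition~\ref{prop:Dirichlet_lsc}) proving weak-$L^1$ l.s.c.\ of $w\mapsto\|\nabla G*w\|_{L^2}^2$ via weak-$L^{d/(d-2)}$ compactness of $\Phi_n$, whereas you shortcut this by using directly the a priori bound $\|\nabla\psi_k\|_{L^2}\leq C$ from Proposition~\ref{prop:lower_bound_energy+equiintegrability}, extracting a weak $L^2$ limit of $\nabla\psi_k$, and identifying it through $\psi_k\to\psi$ in $\mathcal D'$ (using $G*\varphi\in\mathcal C_0$). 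Your route is slightly more economical in this context since the gradient bound is already available on an energy-bounded sequence; the paper's appendix statement is more general (it does not presuppose the $L^2$ bound) but amounts to the same mechanism.
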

\begin{proof}
By Proposition~\ref{prop:lower_bound_energy+equiintegrability},
$\mathcal F_h$ is bounded from below on $\mathcal{K}\times \mathcal{K}$,
hence there is a minimizing sequence $z_k=(u_k,v_k)$ satisfying
(i)-(iii) and $\{u_k,v_k\}_k$ are tight and uniformly
integrable. By the Dunford-Pettis Theorem one may extract a
subsequence such that
$$
u_k\rightharpoonup u\text{ and }v_k\rightharpoonup v\qquad \text{in
}L^1(\R^d),
$$
and standard truncation arguments together with the uniform bounds
on the second moments ensure that $u,v\in \mathcal{P}$. The weak
$L^1$ lower semi-continuity (l.s.c.) of the squared Wasserstein
distance, diffusive and potential energies are standard, in
particular $u,v\in \mathcal{K}$. We prove in the appendix,
Proposition~\ref{prop:Dirichlet_lsc} that the Dirichlet energy is
lower semicontinuous with respect to weak $L^1(\R^d)$ convergence.
Because $u_k-v_k\rightharpoonup u-v$ in $L^1(\R^d)$ we conclude here
that
%
%
$
\mathcal  E_{\rm cpl}(u,v)\leq \liminf \limits_{k\to\infty}\mathcal E_{\rm cpl}(u_k,v_k),
$
thus $u,v$ is a minimizer.
Finally, the uniqueness result comes from the fact that the admissible
set $\mathcal{K}\times \mathcal{K}$ is convex w.r.t. linear
interpolation $z_\theta=(1-\theta)z_0+\theta z_1$ and that the total
free energy is jointly strictly convex in $(u,v)$.
\end{proof}
We remark that the squared distance term left aside in \eqref{def: energy functional
in JKO scheme}, the same line of argument would readily give existence of a global
 minimizer of the total free energy $\mathcal E$, which would result in the end in a least energy
 stationary weak solution $\underline{u},\underline{v}$ to \eqref{eq:PNP}. Since our gradient
  flow system is driven by $\mathcal E$ one could expect long time convergence
  $u(t),v(t)\to \underline{u},\underline{v}$ when $t\to\infty$ together with some
   convergence rates. However, the lack of displacement convexity prevents here from
   applying standard techniques \cite{MR2401600,MR2053570,MR1842429} and this is beyond
    the scope of this paper. We refer to \cite{MR1777308,MR1866628} for related results on similar PNP models.

In the next section we shall derive the discrete Euler-Lagrange
equations satisfied by the minimizers, which requires integration by
parts as in \eqref{eq:Poisson_IBP}. However, at this stage the
minimizers only lie in $L^m(\R^d)$ if $m>1$ and $L^1\log L^1(\R^d)$
if $m=1$,
and this manipulation is not justified. The discrete Euler-Lagrange equations are necessary to pass to
the limit as the time step  $h \rightarrow 0$ and to thereby obtain a solution to the PNP system. The
remainder of this section is devoted to improving the regularity of the minimizers of the discrete functional.

The argument is based on the flow interchange technique of Matthes,
McCann, and Savar\'e, \cite{MR2581977}, as implemented by Blanchet
and Lauren\c{c}ot \cite{MR3040679}, as well as Lauren\c{c}ot and
Matioc \cite{MR3044141} and \cite{BCKKLL}. The idea of the flow interchange technique
is that a known gradient flow is sufficiently close, generally first
order close, to the one under study so that it may be used as an
approximation with controllable error. We need to use this method
twice, first to propagate the regularity of the minimizers and then
to establish some smoothness of their spatial gradients. The
characterization of gradient flow that is useful here is the so
called Evolution Variational Inequality (EVI) for a functional  $\mathcal F$.
Using the notation to follow, a flow  $(\tilde u(t))$  is a gradient
flow in the EVI sense, \cite{MR3050280,MR2401600}, provided that
\begin{equation}
\label{eq:GF_def}
\frac12\frac{\mathrm{d}}{\mathrm{d}t}d_W^2(\tilde u(t),w) + \mathcal F(\tilde
u(t)) \leq \mathcal F(w) \ \text{for all} \ w \in \mathcal{P}^{ac}(\R^d)\
\mbox{and a.e. }\ t>0.
\end{equation}
Displacement convexity and the other detailed requirements for \eqref{eq:GF_def} to hold are discussed in the references just cited. For our purposes we note that \eqref{eq:GF_def} is valid for
\begin{enumerate}
\item solutions of $\partial_t \tilde{u} = \Delta \tilde{u}$, the heat equation, with $\mathcal F = \mathcal{H}$, the Boltzmann entropy
\begin{equation}
\label{eq:3.1}
\mathcal{H}(\tilde{u}) = \int_{\R^d} \tilde{u} \log \tilde{u} \mathrm{d}x \ \ \mathrm{and}
\end{equation}
\item solutions of $\partial_t\tilde{u}=\Delta \left(\tilde{u}^p\right)$, the porous medium flow, with $\mathcal F = \mathcal{E}_p, 1<p<\infty,$ given by the functional
\begin{equation}
\label{eq:3.2}
\mathcal{E}_p(\tilde{u}) := \frac{1}{p-1}\int_{\R^d} \tilde{u}^p \mathrm{d}x.
\end{equation}
\end{enumerate}

\begin{prop}[Discrete propagation of $L^p$ estimates] Let $m\geq 1$, $\lambda$ as in \eqref{def of lambda},
and further assume that $u_*,v_*\in \mathcal{K}\cap L^p(\R^d)$ for
some $p\in(1,\infty)$. If $0<h<h_0(p)=\frac{1}{\lambda(p-1)}$ then
the minimizer $(u, v)$ from
Proposition~\ref{prop:discrete_scheme_well_defined} satisfies
 \begin{equation}
\|u\|_{L^p(\R^d)}^p+\|v\|^p_{L^p(\R^d)}\leq \frac{1}{1-\lambda(p-1)
h} \Big(\|u_*\|_{L^p(\R^d)}^p+\|v_*\|^p_{L^p(\R^d)}\Big).
\label{eq:discrete_Lp_propagation}
 \end{equation}
 \label{prop:discrete_Lp_propagation}
\end{prop}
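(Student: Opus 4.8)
The plan is to implement the flow interchange method of Matthes--McCann--Savar\'e with the auxiliary flow chosen to be the one whose EVI-driving functional is essentially $\mathcal{E}_p$ (for the Porous Medium flow $\partial_t\tilde u=\Delta(\tilde u^p)$) applied separately to each component of the minimizer $(u,v)$. Concretely, let $(u,v)$ be the minimizer of $\mathcal F_h$ from Proposition~\ref{prop:discrete_scheme_well_defined}, and run the PME flow $S^u_s$ and $S^v_s$ with initial data $u$ and $v$; set $u_s=S^u_s u$, $v_s=S^v_s v$, $z_s=(u_s,v_s)$. Since $(u,v)$ is a minimizer, $\mathcal F_h(z_0)\leq \mathcal F_h(z_s)$ for all $s>0$, which after dividing by $s$ and letting $s\to 0^+$ yields the key differential inequality
\begin{equation*}
\limsup_{s\to 0^+}\frac{1}{s}\Big(\frac{d^2(z_0,z_*)-d^2(z_s,z_*)}{2h}\Big) \;\geq\; -\frac{\mathrm{d}}{\mathrm{d}s}\Big|_{s=0^+}\mathcal E(z_s).
\end{equation*}
The left-hand side is controlled by the EVI \eqref{eq:GF_def} for the PME flow with functional $\mathcal E_p$: applying it componentwise with $w=u_*$ and $w=v_*$ bounds it above by $\tfrac{1}{h}\big(\mathcal E_p(u)+\mathcal E_p(v)-\mathcal E_p(u_*)-\mathcal E_p(v_*)\big)$, i.e.\ by a difference of $L^p$ norms. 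So the whole strategy reduces to getting a good lower bound on the dissipation of the \emph{total} energy $\mathcal E$ along the auxiliary PME flow.

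The heart of the matter is therefore to compute/estimate $-\frac{\mathrm{d}}{\mathrm{d}s}\mathcal E(z_s)\big|_{s=0}$, term by term in $\mathcal E=\mathcal E_{\rm diff}+\mathcal E_{\rm ext}+\mathcal E_{\rm cpl}$, using $\partial_s u_s=\Delta u_s^p$ (and likewise for $v$). For the diffusion term one gets a manifestly nonnegative contribution (the PME flow decreases $\mathcal E_{\rm diff}$, so $-\tfrac{\mathrm{d}}{\mathrm{d}s}\mathcal E_{\rm diff}\geq 0$ and can simply be dropped). For the external potential term, integrating by parts twice, $\tfrac{\mathrm{d}}{\mathrm{d}s}\int uU = \int U\Delta u^p = \int u^p\Delta U \leq \|\Delta U\|_{L^\infty}\|u\|_{L^p}^p$, which is exactly where the constant $\lambda$ in \eqref{def of lambda} enters; summing the $u$ and $v$ contributions produces the $\lambda(\|u\|_{L^p}^p+\|v\|_{L^p}^p)$ term. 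For the coupling term, using $\tfrac{\delta}{\delta u}\mathcal E_{\rm cpl}=\psi$ and $\tfrac{\delta}{\delta v}\mathcal E_{\rm cpl}=-\psi$, one gets $\tfrac{\mathrm{d}}{\mathrm{d}s}\mathcal E_{\rm cpl}=\int \psi(\Delta u_s^p-\Delta v_s^p)=\int \Delta\psi\,(u_s^p-v_s^p)=-\int (u_s-v_s)(u_s^p-v_s^p)$, and the integrand $(a-b)(a^p-b^p)\geq 0$ pointwise, so this term is $\leq 0$ and hence $-\tfrac{\mathrm{d}}{\mathrm{d}s}\mathcal E_{\rm cpl}\geq 0$ — it helps us and can be discarded. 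Collecting everything: $-\tfrac{\mathrm{d}}{\mathrm{d}s}\mathcal E(z_s)\big|_{s=0}\geq -\lambda(\|u\|_{L^p}^p+\|v\|_{L^p}^p)$. Combining with the EVI bound on the left-hand side and rearranging gives $(1-\lambda(p-1)h)(\|u\|_{L^p}^p+\|v\|_{L^p}^p)\leq \|u_*\|_{L^p}^p+\|v_*\|_{L^p}^p$, which is \eqref{eq:discrete_Lp_propagation} under the smallness assumption $h<h_0(p)=1/(\lambda(p-1))$. (The exponent $p$ of $\mathcal E_p$ must be matched to the $L^p$ norm we want to propagate: one uses $\mathcal E_p(\rho)=\tfrac{1}{p-1}\int\rho^p$, and the $(p-1)$ in $h_0$ comes from that normalization and from the $\int u^p\Delta U$ scaling.)

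The main obstacle is rigor, not the formal computation: a priori the minimizer only lies in $\mathcal K$, so one must justify (a) that the PME flow emanating from such low-regularity data is well defined and satisfies the EVI, (b) that the chain-rule identities $\tfrac{\mathrm{d}}{\mathrm{d}s}\mathcal E(z_s)=\int(\tfrac{\delta\mathcal E}{\delta u})\Delta u_s^p+\int(\tfrac{\delta\mathcal E}{\delta v})\Delta v_s^p$ and the subsequent integrations by parts are legitimate, and (c) that all the limits as $s\to 0^+$ exist in the required sense (one typically works with the difference quotient and uses lower/upper semicontinuity rather than pointwise differentiability, following \cite{MR2581977,MR3040679,MR3044141}). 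Since $(u_s^p-v_s^p)(u_s-v_s)\ge 0$ pointwise the coupling term's sign is robust, and for the external term one can regularize $u$ first and pass to the limit; the cleanest route is to prove the inequality with $u,v$ replaced by smooth compactly supported truncations, obtain \eqref{eq:discrete_Lp_propagation} for those, and then conclude by lower semicontinuity of the $L^p$ norm and a standard approximation/stability argument for the JKO minimizer. We follow this pattern, which is by now standard in the references cited above.
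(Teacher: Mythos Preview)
Your proposal is correct and matches the paper's argument essentially line for line: the same auxiliary PME flow $\partial_s\tilde u=\Delta\tilde u^p$ (componentwise), the same term-by-term analysis of $\frac{\mathrm d}{\mathrm ds}\mathcal E$ (diffusion and coupling terms have the good sign, the latter via $(a-b)(a^p-b^p)\geq 0$; the external term produces $\lambda\int(\tilde u^p+\tilde v^p)$), and the same use of the EVI for $\mathcal E_p$ to control the Wasserstein term.

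The one place where the paper proceeds differently is in the handling of rigor. Your suggestion to ``prove the inequality with $u,v$ replaced by smooth compactly supported truncations\ldots and then conclude by\ldots a standard approximation/stability argument for the JKO minimizer'' is risky: once you perturb $(u,v)$ you lose the minimality $\mathcal F_h(z_0)\leq\mathcal F_h(z_s)$ on which the entire flow-interchange rests, and stability of JKO minimizers under perturbation of the data is not free. The paper instead keeps the actual minimizer and exploits the $L^1$--$L^\infty$ smoothing of the PME: for every $t>0$ one has $\tilde u(t),\tilde v(t)\in L^\infty(\R^d)$, so all the integrations by parts (in particular \eqref{eq:Poisson_IBP} for the coupling term) are legitimate for $t>0$. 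Minimality then forces the dissipation inequality $\mathcal D(t_n)\geq 0$ along some sequence $t_n\searrow 0$, and one passes to the limit using lower semicontinuity of the $L^p$ norm under the $L^1$ convergence $\tilde u(t_n)\to u$. This avoids any stability argument and is the cleaner route.
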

In this first use of the flow interchange, we simply use the
solution of  \eqref{eq:PMEsystem} below as variations in the minimum
principle. Note that at this point the time step $h$ must be taken
small in terms of $p$ for the minimizing problem to ``see'' the
estimate. As a consequence there is no hope to retrieve an
$L^{\infty}(\R^d)$ estimate at the discrete level for fixed $h$
directly from the limit $p\to\infty$ in
\eqref{eq:discrete_Lp_propagation}, since $h<h_0(p)$ would require
$h\to 0$. However, $u,v$ will be retrieved as some limit when $h\to
0$, so one can actually take $p$ arbitrarily large and the weak
solutions will ultimately satisfy such an $L^{\infty}$ estimate. See
the proof of Theorem~\ref{theo:main_theo} at the end of
Section~\ref{section:CV_to_weak_solution} for details.
\begin{proof}
For fixed $p\in(1,\infty)$ and $u_*,v_*\in \mathcal{K}\cap L^p$
consider the auxiliary Porous Media flows $\tilde u(t),\tilde{v}(t)$ defined by
\begin{equation}
\label{eq:PMEsystem}
\begin{aligned}
&\partial_t\tilde{u}=\Delta \left(\tilde{u}^p\right)\text{ in }(0,\infty)\times\R^d, & \tilde{u}|_{t=0}=u \text{ in }\R^d,\\
&\partial_t\tilde{v}=\Delta \left(\tilde{v}^p\right) \text{ in }(0,\infty)\times\R^d, & \tilde{v}|_{t=0}=v \text{ in }\R^d.
\end{aligned}
\end{equation}
By standard results for the PME \cite{MR2286292} we know that
(i) these Cauchy problems are well posed and $\tilde{u},\tilde{v}\in\mathcal{C}\big([0,\infty);L^1(\R^d)\big)$ remain probability measures,
(ii) by $L^1-L^{\infty}$ smoothing $\tilde{u}(t),\tilde{v}(t)\in L^{\infty}(\R^d)$ for all $t>0$,
and (iii) the second moments remain finite.
 As a consequence $\tilde{u}(t),\tilde{v}(t)\in \mathcal{K}$ are admissible for any $t>0$, and by
 Proposition \ref{prop:Poisson_IBP} it will be no issue to integrate by parts in the coupling term.

\noindent\textbf{Step 1: dissipation of the internal energy.} We
first claim that
\begin{equation}
t>0:\qquad \frac{\mathrm{d}}{\mathrm{d}t}\mathcal E_{\rm diff}(\tilde{u},\tilde{v})\leq 0,
\label{eq:dissipation_Ediff}
\end{equation}
and we distinguish cases, $m=1$ being the most involved.

\noindent \textit{$m>1:$} By usual properties \cite{MR2286292} of
the PME all the $L^q(\R^d)$ norms are non-increasing along the PME
flow, in particular
$\mathcal E_{\rm diff}(\tilde{u},\tilde{v})=\frac{1}{m-1}\left(\|\tilde{u}\|^m_{L^m(\R^d)}+\|\tilde{v}\|^m_{L^m(\R^d)}\right)$
is nonincreasing in time.

\noindent \textit{$m=1:$} Assume first that $u$ is smooth and
positive. Then by standard properties of the PME flow so is $\tilde{u}(t)$ for later times and, therefore, we
get
\begin{equation}
\frac{\mathrm{d}}{\mathrm{d}t}\Big(\int_{\R^d}\tilde{u}\log\tilde{u}\,\mathrm{d}x\Big)
=\int_{\R^d}(\log\tilde{u})\Delta\tilde{u}^p\,\mathrm{d}x=-\frac{4}{p}\int_{\R^d}|\nabla\tilde{u}^{p/2}|^2\mathrm{d}x
\leq 0
\label{eq:dissipation_Boltzmann_along_PME}
\end{equation}
for $t>0$. Thus $t\mapsto \mathcal{H}(\tilde u(t))$ is nonincreasing.

If $u$ is not smooth and positive we first regularize it by running the heat equation for small times
$$
u_k=\Gamma_{1/k}*u\underset{k\to\infty}{\rightarrow}u,
$$
where $\Gamma_s$ is the usual heat kernel at time $s$. Since the heat equation is the $\mathcal{H}$-gradient flow we have in particular $\mathcal{H}(u_k)\leq \mathcal{H}(u)$, and of course $u_k$ is positive and smooth.
Denoting by $\tilde{u}_k(t)$ the solution of the corresponding PME-flow $\partial_t\tilde{u}_k=\Delta\tilde{u}^p_k$ starting from $u_k$, then the previous computation \eqref{eq:dissipation_Boltzmann_along_PME} shows that
$$
\forall \,t>0:\qquad \mathcal{H}(\tilde u_k(t))\leq \mathcal{H}(\tilde u_k(0))\leq \mathcal{H}(u).
$$
Since $u_k\to u$ in $L^1(\R^d)$, we get by standard $L^1$ contractivity of the PME that $\tilde{u}_k(t)\to\tilde{u}(t)$ in $L^1(\R^d)$ uniformly in $t\geq 0$ when $k\to\infty$, in particular weakly in $L^1(\R^d)$. By lower semi-continuity of $\mathcal{H}$ with respect to weak $L^1$ convergence we conclude that
$$
\forall \,t>0:\qquad \mathcal{H}(\tilde u(t))\leq \liminf\limits_{k\to\infty}\mathcal{H}(\tilde u_k(t))\leq \mathcal{H}(u).
$$
Finally, by uniqueness of solutions to the PME flow with
$\tilde{u}(0)=u$ we conclude that $t\mapsto\mathcal{H}(\tilde u(t))$
is monotone nonincreasing, and similarly arguing for $v$ entails
\eqref{eq:dissipation_Ediff}  as claimed.

\noindent\textbf{Step 2: the remaining terms.} Arguing by
approximation \cite{MR2286292} the potential energy is easily
controlled for $t>0$ as
\begin{align}
\frac{\mathrm{d}}{\mathrm{d}t} \mathcal E_{\rm
ext}(\tilde{u},\tilde{v})&=\frac{\mathrm{d}}{\mathrm{d}t}\int_{\R^d}
\big(\tilde{u}U+\tilde{v}V\big)\,\mathrm{d}x
= \int_{\R^d}\big[(\Delta\tilde{u}^p)U+(\Delta\tilde{v}^p)V\big]\,\mathrm{d}x\nonumber\\
  & =\int_{\R^d}\big[\tilde{u}^p\Delta U+\tilde{v}^p\Delta V\big]\,\mathrm{d}x
  \leq\lambda\int_{\R^d}(\tilde{u}^p+\tilde{v}^p)\,\mathrm{d}x.
  \label{eq:dissipation_Eext}
\end{align}
For the coupling term, let
$\tilde{\psi}(t)=G*(\tilde{u}-\tilde{v})(t)$ and observe that for
$t>0$ we have
$\partial_t\tilde{\psi}=\partial_t[(-\Delta)^{-1}(\tilde{u}-\tilde{v})]=(-\Delta)^{-1}[\partial_t(\tilde{u}-\tilde{v})]=-(\tilde{u}^p-\tilde{v}^p)$.
Since $\tilde{u}(t),\tilde{v}(t)\in L^{\infty}(\R^d)$ for $t>0$ we can legitimately
integrate by parts
\begin{align}
\frac{\mathrm{d}}{\mathrm{d}t} \mathcal E_{\rm cpl}(\tilde{u},\tilde{v})& =\frac{\mathrm{d}}{\mathrm{d}t}\left(\frac{1}{2}\int_{\R^d}|\nabla\tilde{\psi}|^2\,\mathrm{d}x\right) \nonumber\\
  & =\int_{\R^d} (-\Delta\tilde{\psi})\partial_t\tilde{\psi}\,\mathrm{d}x
  =-\int_{\R^d} (\tilde{u}-\tilde{v}).(\tilde{u}^p-\tilde{v}^p)\,\mathrm{d}x
  \leq 0.
  \label{eq:dissipation_Ecpl}
\end{align}
Note that, due to $\|\nabla \tilde{\psi}\|_{L^2(\R^d)}=\|\nabla (-\Delta)^{-1}(\tilde{u}-\tilde{v})\|_{L^2(\R^d)}\approx \|\tilde{u}-\tilde{v}\|_{H^{-1}(\R^d)}$, this is the well-known $H^{-1}$ contraction property of the PME flow, see  \cite{MR2286292}.

As for the Wasserstein term, note that $\tilde{u},\tilde{v}$ are
 respective gradient flows of the functional $\mathcal{E}_p$, \eqref{eq:3.2}, so from \eqref{eq:GF_def},
\begin{align}
&\frac{1}{2h}\frac{\mathrm{d}}{\mathrm{d}t}\big[ d_W(\tilde{u},u_*)^2+ d_W(\tilde{v},v_*)^2\big]\nonumber\\
&\quad\leq\frac{1}{(p-1)h}\int_{\R^d}\left(u_*^p-\tilde{u}^p\right)\,\mathrm{d}x
+\frac{1}{(p-1)h}\int_{\R^d}\left(v_*^p-\tilde{v}^p\right)\,\mathrm{d}x.
\label{eq:dissipation_W2}
\end{align}

\noindent\textbf{Step 3: dissipation inequality.} Gathering
\eqref{eq:dissipation_Ediff}, \eqref{eq:dissipation_Eext},
\eqref{eq:dissipation_Ecpl}, and \eqref{eq:dissipation_W2}, we get
the total dissipation inequality
\begin{align*}
\mathcal D(t):=\frac{\mathrm{d}}{\mathrm{d}t}\mathcal F_h(\tilde{u},\tilde{v})
&\leq\frac{1}{(p-1)h}\int_{\R^d} (u_*^p+v_*^p)\,\mathrm{d}x-\frac{1}{(p-1)h}\int_{\R^d}(\tilde{u}^p+\tilde{v}^p)\,\mathrm{d}x\nonumber\\
  &\qquad+\lambda\int_{\R^d}(\tilde{u}^p+\tilde{v}^p)\,\mathrm{d}x=: \mathcal A(t)
\end{align*}
for small $t>0$. Because $\big(\tilde{u}(0),\tilde{v}(0)\big)=(u,v)$
is a minimizer we must have $D(t)\geq 0$ at least for a time
sequence $t_n\searrow 0$, otherwise $(\tilde{u}(t),\tilde{v}(t))$ would be
a strictly better competitor for small $t>0$. If
$0<h<h_0=\frac{1}{\lambda(p-1)}$ we have $1-\lambda(p-1)h>0$ and
$\mathcal A(t_n)\geq \mathcal D(t_n)\geq 0$ can be rearranged as
\begin{equation}
\label{eq:3.3}
\int_{\R^d}\big[\tilde{u}^p(t_n)+\tilde{v}^p(t_n)\big]\,\mathrm{d}x\leq
\frac{1}{1-\lambda(p-1)h}\int_{\R^d}(u^p_*+v_*^p)\,\mathrm{d}x.
\end{equation}
Our statement follows by finally letting $t_n\searrow 0$ in
\eqref{eq:3.3}, recalling that
$\big(\tilde{u}(t),\tilde{v}(t)\big)\to (u, v)$ in $L^1(\R^d)$
 when $t\to 0$.
\end{proof}
We shall also need a further regularity result for the gradient of  $(u,v)$. The use of the flow interchange in this estimate is very similar to its use in \cite{MR3040679} for the critical parabolic-parabolic Keller-Segel model.

\begin{prop}[Discrete gradient estimate]
For $m\geq 1$, $d\geq 3$, and any $h>0$, fix $z_*=(u_*,v_*)\in
\mathcal{P}\times\mathcal{P}$ and let
$z=(u,v)\in\mathcal{K}\times\mathcal{K}$ be the unique minimizer
from Proposition \ref{prop:discrete_scheme_well_defined}. Then
\begin{align}
&\big\|\nabla(u^{m/2})\big\|^2_{L^2(\R^d)}+\big\|\nabla(v^{m/2})\big\|^2_{L^2(\R^d)}\nonumber\\
&\hspace{1cm}\leq
C\Big[1+\frac{\mathcal{H}(u_*)-\mathcal{H}(u)}{h}+\frac{\mathcal{H}(v_*)-\mathcal{H}(v)}{h}\Big]
\label{eq:discrete_gradient_np^m_estimate}
\end{align}
for some $C>0$ independent of $h>0$ and $z_*$.
\label{prop:discrete_gradient_np_estimate}
\end{prop}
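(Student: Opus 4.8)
The plan is to run the flow interchange once more, now using the heat semigroup $(S_t)_{t\ge 0}$ as the auxiliary flow: it is the Wasserstein gradient flow of the Boltzmann entropy $\mathcal H$ in the EVI sense \eqref{eq:GF_def}. Set $\tilde u(t)=S_tu$, $\tilde v(t)=S_tv$. For every $t>0$ these are smooth, strictly positive probability densities in $L^\infty(\R^d)\cap\mathcal K$ with finite second moments, hence admissible competitors in the JKO functional \eqref{def: energy functional in JKO scheme}, and every integration by parts below is legitimate exactly as in the proof of Proposition~\ref{prop:discrete_Lp_propagation} (using Proposition~\ref{prop:Poisson_IBP} for the coupling term). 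Since $z=(u,v)$ is a minimizer it has finite energy $\mathcal E(z)<\infty$ (smooth compactly supported densities are competitors with finite $\mathcal F_h$ because $z_*\in\mathcal P\times\mathcal P$ has finite second moments), so the minimality $\mathcal F_h(\tilde u(t),\tilde v(t))\ge\mathcal F_h(u,v)$ rearranges, with no $\infty-\infty$, into
\begin{equation*}
\mathcal E_{\rm diff}(u,v)-\mathcal E_{\rm diff}(\tilde u(t),\tilde v(t))\le\frac{1}{2h}\big[d^2((\tilde u,\tilde v)(t),z_*)-d^2(z,z_*)\big]+\big[\mathcal E_{\rm ext}(\tilde u,\tilde v)(t)-\mathcal E_{\rm ext}(z)\big]+\big[\mathcal E_{\rm cpl}(\tilde u,\tilde v)(t)-\mathcal E_{\rm cpl}(z)\big].
\end{equation*}
The heat flow is the right auxiliary flow because differentiating the internal energy along it yields exactly the quantity to be estimated: for $s>0$ a direct computation (the case $m=1$ is \eqref{eq:dissipation_Boltzmann_along_PME} with exponent $1$, and the case $m>1$ is the analogous differentiation of $\tfrac{1}{m-1}\int(\tilde u^m+\tilde v^m)$ along $\partial_s\tilde u=\Delta\tilde u$) gives
\begin{equation*}
\frac{\mathrm{d}}{\mathrm{d}s}\mathcal E_{\rm diff}(\tilde u,\tilde v)(s)=-\frac{4}{m}\int_{\R^d}\big(|\nabla \tilde u^{m/2}|^2+|\nabla \tilde v^{m/2}|^2\big)(s)\,\mathrm{d}x,
\end{equation*}
so the left-hand side above equals $\tfrac{4}{m}\int_0^t\!\int_{\R^d}(|\nabla\tilde u^{m/2}|^2+|\nabla\tilde v^{m/2}|^2)\,\mathrm{d}x\,\mathrm{d}s$ (the endpoint $s=0$ is harmless since $\mathcal E_{\rm diff}$ is nonincreasing along the heat flow and weakly $L^1$ lower semicontinuous, hence continuous up to $s=0$).

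The three remaining terms are handled as in the proof of Proposition~\ref{prop:discrete_Lp_propagation}. The coupling term is nonincreasing: $\partial_s\tilde\psi=(-\Delta)^{-1}\Delta(\tilde u-\tilde v)=-(\tilde u-\tilde v)$ gives $\frac{\mathrm{d}}{\mathrm{d}s}\mathcal E_{\rm cpl}(\tilde u,\tilde v)=-\|\tilde u-\tilde v\|_{L^2(\R^d)}^2\le0$ (the $H^{-1}$ contraction, cf. \eqref{eq:dissipation_Ecpl}), so its increment is $\le 0$. The external term satisfies $\frac{\mathrm{d}}{\mathrm{d}s}\mathcal E_{\rm ext}(\tilde u,\tilde v)=\int_{\R^d}(\tilde u\,\Delta U+\tilde v\,\Delta V)\le 2\lambda$ since $\tilde u,\tilde v$ are probability densities (cf. \eqref{eq:dissipation_Eext}), so its increment is $\le 2\lambda t$ — this is what produces the ``$1+\cdots$'' in the statement. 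Finally, the EVI \eqref{eq:GF_def} for the $\mathcal H$-flow gives $\tfrac{1}{2}\frac{\mathrm{d}}{\mathrm{d}s}d_W^2(S_su,u_*)\le\mathcal H(u_*)-\mathcal H(S_su)$ and likewise for $v$, so integrating on $(0,t)$ and using that $s\mapsto\mathcal H(S_su)$ is nonincreasing, the Wasserstein increment is bounded by $2t\big[(\mathcal H(u_*)-\mathcal H(S_tu))+(\mathcal H(v_*)-\mathcal H(S_tv))\big]$.

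Collecting these bounds, dividing the displayed inequality by $t$ and letting $t\searrow 0$ finishes the proof. The left-hand side has $\liminf$ at least $\tfrac{4}{m}\big(\|\nabla u^{m/2}\|_{L^2(\R^d)}^2+\|\nabla v^{m/2}\|_{L^2(\R^d)}^2\big)$: the right-hand side being finite for each $t>0$, one may pick $s_n\searrow 0$ along which $\tilde u(s_n)^{m/2},\tilde v(s_n)^{m/2}$ stay bounded in $H^1(\R^d)$ while $S_{s_n}u\to u$, $S_{s_n}v\to v$ in $L^1(\R^d)$, and weak-$H^1$ lower semicontinuity of the Dirichlet energy applies. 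The right-hand side tends to $\tfrac{1}{h}\big[(\mathcal H(u_*)-\mathcal H(u))+(\mathcal H(v_*)-\mathcal H(v))\big]+2\lambda$, using $\mathcal H(S_tu)\to\mathcal H(u)$, $\mathcal H(S_tv)\to\mathcal H(v)$ (continuity of $\mathcal H$ along the heat flow at $t=0$; $\mathcal H(u),\mathcal H(v)$ are finite by Proposition~\ref{prop:lower_bound_energy+equiintegrability} since $u,v\in\mathcal K$ have finite energy, while if $\mathcal H(u_*)$ or $\mathcal H(v_*)$ is infinite the claim is trivial). This gives
\begin{equation*}
\|\nabla u^{m/2}\|^2_{L^2(\R^d)}+\|\nabla v^{m/2}\|^2_{L^2(\R^d)}\le\frac{m}{4}\Big[2\lambda+\frac{\mathcal H(u_*)-\mathcal H(u)}{h}+\frac{\mathcal H(v_*)-\mathcal H(v)}{h}\Big],
\end{equation*}
which is \eqref{eq:discrete_gradient_np^m_estimate} with $C=\max\{m\lambda/2,\,m/4\}$, depending only on $m$ and on $\|\Delta U\|_{L^\infty(\R^d)},\|\Delta V\|_{L^\infty(\R^d)}$, hence independent of $h$ and of $z_*$.

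The main obstacle is technical rather than conceptual: making the flow-interchange manipulation rigorous up to the initial time $t=0$. Two points need care — (i) that $\mathcal E_{\rm diff}$ and $\mathcal E_{\rm cpl}$ are in fact continuous (not merely lower semicontinuous) along the heat flow as $t\searrow 0$, so the energy increments genuinely coincide with the time-integrals of the dissipations, which a priori are only locally integrable near $0$; and (ii) the weak-$H^1$ lower semicontinuity step transferring the gradient bound from $\tilde u(s_n)^{m/2},\tilde v(s_n)^{m/2}$ back to $u^{m/2},v^{m/2}$. Both are dealt with by the same approximation and lower-semicontinuity arguments already used for Proposition~\ref{prop:discrete_Lp_propagation} and in the appendix, and crucially none of this requires $h$ to be small — which is why the estimate holds for every $h>0$.
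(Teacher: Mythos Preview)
Your proof is correct and follows the same flow-interchange strategy as the paper: run the heat semigroup from the minimizer, use the EVI characterization of the heat flow as the $\mathcal H$-gradient flow to control the Wasserstein increment, bound the external and coupling energy increments exactly as you do, and then pass to the limit $t\searrow 0$.

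The only difference is in the execution of the limit step. The paper introduces the time-averaged functions $D_1(t,x)=\tfrac{1}{t}\int_0^t\tilde u^{m/2}(s,x)\,\mathrm{d}s$ (similarly $D_2$), notes that $D_1(t)\to u^{m/2}$ strongly in $L^2(\R^d)$, and applies Jensen's inequality $\|\nabla D_1(t)\|_{L^2}^2\le\tfrac{1}{t}\int_0^t\|\nabla\tilde u^{m/2}(s)\|_{L^2}^2\,\mathrm{d}s$ together with weak lower semicontinuity. You instead extract a pointwise sequence $s_n\searrow 0$ along which the instantaneous Dirichlet energies are below the time average, and then apply weak-$H^1$ lower semicontinuity along that sequence. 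Both arguments are valid; the paper's averaging trick avoids choosing a sequence and makes the identification of the weak limit a bit cleaner (strong $L^2$ convergence of $D_1(t)$ is immediate from $\tilde u\in\mathcal C([0,\infty);L^m)$), while your version requires checking that $\tilde u(s_n)^{m/2}\to u^{m/2}$ in $L^2$ in order to identify the weak $H^1$-limit, which does hold but is a small extra step you glossed over.
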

%
%
\begin{proof}
We use a second flow interchange with $\tilde u(t),\tilde{v}(t)$
now defined by
\begin{equation}
\partial_t\tilde{u}-\Delta \tilde{u}=0 \text{ in }(0,\infty)\times\R^d,\qquad \tilde{u}|_{t=0}=u \text{ in }\R^d
\label{eq:heat_u}
\end{equation}
and
\begin{equation}
\partial_t\tilde{v}-\Delta \tilde{v}=0 \text{ in }(0,\infty)\times\R^d,\qquad \tilde{v}|_{t=0}=v \text{ in }\R^d.
\label{eq:heat_v}
\end{equation}

\noindent\textbf{Step 1: dissipation inequality.} We first note that
classical properties of the heat equation and
$\tilde{u}(0),\tilde{v}(0)\in\mathcal{K}$ guarantee
$\tilde{u}(t),\tilde{v}(t)\in \mathcal{K}$ for all $t>0$. Let
$\tilde{\psi}:=G*(\tilde{u}-\tilde{v})$. Then it is easy to check
that $\partial_t\tilde{\psi}=\Delta\tilde{\psi}$ as well. Since the
pair $(u,v)$ is a minimizer and has finite energy we have in
particular $\nabla\tilde{\psi}(0)=\nabla\psi\in L^2(\R^d)$, whence by standard properties of the heat equation
$\nabla\tilde{\psi}(t)\in L^2(\R^d)$ and
$$
\frac{\mathrm{d}}{\mathrm{d}t}\big\|\nabla\tilde{\psi}(t)\big\|^2_{L^2(\R^d)}\leq
0
$$
for all $t> 0$. Since $\tilde{u},\tilde{v}$ are positive and smooth
for all $t>0$ we may differentiate
and integrate by parts as

\noindent $\mathbf{m>1}:$
\begin{align}
&\frac{\mathrm{d}}{\mathrm{d}t}\mathcal E(\tilde{u},\tilde{v})\nonumber\\
&= \frac{m}{m-1}\int_{\R^d}\big(\tilde{u}^{m-1}\Delta\tilde{u}
+\tilde{v}^{m-1}\Delta\tilde{v}\big)\,\mathrm{d}x+\int_{\R^d}
\big(U\Delta\tilde{u}+V\Delta\tilde{v}\big)\,\mathrm{d}x\nonumber\\
&\qquad+\frac{\mathrm d}{\mathrm d t}\Big(\frac{1}{2}\big\|\nabla\tilde{\psi}(t)\big\|^2_{L^2(\R^d)}\Big)\nonumber\\
&\leq
-\frac{4}{m}\int_{\R^d}\Big(|\nabla\tilde{u}^{\frac{m}{2}}|^2+|\nabla\tilde{v}^{\frac{m}{2}}|^2\Big)\mathrm{d}x
  +\int_{\R^d}\big(\tilde{u}\Delta{U}+\tilde{v}\Delta{V}\big)\,\mathrm{d}x\nonumber\\
&\leq -\frac{4}{m}\int_{\R^d}\Big(|\nabla \tilde{u}^{\frac{m}{2}}|^2
+|\nabla\tilde{v}^{\frac{m}{2}}|^2\Big)\,\mathrm{d}x+\|\Delta{U}\|_{L^{\infty}(\R^d)}+\|\Delta{V}\|_{L^{\infty}(\R^d)}
   \label{eq:energy_dissipation_flow_interchange_m>1}
 \end{align}
 and similarly\\
\noindent $\mathbf{m=1}:$
 \begin{equation}
 \frac{\mathrm{d}}{\mathrm{d}t}\mathcal E(\tilde{u},\tilde{v})
\leq -4\int_{\R^d}\Big(|\nabla\tilde{u}^{\frac{1}{2}}|^2+|\nabla
\tilde{v}^{\frac{1}{2}}|^2\Big)\,\mathrm{d}x+\|\Delta{U}\|_{L^{\infty}(\R^d)}+\|\Delta{V}\|_{L^{\infty}(\R^d)}.
\label{eq:energy_dissipation_flow_interchange_m=1}
\end{equation}
Here \eqref{eq:structural_potentials} is used in the third
inequality of both
\eqref{eq:energy_dissipation_flow_interchange_m>1} and
\eqref{eq:energy_dissipation_flow_interchange_m=1}.

For $m>1$, the above term
$\frac{4}{m}\int_{\R^d}|\nabla\tilde{\rho}^{\frac{m}{2}}|^2\mathrm{d}x$
in \eqref{eq:energy_dissipation_flow_interchange_m>1} (dissipation
of $\mathcal{E}_m(\rho)=\frac{1}{m-1}\int_{\R^d}\rho^m\,\mathrm{d}x$
along $\partial_t\rho=\Delta\rho$) corresponds to the usual Fisher
information
$-4\int_{\R^d}|\nabla\rho^{\frac12}|^2\mathrm{d}x=\frac{\mathrm{d}}{\mathrm{d}t}\mathcal{H}(\rho)$
in the linear diffusion case
\eqref{eq:energy_dissipation_flow_interchange_m=1}, and enjoys a
formal continuity when $m\searrow 1$. Comparing with
\eqref{eq:dissipation_Boltzmann_along_PME} for $p=m$ we also see
that the dissipation of $\mathcal{H}$ along the $\mathcal{E}_m$-flow
equals the dissipation of $\mathcal{E}_m$ along the
$\mathcal{H}$-flow, which is in fact the cornerstone of this flow
interchange technique.

%
Let
\begin{equation*}
\mathcal D(t):=\frac{4}{m}\left(\big\|\nabla\tilde{u}^{\frac{m}{2}}(t)\big\|_{L^2(\R^d)}^2
+\big\|\nabla \tilde{v}^{\frac{m}{2}}(t)\big\|_{L^2(\R^d)}^2\right).
\label{eq:def_D}
\end{equation*}
Integrating \eqref{eq:energy_dissipation_flow_interchange_m>1} or
\eqref{eq:energy_dissipation_flow_interchange_m=1} from $0$ to $t>0$
we get in both cases
\begin{equation}
\mathcal E(\tilde{u}(t),\tilde{v}(t))-\mathcal E(u_*,v_*) \leq
2\lambda{t}-\int_0^t\mathcal D(s)\mathrm{d}s,
\label{eq:estimate_energy_interchange}
\end{equation}
with $\lambda$ defined in \eqref{def of lambda}. Because
\eqref{eq:heat_u}-\eqref{eq:heat_v} are respective
$\mathcal{H}$-gradient flows, we again appeal to \eqref{eq:GF_def}
to obtain
\begin{align*}
\frac{1}{2}\frac{\mathrm{d}}{\mathrm{d}t}d^2(\tilde{z}(t),z_*)& =
\frac{1}{2}\frac{\mathrm{d}}{\mathrm{d}t}\big[d_W^2(\tilde{u}(t),u_*)+d_W^2(\tilde{v}(t),v_*)\big]\\
&\leq \mathcal{H}(u_*)-\mathcal{H}(\tilde{u}(t))+
\mathcal{H}(v_*)-\mathcal{H}(\tilde{v}(t)).
\end{align*}
Integrating again and using the monotonicity of $s\searrow
\mathcal{H}(\tilde{u}(s))$ and $s\searrow \mathcal{H}(\tilde{v}(s))$
along the flow with $\tilde{z}(0)=z$ gives
\begin{equation*}
\frac{1}{2h}\big[d^2(\tilde{z}(t),z_*)-d^2(z,z_*)\big]\nonumber\leq\frac{t}{h}\big[\mathcal{H}(u_*)-\mathcal{H}(\tilde{u}(t))+
\mathcal{H}(v_*)-\mathcal{H}(\tilde{v}(t))\big].
\label{eq:estimate_distance_interchange}
\end{equation*}
Since $z$ is a minimizer we have by \eqref{def: energy functional in
JKO scheme} and \eqref{eq:estimate_energy_interchange} that for
small $t>0$
\begin{align*}
0& \leq  \mathcal F_h(\tilde{z}(t)) -\mathcal F_h(z)\\
 &  \leq\frac{t}{h}\big[\mathcal{H}(u_*)-\mathcal{H}(\tilde{u}(t))
 +\mathcal{H}(v_*)-\mathcal{H}(\tilde{v}(t))\big]+2\lambda{t}-\int_0^tD(s)\,\mathrm{d}s,
\end{align*}
which we reformulate as
\begin{equation}
 \frac{1}{t}\int_0^t\mathcal D(s)\,\mathrm{d}s\leq 2\lambda+\frac{\mathcal{H}(u_*)-\mathcal{H}(\tilde{u}(t))}{h}
 +\frac{\mathcal{H}(v_*)-\mathcal{H}(\tilde{v}(t))}{h}.
 \label{eq:estimate_D(t)_interchange}
\end{equation}
\textbf{Step 2: the limit $t\to 0$.} If
$$
  D_1(t,x):=\frac{1}{t}\int_0^t\tilde{u}^{\frac{m}{2}}(s,x)\mathrm{d}s,\qquad  D_2(t,x):=\frac{1}{t}\int_0^t\tilde{v}^{\frac{m}{2}}(s,x)\mathrm{d}s,
$$
we first note that
$\tilde{u},\tilde{v}\in\mathcal{C}([0,\infty);L^m(\R^d))$ as
solutions of the heat equation with initial data in $L^m(\R^d)$, so
that $D_1,D_2\in\mathcal{C}([0,\infty);L^2(\R^d))$. As a consequence
$D_1(t)\to D_1(0)=u^{\frac{m}{2}}$ and $D_2(t)\to
D_2(0)=v^{\frac{m}{2}}$ in $L^2(\R^d)$ when $t\downarrow 0$. By
\eqref{eq:estimate_D(t)_interchange} we find that $\nabla D_1(t)$
and $\nabla D_2(t)$ are bounded in $L^2(\R^d)$ and converge at least in
$\mathcal{D}'(\R^d)$ to $\nabla\big(u^{\frac{m}{2}}\big)$ and
$\nabla\big(v^{\frac{m}{2}}\big)$ when $t\to 0$. Consequently,
$\nabla\big(u^{\frac{m}{2}}\big),\nabla\big(v^{\frac{m}{2}}\big)\in
L^2(\R^d)$ and our statement follows from
\begin{align*}
\big\|\nabla( u^{\frac{m}{2}})\big\|_{L^2(\R^d)}^2&+\big\|\nabla(v^{\frac{m}{2}})\big\|_{L^2(\R^d)}^2 \nonumber\\
&\leq \liminf\limits_{t\to 0}\big\|\nabla D_1(t)\big\|_{L^2(\R^d)}^2+\liminf\limits_{t\to 0}\big\|\nabla D_2(t)\big\|_{L^2(\R^d)}^2\\
&\leq \frac{m}{4}\liminf\limits_{t\to 0}\frac{1}{t}\int_0^t \mathcal D(s)ds\\
    &   \leq C\left(1+\frac{\mathcal{H}(u_*)-\mathcal{H}(u)}{h}+ \frac{\mathcal{H}(v_*)-\mathcal{H}(v)}{h}\right).
\end{align*}
The last inequality comes from the limit $t\searrow 0$
in \eqref{eq:estimate_D(t)_interchange} with strong convergence
$(\tilde{u}(t),\tilde{v}(t))\to (u,v)$, e.g. in $L^1\cap L^{r_0}(\R^d)$, and suitable continuity of $\rho\mapsto
\mathcal{H}(\rho)$.
\end{proof}
\section{Minimizing scheme and discrete estimates}
\label{section:minimizing_scheme}
In this section, we shall construct a family of time-discrete
approximate solutions using the JKO method, also known as the variational minimizing movement
scheme. {\it A priori} estimates for the set of discrete solutions
are necessary to allow us to deduce the existence of a time-continuous
limit curve.

 \label{section:discrete_scheme} Fix an
initial datum $z^0=(u^0,v^0)$ as in \eqref{eq:initial_data} and some
time step $h>0$. Setting $z_h^{(0)}=z^0$, Proposition
\ref{prop:discrete_scheme_well_defined} allows us to define a
sequence $z_h^{(n)}=(u_h^{(n)},v_h^{(n)}) \in \mathcal{K}\times\mathcal{K}$ recursively as
$$
z_h^{(n+1)}:= \text{ the unique minimizer } z \text{ of }\mathcal F_h\text{
with }z_*=z_h^{(n)}=\big(u_h^{(n)},v_h^{(n)}\big)
$$
and a corresponding piecewise-constant interpolation
$t\in[0,\infty)\mapsto z_h(t)$ as
\begin{equation*}
\label{eq:4.1}
z_h(t)=z_h^{(n)} \quad {\rm for} \quad nh \leq t < (n+1)h .
\end{equation*}
The rest of this section is devoted to collecting the suitable {\it a priori}
estimates on $z_h$ suitable to pass to the limit in $h\searrow 0$.

It is now standard to get the discrete energy monotonicity as in
\cite{MR1617171} that
$$
\forall\,n\geq 0:\qquad \mathcal E\big(z_h^{(n+1)}\big)\leq \mathcal E\big(z_h^{(n)}\big)
$$
inasmuch as  $z_h^{(n)}$  is a competitor in the search for
$z_h^{(n+1)}$.  At the continuous level this reads as
\begin{equation*}
 \mathcal E(z_h(t_2))\leq \mathcal E(z_h(t_1))\leq \mathcal E(z^0) \quad {\rm for \; all} \quad 0\leq t_1 \leq t_2.
 \label{eq:interpolated_energy_monotonicity}
\end{equation*}
\begin{prop}
The total square distance and approximate H\"older estimates
\begin{equation}
 \frac{1}{2h}\sum\limits_{n\geq 0}d^2\big(z_h^{(n)},z_h^{(n+1)}\big)\leq \mathcal E(z^0)-\inf\limits_{\mathcal{K}\times \mathcal{K}}
 \mathcal E,
 \label{eq:discrete_square_distance_estimate}
\end{equation}
\begin{equation}
\forall\,0\leq t_1\leq t_2,\qquad d\big(z_h(t_1),z_h(t_2)\big)\leq
C\left|t_2-t_1+h\right|^{\frac{1}{2}},
\label{eq:interpolated_approximate_Holder_estimate}
\end{equation}
hold for some $C>0$ independent of $h>0$.
\end{prop}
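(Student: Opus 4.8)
The plan is the by-now standard two-step argument for minimizing-movement schemes, exploiting only the minimality of each $z_h^{(n+1)}$ and the energy lower bound from Proposition~\ref{prop:lower_bound_energy+equiintegrability}.

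\emph{Step 1: the summed square-distance estimate \eqref{eq:discrete_square_distance_estimate}.} Fix $n\geq 0$. Since $z_h^{(n+1)}$ minimizes $\mathcal F_h(\cdot)=\frac{1}{2h}d^2(\cdot,z_h^{(n)})+\mathcal E(\cdot)$ and $z_h^{(n)}$ is itself an admissible competitor (with $d(z_h^{(n)},z_h^{(n)})=0$), one gets
$$
\frac{1}{2h}d^2\big(z_h^{(n+1)},z_h^{(n)}\big)+\mathcal E\big(z_h^{(n+1)}\big)\;\leq\;\mathcal F_h\big(z_h^{(n)}\big)\;=\;\mathcal E\big(z_h^{(n)}\big),
$$
hence $\frac{1}{2h}d^2(z_h^{(n+1)},z_h^{(n)})\leq \mathcal E(z_h^{(n)})-\mathcal E(z_h^{(n+1)})$. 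Summing this telescoping inequality over $n=0,\dots,N$ and using $z_h^{(0)}=z^0$ yields
$$
\frac{1}{2h}\sum_{n=0}^{N}d^2\big(z_h^{(n)},z_h^{(n+1)}\big)\;\leq\;\mathcal E(z^0)-\mathcal E\big(z_h^{(N+1)}\big)\;\leq\;\mathcal E(z^0)-\inf_{\mathcal K\times\mathcal K}\mathcal E,
$$
where the last step is exactly the lower bound $\inf_{\mathcal K\times\mathcal K}\mathcal E>-\infty$ guaranteed by Proposition~\ref{prop:lower_bound_energy+equiintegrability}; in particular the right-hand side is a finite constant. Letting $N\to\infty$ gives \eqref{eq:discrete_square_distance_estimate}.

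\emph{Step 2: the approximate Hölder estimate \eqref{eq:interpolated_approximate_Holder_estimate}.} Given $0\leq t_1\leq t_2$, let $n_i=\lfloor t_i/h\rfloor$ so that $z_h(t_i)=z_h^{(n_i)}$. By the triangle inequality for $d$ along the chain $z_h^{(n_1)},z_h^{(n_1+1)},\dots,z_h^{(n_2)}$, followed by the Cauchy--Schwarz inequality,
$$
d\big(z_h(t_1),z_h(t_2)\big)\;\leq\;\sum_{n=n_1}^{n_2-1}d\big(z_h^{(n)},z_h^{(n+1)}\big)\;\leq\;(n_2-n_1)^{1/2}\Big(\sum_{n=n_1}^{n_2-1}d^2\big(z_h^{(n)},z_h^{(n+1)}\big)\Big)^{1/2}.
$$
Bounding the last sum by $2h\big(\mathcal E(z^0)-\inf\mathcal E\big)$ from Step~1 and using $(n_2-n_1)h\leq t_2-t_1+h$ (since $n_2h\leq t_2$ and $n_1h> t_1-h$), we obtain
$$
d\big(z_h(t_1),z_h(t_2)\big)\;\leq\;\big(2(\mathcal E(z^0)-\inf\mathcal E)\big)^{1/2}\,\big((n_2-n_1)h\big)^{1/2}\;\leq\;C\,\big|t_2-t_1+h\big|^{1/2},
$$
with $C:=\big(2(\mathcal E(z^0)-\inf_{\mathcal K\times\mathcal K}\mathcal E)\big)^{1/2}$, which is manifestly independent of $h$.

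\emph{Expected main obstacle.} There is essentially no analytic difficulty here: the only point requiring care is that the right-hand side of \eqref{eq:discrete_square_distance_estimate} be finite and $h$-independent, which is precisely what Proposition~\ref{prop:lower_bound_energy+equiintegrability} supplies, together with the bookkeeping of the floor indices $n_i=\lfloor t_i/h\rfloor$ so that the telescoped count $(n_2-n_1)h$ is controlled by $t_2-t_1+h$ rather than $t_2-t_1$.
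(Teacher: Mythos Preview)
Your proof is correct and follows exactly the classical minimizing-movement argument that the paper invokes by reference to \cite{MR1617171}. The only point the paper makes explicit and you gloss over is that $\mathcal E(z^0)<\infty$: while Proposition~\ref{prop:lower_bound_energy+equiintegrability} gives the lower bound $\inf\mathcal E>-\infty$, finiteness of the initial energy (in particular of the coupling term $\frac12\|\nabla\psi^0\|_{L^2}^2$) requires the separate observation that $u^0,v^0\in L^1\cap L^{2d/(d+1)}\subset L^{2d/(d+2)}$, so that Proposition~\ref{prop:Poisson_IBP} applies to yield $\nabla\psi^0\in L^2(\R^d)$.
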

\begin{proof}
Note that since $u^0,v^0\in L^1(\R^d)\cap L^{2d/(d+1)}(\R^d)$ we
have in particular $u^0-v^0\in L^{2d/(d+2)}(\R^d)$. By
Proposition~\ref{prop:Poisson_IBP} we have $\nabla\psi^0=\nabla
G*(u^0-v^0)\in L^2(\R^d)$ and $u^0,v^0$ has therefore finite energy.
We also recall from Proposition
\ref{prop:lower_bound_energy+equiintegrability} that
$\inf\limits_{\mathcal{K}\times \mathcal{K}}
 \mathcal E>-\infty$, hence the right-hand side in \eqref{eq:discrete_square_distance_estimate} is finite (and of course independent of $h>0$). The rest of the argument is by now very classical and we refer to \cite{MR1617171}.
\end{proof}
\begin{prop}
The piecewise constant interpolation satisfies
\begin{equation}
\label{eq:interpolated_Lm_estimate}
\begin{aligned}
m>1:&\sup\limits_{t\geq 0}\int_{\R^d}\big(u_h^m(t)+v_h^m(t)\big)\,\mathrm{d}x  \leq C\\
m=1:&\sup\limits_{t\geq 0}\int_{\R^d}\big(u_h(t)|\log
u_h(t)|+v_h(t)|\log v_h(t)|\big)\,\mathrm{d}x  \leq C
\end{aligned}
\end{equation}
and
\begin{equation}
\label{eq:interpolated_moment_estimate}
\sup\limits_{t\geq 0}\int_{\R^d}|x|^2\big(u_h(t)+v_h(t)\big)\,\mathrm{d}x\leq
C
\end{equation}
uniformly in $h>0$.
\label{prop:interpolated_Lm_2nd_moment_estimate}
\end{prop}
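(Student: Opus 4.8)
The plan is to read off both estimates from the interpolated energy monotonicity established just above, combined with the energy lower bound of Proposition~\ref{prop:lower_bound_energy+equiintegrability}; the whole point is uniformity in the time step.

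First I would record that the fixed initial datum $z^0=(u^0,v^0)$ taken as in \eqref{eq:initial_data} has finite total energy --- this was checked in the preceding proposition using $u^0-v^0\in L^{2d/(d+2)}(\R^d)$ together with Proposition~\ref{prop:Poisson_IBP} --- so that $R:=\mathcal E(z^0)<\infty$ depends only on the initial datum, the confining potentials and $m$, and in particular \emph{not} on $h$. Next, since each $z_h^{(n)}$ is a competitor in the minimization defining $z_h^{(n+1)}$, the discrete energy is nonincreasing, and at the level of the piecewise-constant interpolant this gives $\mathcal E(z_h(t))\le R$ for every $t\ge 0$ and every $h>0$; moreover $z_h(t)\in\mathcal{K}\times\mathcal{K}$ by construction via Proposition~\ref{prop:discrete_scheme_well_defined}.

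Then, for each fixed $t\ge 0$ and $h>0$, the pair $(u_h(t),v_h(t))$ lies in the sublevelset $\{\mathcal E\le R\}$, so I would simply apply Proposition~\ref{prop:lower_bound_energy+equiintegrability}: part (ii) gives $\int_{\R^d}(u_h^m(t)+v_h^m(t))\,\mathrm{d}x\le C$ when $m>1$ and the analogous $L^1\log L^1$ bound $\int_{\R^d}(u_h(t)|\log u_h(t)|+v_h(t)|\log v_h(t)|)\,\mathrm{d}x\le C$ when $m=1$, while part (iii) gives $\int_{\R^d}|x|^2(u_h(t)+v_h(t))\,\mathrm{d}x\le C$. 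Since the constant $C$ in that proposition depends only on $R$, the potentials and $m$ --- none of these involving $h$ or $t$ --- taking the supremum over $t\ge 0$ yields \eqref{eq:interpolated_Lm_estimate} and \eqref{eq:interpolated_moment_estimate} with a constant independent of $h$, as claimed.

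There is no real obstacle here: the entire argument is the bookkeeping observation that $R=\mathcal E(z^0)$ is finite and $h$-independent --- the same quantity already bounding the right-hand side of \eqref{eq:discrete_square_distance_estimate} --- so that the $h$-independent sublevelset estimates of Proposition~\ref{prop:lower_bound_energy+equiintegrability} transfer verbatim to the interpolants $u_h,v_h$ uniformly in time. The only mild point requiring attention is to confirm, as above, that the initial energy is indeed finite under hypothesis \eqref{eq:initial_data}; everything else is a direct transfer.
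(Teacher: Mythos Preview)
Your proof is correct and follows essentially the same route as the paper: use discrete energy monotonicity $\mathcal E(z_h(t))\le \mathcal E(z^0)<\infty$ and then invoke Proposition~\ref{prop:lower_bound_energy+equiintegrability}~(ii)--(iii) on the sublevelset $\{\mathcal E\le R\}$, noting that $R=\mathcal E(z^0)$ is independent of $h$. The paper's own proof is a compressed two-line version of exactly this argument.
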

\begin{proof}
By energy monotonicity we have $\sup\limits_{n\geq
0}\mathcal E\big(u_h^{(n)},v_h^{(n)}\big)\leq \mathcal E(u^0,v^0)<\infty$, which by
Proposition~\ref{prop:lower_bound_energy+equiintegrability} bounds
the internal energy and the second moments uniformly in $h,n$ for
the discrete sequence. This property extends to the interpolation $u_h(t),v_h(t)$.
\end{proof}
In addition to the uniform control in Proposition
\ref{prop:interpolated_Lm_2nd_moment_estimate}, we also have
\begin{prop}[Continuous $L^p$ estimate] In addition to \eqref{eq:initial_data} assume that the
initial data $u^0,v^0\in L^p(\R^d)$ for some $p\in(1,\infty)$, and let $\lambda$ as in \eqref{def of lambda}.
 Then for $h<h_0(p)=\frac{1}{\lambda(p-1)}$ sufficiently small we have
$$
\forall t\geq 0:\quad
\big\|u_h(t)\big\|_{L^p(\R^d)}+\big\|v_h(t)\big\|_{L^p(\R^d)}\leq
Ce^{\lambda t}\Big(\|u^0\|_{L^p(\R^d)}+\|v^0\|_{L^p(\R^d)}\Big)
$$
for some $C>0$ independent of $t$, $p$, $h$, and the initial data.
\label{prop:continuous_Lp_propagation}
\end{prop}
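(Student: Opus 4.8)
The plan is to iterate the one-step bound \eqref{eq:discrete_Lp_propagation} along the discrete sequence $\{z_h^{(n)}\}_n$ and then transfer the resulting estimate to the piecewise-constant interpolant. Write $N_h^{(n)}:=\|u_h^{(n)}\|_{L^p(\R^d)}^p+\|v_h^{(n)}\|_{L^p(\R^d)}^p$. By \eqref{eq:initial_data} together with the extra hypothesis, $z^0=(u^0,v^0)\in\mathcal{K}\cap L^p(\R^d)$, so $N_h^{(0)}<\infty$; and for every $n$ the iterate $z_h^{(n+1)}$ is, by construction, the minimizer of $\mathcal F_h$ with $z_*=z_h^{(n)}$, hence lies in $\mathcal{K}\times\mathcal{K}$. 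Thus, whenever $N_h^{(n)}<\infty$ (so that $z_h^{(n)}\in\mathcal{K}\cap L^p$), Proposition~\ref{prop:discrete_Lp_propagation} applies with $u_*=u_h^{(n)}$, $v_*=v_h^{(n)}$ and, provided $0<h<h_0(p)$, yields
$$N_h^{(n+1)}\leq\frac{1}{1-\lambda(p-1)h}\,N_h^{(n)}.$$
By induction on $n$ this gives $N_h^{(n)}\leq(1-\lambda(p-1)h)^{-n}N_h^{(0)}$ for all $n\geq0$, and in particular $z_h^{(n)}\in L^p(\R^d)$ at every step, so the induction is self-consistent.

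Next I would pass to the interpolant. For $t\in[nh,(n+1)h)$ one has $z_h(t)=z_h^{(n)}$ and $n\leq t/h$; since $(1-\lambda(p-1)h)^{-1}\geq1$, enlarging the exponent only enlarges the bound, whence
$$\|u_h(t)\|_{L^p(\R^d)}^p+\|v_h(t)\|_{L^p(\R^d)}^p\leq(1-\lambda(p-1)h)^{-t/h}\,N_h^{(0)}.$$
It remains to absorb the geometric factor into an exponential in $t$. Since $s\mapsto-\ln(1-s)/s\to1$ as $s\to0^+$ and $\tfrac{\lambda(p-1)}{p}<\lambda$, there is a threshold $h_1(p)\in(0,h_0(p))$ such that $-\ln(1-\lambda(p-1)h)\leq p\lambda h$ for all $0<h\leq h_1(p)$; for such $h$,
$$(1-\lambda(p-1)h)^{-t/h}=\exp\!\big(\tfrac{t}{h}\,(-\ln(1-\lambda(p-1)h))\big)\leq e^{p\lambda t}.$$
Hence $\|u_h(t)\|_{L^p(\R^d)}^p+\|v_h(t)\|_{L^p(\R^d)}^p\leq e^{p\lambda t}\big(\|u^0\|_{L^p(\R^d)}^p+\|v^0\|_{L^p(\R^d)}^p\big)$, and taking $p$-th roots, together with the elementary inequalities $(a^p+b^p)^{1/p}\leq a+b\leq 2^{1-1/p}(a^p+b^p)^{1/p}\leq 2(a^p+b^p)^{1/p}$ for $a,b\geq0$, gives the claimed bound with $C=2$, independent of $t$, $p$, $h$, and the initial data.

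The argument is essentially bookkeeping; the only point that needs a little care is the last one, namely choosing $h$ small enough — beyond the constraint $h<h_0(p)$ already required by Proposition~\ref{prop:discrete_Lp_propagation} — so that the per-unit-time accumulated factor $(1-\lambda(p-1)h)^{-1/h}$ stays below $e^{p\lambda}$. This is exactly what produces the exponent $\lambda$ (rather than $2\lambda$, or $\lambda(p-1)$) after extracting the $p$-th root, and it is the reason the statement asks for ``$h<h_0(p)$ sufficiently small'' rather than merely $h<h_0(p)$.
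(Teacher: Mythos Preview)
Your proof is correct and follows essentially the same approach as the paper: iterate the one-step estimate \eqref{eq:discrete_Lp_propagation} along the discrete sequence, then convert the geometric factor $(1-\lambda(p-1)h)^{-n}$ into an exponential in $t$ for $h$ small. The paper records the intermediate rate $e^{\lambda\frac{p-1}{p}t}$ before bounding by $e^{\lambda t}$, whereas you bound $(1-\lambda(p-1)h)^{-t/h}\leq e^{p\lambda t}$ first and then take $p$-th roots; these are equivalent, and your explicit discussion of why an extra smallness threshold $h_1(p)<h_0(p)$ is needed is a useful clarification.
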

\begin{proof} Fix any $t>0$, let $k=\lfloor t/h\rfloor$, and recall that $u_h(t)=u_h^{(k)}$. By
induction we immediately get from
Proposition~\ref{prop:discrete_Lp_propagation}
$$
 \|u_h(t)\|_{L^p(\R^d)}^p+\|v_h(t)\|^p_{L^p(\R^d)}  \leq
 \Big(\frac{1}{1-\lambda(p-1)h}\Big)^{\lfloor t/h\rfloor}\big(\|u^0\|^p_{L^p(\R^d)}+\|v^0\|^p_{L^p(\R^d)}\big).
 $$
For small $h>0$ this easily gives
$$
\|u_h(t)\|_{L^p(\R^d)}+\|v_h(t)\|_{L^p(\R^d)}\leq
Ce^{\lambda\frac{p-1}{p}t}\big(\|u^0\|_{L^p(\R^d)}+\|v^0\|_{L^p(\R^d)}\big)
$$
for some universal $C>0$. Since $e^{\lambda\frac{p-1}{p}t}\leq
e^{\lambda t}$ the proof is complete.
\end{proof}
\begin{prop}[Approximate Euler-Lagrange equations]
Fix $m\geq 1$. Let $\nabla q^{(n)}$ and $\nabla
r^{(n)}$ be the optimal transport maps
$$
u_h^{(n+1)}=\big(\nabla
{q^{(n)}}\big)_{\#}u_h^{(n)}\quad\text{and}\quad
v_h^{(n+1)}=\big(\nabla {r^{(n)}}\big)_{\#}v_h^{(n)}
$$
in Brenier's Theorem~\ref{theo:Brenier}, and
$\psi_h^{(n)}=G*\left(u_h^{(n)}-v_h^{(n)}\right)$.
Then for any
vector-field $\zeta\in \mathcal{C}^{\infty}_c(\R^d;\R^d)$, we have that
\begin{align}
&\frac{1}{h}\int_{\R^d}\langle\nabla
q^{(n)}-\mathrm{Id},\zeta\circ\nabla
q^{(n)}\rangle u^{(n)}_h\mathrm{d}x\nonumber\\
&=\int_{\R^d}\big(u^{(n+1)}_h\big)^m\operatorname{div}\zeta\,\mathrm{d}x
-\int_{\R^d}u^{(n+1)}_h\langle\nabla{U},\zeta\rangle\,\mathrm{d}x
-\int_{\R^d}u_h^{(n+1)}\langle\nabla\psi_h^{(n+1)},\zeta\rangle\,\mathrm{d}x,
\label{eq:discrete_Euler-Lagrange_u}
\end{align}
and
\begin{align}
&\frac{1}{h}\int_{\R^d}\langle\nabla r^{(n)}-\mathrm{Id},\zeta\circ\nabla r^{(n)}\rangle v^{(n)}_h\mathrm{d}x \nonumber\\
&=\int_{\R^d}\big(v^{(n+1)}_h\big)^m \operatorname{div}\zeta\,\mathrm{d}x
-\int_{\R^d}v^{(n+1)}_h\langle\nabla{V},\zeta\rangle\,\mathrm{d}x
+\int_{\R^d}v_h^{(n+1)}\langle\nabla\psi_h^{(n+1)},\zeta\rangle\,\mathrm{d}x.
\label{eq:discrete_Euler-Lagrange_v}
\end{align}
\label{prop:weak_Euler-Lagrange}
\end{prop}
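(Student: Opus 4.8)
The plan is to derive the discrete Euler--Lagrange equations \eqref{eq:discrete_Euler-Lagrange_u}--\eqref{eq:discrete_Euler-Lagrange_v} by performing interior (Lagrangian) variations of the minimizer inside the JKO functional $\mathcal F_h$, following the classical strategy of \cite{MR1617171,MR1842429,MR1964483}. Fix $n$ and write $z_*=z_h^{(n)}$, $z=z_h^{(n+1)}=(u,v)$ for the minimizer. Given $\zeta\in\mathcal C^\infty_c(\R^d;\R^d)$ let $\Phi_s(x)=x+s\zeta(x)$, which is a smooth diffeomorphism of $\R^d$ for $|s|$ small, and push the first component forward: $u_s:=(\Phi_s)_\#u$, keeping the second component $v$ frozen. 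Then $z_s=(u_s,v)\in\mathcal K\times\mathcal K$ is admissible (the change of variables formula gives $u_s\in L^m$, and tightness/second moments are preserved since $\zeta$ is compactly supported), so $s\mapsto\mathcal F_h(z_s)$ has a minimum at $s=0$ and $\frac{\mathrm d}{\mathrm ds}\big|_{s=0}\mathcal F_h(z_s)=0$. The task is to compute this derivative term by term.

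First I would handle the Wasserstein term: since $\nabla q^{(n)}$ is the optimal map from $u_h^{(n)}$ to $u$, one has $d_W^2(u_s,u_h^{(n)})\le\int|\Phi_s\circ\nabla q^{(n)}-\mathrm{Id}|^2\,\mathrm du_h^{(n)}$ with equality at $s=0$, and differentiating the upper bound at $s=0$ gives exactly $\frac1h\int\langle\nabla q^{(n)}-\mathrm{Id},\,\zeta\circ\nabla q^{(n)}\rangle\,u_h^{(n)}\,\mathrm dx$ for $\frac{1}{2h}\frac{\mathrm d}{\mathrm ds}d_W^2$; this is the standard argument (see \cite{MR1617171}) that the one-sided derivative of the true distance is pinched between the derivatives of the quadratic competitor. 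Next, the diffusive energy $\mathcal E_{\rm diff}$: by the change of variables $u_s=(u\circ\Phi_s^{-1})\det D\Phi_s^{-1}$, a now-classical computation gives $\frac{\mathrm d}{\mathrm ds}\big|_{s=0}\frac{1}{m-1}\int u_s^m\,\mathrm dx=-\int u^m\,\mathrm{div}\,\zeta\,\mathrm dx$ (and the analogous identity with $u\log u$ when $m=1$, which also yields $-\int u\,\mathrm{div}\,\zeta$). The external energy contributes $\frac{\mathrm d}{\mathrm ds}\big|_{s=0}\int u_s U\,\mathrm dx=\int u\,\langle\nabla U,\zeta\rangle\,\mathrm dx$ by $\int U\,\mathrm du_s=\int U\circ\Phi_s\,\mathrm du$. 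For the coupling term, using $\mathcal E_{\rm cpl}(u,v)=\frac12\iint(u-v)(x)G(x-y)(u-v)(y)$ and freezing $v$, the $s$-derivative is $\iint\zeta(x)\cdot\nabla_x\big[G(x-y)(u-v)(y)\big]u(x)\,\mathrm dx\,\mathrm dy=\int u\,\langle\nabla\psi,\zeta\rangle\,\mathrm dx$; here one writes it first in terms of $\psi_u=G*u$ and the cross term with $\psi_v=G*v$, exactly as in the formal derivation of \eqref{eq:PNP} in Section~\ref{section:wass_grad_flow_structure}. Collecting the four contributions and setting the sum to zero yields \eqref{eq:discrete_Euler-Lagrange_u}; varying the second component instead (freezing $u$) gives \eqref{eq:discrete_Euler-Lagrange_v}, with the sign of the $\psi$ term flipped because $v$ enters $\psi=\psi_u-\psi_v$ with a minus sign.

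The main obstacle is the rigorous differentiation of the coupling term, because of the singularity of $G$ — this is precisely the delicate point the authors flag in the introduction. Naively $\nabla_x G(x-y)\sim|x-y|^{1-d}$ is not locally integrable against $u(x)u(y)$ for merely $L^m$ or $L^1\log L^1$ densities, so one cannot differentiate under the double integral without further information. This is where the regularity gained from the flow interchange enters: by Proposition~\ref{prop:discrete_Lp_propagation} (with $p$ large enough, which forces $h$ small) and Proposition~\ref{prop:discrete_gradient_np_estimate} the minimizer satisfies $u,v\in L^m\cap L^{r_0}$ with $u^{m/2},v^{m/2}\in H^1$, hence in particular $u,v\in L^{2d/(d+1)}$, which by HLS \eqref{eq:HLS_nablaG_p} gives $\nabla\psi_u=\nabla G*u\in L^{2d/(d-1)}$ and therefore $u\nabla\psi\in L^1(\R^d)$; combined with Proposition~\ref{prop:Poisson_IBP} for the integration by parts, this makes the coupling integral and its $s$-derivative absolutely convergent. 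The remaining care is a dominated-convergence/difference-quotient argument uniform in small $s$, using the compact support of $\zeta$ to control $\Phi_s$ and its Jacobian, and an approximation of $u,v$ by smooth compactly supported functions to justify the interchange of differentiation and integration in each term before passing to the limit.
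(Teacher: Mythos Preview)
Your proposal is correct and follows essentially the same route as the paper: Lagrangian perturbation $u_s=(\Phi_s)_\#u$ with $v$ frozen, classical treatment of the Wasserstein, diffusive, and potential terms, and careful differentiation of the coupling energy via the double-integral representation, justified by HLS once the minimizer is known to lie in $L^{2d/(d+1)}$. The only minor remark is that invoking Proposition~\ref{prop:discrete_gradient_np_estimate} is superfluous here---the paper uses only Proposition~\ref{prop:discrete_Lp_propagation} (propagation of the initial $L^{r_0}$ regularity with $r_0>2d/(d+1)$) to obtain $u,v\in L^{2d/(d+1)}$, which is all that is needed to make the $\nabla G$ integrals absolutely convergent and apply dominated convergence.
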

\begin{proof}
In order to simplify notations, we write below
$u_*=u^{(n)}_h$, $u=u_h^{(n+1)}$, $v_*=v^{(n)}_h$, $v=v_h^{(n+1)}$, and
$\psi=\psi_h^{(n+1)}=G*[u_h^{(n+1)}-v_h^{(n+1)}]$. Fix an arbitrary vector-field $\zeta\in
\mathcal{C}^{\infty}_c(\R^d,\R^d)$. For $\eps\in[-\delta,\delta]$,
let $\Phi_{\eps}(x)$ be the associated $\eps$-flow (i-e
$d\Phi_{\eps}/d\eps=\zeta(\Phi_{\eps})$ and $\Phi_0=\mathrm{Id}$),
and let us consider the perturbation (of domain)
$$
u_{\eps}:={(\Phi_{\eps})}_{\#}u,\qquad z_{\eps}:=(u_{\eps},v).
$$
Since $z|_{\eps=0}=z$ is a minimizer, computing the first variation
$\frac{d}{d\eps}\left(\mathcal F_h(z_{\eps})\right)_{\eps=0}=0$ will classically
give \eqref{eq:discrete_Euler-Lagrange_u}. Similarly considering
$v_{\eps}={(\Phi_{\eps})}_{\#}v$ and $z_{\eps}=(u,v_{\eps})$ will
produce \eqref{eq:discrete_Euler-Lagrange_v}.

More precisely, differentiating the Wasserstein distance squared, the confining potential, and the
diffusive energy are by now classical computations \cite{MR2401600}.
However, differentiating the coupling energy is quite delicate here: because we have
to consider separate horizontal and vertical perturbations the nonscalar nature of the
problem induces a loss of symmetry. Formally the result should follow from
$$
\int_{\R^d}|\nabla\psi_{\eps}|^2\mathrm{d}x=\int_{\R^d}\psi_{\eps}(u_{\eps}-v)\,\mathrm{d}x=\iint_{\R^d\times\R^d}
[u_{\eps}-v](x)G(x-y)[u_{\eps}-v](y)\,\mathrm{d}x\mathrm{d}y
$$
and the classical computations for interaction energies, see
\cite{MR1964483}. But because we consider two components
independently it might happen that $\nabla\psi_{\eps}\notin$
$L^2(\R^d)$ even though $\nabla\psi\in L^2(\R^d)$, and the above
integration by parts might not be legitimate. Moreover since $\nabla
G$ is more singular than $G$ itself, differentiating with respect to
$\eps$ requires some extra regularity. This can actually be made
rigorous using the propagation of the initial regularity as follows.
Since the initial datum $u^0,v^0\in L^{2d/(d+1)}(\R^d)$ and the time
step is small enough, we have by
Proposition~\ref{prop:discrete_Lp_propagation} that $u,v\in
L^1(\R^d)\cap L^{2d/(d+1)}(\R^d)$, and in particular $u,u_\eps,v\in
L^{2d/(d+2)}(\R^d)$. Using Proposition~\ref{prop:Poisson_IBP} we can
therefore integrate by parts and expand with
$u_{\eps}={(\Phi_{\eps})}_{\#}u$
\begin{align*}
&\int_{\R^d}|\nabla\psi_{\eps}|^2\mathrm{d}x
=\iint_{\R^d\times\R^d}u(x)G(\Phi_{\eps}(x)-\Phi_{\eps}(y))u(y)\,\mathrm{d}x\mathrm{d}y\\
&\qquad-2\iint_{\R^d\times\R^d}u(x)G(\Phi_{\eps}(x)-y)v(y)\,\mathrm{d}x\mathrm{d}y+\text{
terms independent of }\eps,
\end{align*}
where the last equality follows by definition of the pushforward
$u_{\eps}={(\Phi_{\eps})}_{\#}u$. In order to differentiate under
the integral sign we only need $L^1(\R^d\times\R^d)$ bounds such
that
\begin{align*}
\iint_{\R^d\times\R^d} u(x)\Big{|}\langle\nabla
G(\Phi_{\eps}(x)-\Phi_{\eps}(y)),\zeta\circ\Phi_{\eps}(x)
-\zeta\circ\Phi_{\eps}(y)\rangle\Big{|}u(y)\,\mathrm{d}x\mathrm{d}y\leq C,\\
\iint_{\R^d\times\R^d} u(x)\Big{|}\langle\nabla
G(\Phi_{\eps}(x)-y),\zeta\circ\Phi_{\eps}(x)\rangle\Big{|}v(y)\,\mathrm{d}x\mathrm{d}y\leq
C,
\end{align*}
uniformly as $\eps\to 0$. Because $\Phi_{\eps}$ is close to
$\mathrm{Id}$ for small $\eps$,
$\zeta\in\mathcal{C}^{\infty}_0(\R^d)$, and $$|\nabla G(x-y)|\leq
\frac{C}{|x-y|^{d-1}},$$ this simply amounts to controlling
\begin{align*}
\iint_{\R^d\times\R^d}u(x)\frac{1}{|x-y|^{d-1}}u(y)\,\mathrm{d}x\mathrm{d}y\leq C,\\
\iint_{\R^d\times\R^d}u(x)\frac{1}{|x-y|^{d-1}}v(y)\,\mathrm{d}x\mathrm{d}y\leq
C,
\end{align*}
which is valid by \eqref{eq:HLS_nablaG_p} with  $p=2d/(d+1)$ and $u,v\in L^{2d/(d+1)}(\R^d)$.
As a consequence we can legitimately compute with
$\Phi_{0}=\operatorname{Id}$
\begin{align*}
\frac{\mathrm{d}}{\mathrm{d}\eps}\Big(\int_{\R^d}|\nabla\psi_{\eps}|^2\mathrm{d}x\Big)_{\eps=0}
&=\iint_{\R^d\times\R^d}u(x)\langle\nabla
G(x-y),\zeta(x)-\zeta(y)\rangle{u}(y)\,\mathrm{d}x\mathrm{d}y\\
&\qquad-2\iint_{\R^d\times\R^d} u(x)\langle\nabla
G(x-y),\zeta(x)\rangle{v}(y)\,\mathrm{d}x\mathrm{d}y.
\end{align*}
Exploiting the symmetry $\nabla G(x-y)=-\nabla G(y-x)$ we finally
get
\begin{align*}
\frac{\mathrm{d}}{\mathrm{d}\eps}\Big(\frac{1}{2}\int_{\R^d}|\nabla\psi_{\eps}|^2\mathrm{d}x\Big)_{\eps=0}
&=\iint_{\R^d\times\R^d} u(x)\langle\nabla G(x-y),\zeta(x)\rangle[u-v](y)\,\mathrm{d}x\mathrm{d}y\\
  &=\int_{\R^d}u\langle\nabla\psi,\zeta\rangle\,\mathrm{d}x
\end{align*}
as in our claim, and the proof is complete.
\end{proof}
The above restriction at initial data $u^0,v^0\in
L^{2d/(d+1)}(\R^d)$, which then is inherited by the solutions to
later times, is technically essential in order to differentiate
under the integral sign with respect to $\varepsilon$-perturbations
and retrieve the discrete Euler-Lagrange equations. Actually this
restriction is not purely technical: in \eqref{eq:PNP} it seems
natural to require the terms $u\nabla\psi,v\nabla\psi$ to be at
least in $L^1(\R^d)$ at time $t=0$. If $u^0,v^0$ are both in
$L^p(\R^d)$ for some $p$ then the integrability for $\nabla\psi$
coming from \eqref{eq:HLS_nablaG_p} is $\nabla\psi\in
L^{dp/(d-p)}(\R^d)$, which is optimal since HLS inequalities are.
Solving for $p'=\frac{dp}{d-p}$ gives exactly the sharp $p=2d/(d+1)$
exponent. Technically speaking we had to assume initial
$L^{r_0}(\R^d)$ regularity with slightly better but arbitrarily
close $r_0>2d/(d+1)$. This is needed for technical compactness
issues, arising later on when we take the limit $h\to 0$ to retrieve
the weak solution $(u,v)=\lim (u_h,v_h)$.
\label{rmk:initial_integrability}

In addition to being an approximate solution in the sense of the
previous Proposition, the interpolation $(u_h,v_h)$ satisfies
\begin{cor}[continuous gradient estimate]
Fix $m\geq 1$. Then for all $0<h<T$,
\begin{equation}
\big\|\nabla(u_h)^{m/2}\big\|_{L^2(h,T;L^{2}(\R^d))}
+\big\|\nabla(v_h)^{m/2}\big\|_{L^2(h,T;L^{2}(\R^d))}\leq
C(T+1)^{\frac{1}{2}},
\label{eq:interpolated_estimate_time-space_grad_um-vm_Lq}
\end{equation}
and
\begin{equation}
\big\|\nabla(u_h)^m\big\|_{L^2(h,T;L^{1}(\R^d))}
+\big\|\nabla(v_h)^m\big\|_{L^2(h,T;L^{1}(\R^d))}\leq
C(T+1)^{\frac{1}{2}},
\label{eq:interpolated_estimate_time-space_grad_u-v_Lq}
\end{equation}
for some constant $C=C(u^0,v^0)>0$ independent of $h$.
\label{cor:interpolated_estimate_time-space_grad_np}
\end{cor}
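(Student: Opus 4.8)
The idea is to sum the discrete gradient estimate of Proposition~\ref{prop:discrete_gradient_np_estimate} along the JKO chain between time $0$ and time $T$, using that the Boltzmann entropy differences telescope, and then to deduce the $L^1$ bound from the $L^2$ one by Cauchy--Schwarz. Fix $0<h<T$ and set $N=\lceil T/h\rceil$, so that $Nh\leq 2T$. Applying \eqref{eq:discrete_gradient_np^m_estimate} with $z_*=z_h^{(n)}$ and $z=z_h^{(n+1)}$ (admissible since $z_h^{(n)}\in\mathcal{K}\times\mathcal{K}\subset\mathcal{P}\times\mathcal{P}$), multiplying by $h$, and summing over $n=0,\dots,N-1$ gives, after the entropy terms telescope,
\begin{align*}
&\sum_{k=1}^{N} h\Big(\big\|\nabla (u_h^{(k)})^{m/2}\big\|_{L^2(\R^d)}^2+\big\|\nabla (v_h^{(k)})^{m/2}\big\|_{L^2(\R^d)}^2\Big)\\
&\qquad\leq C\Big(Nh+\mathcal{H}(u^0)-\mathcal{H}(u_h^{(N)})+\mathcal{H}(v^0)-\mathcal{H}(v_h^{(N)})\Big).
\end{align*}
Since $u_h(t)=u_h^{(k)}$ on $[kh,(k+1)h)$, the left-hand side dominates $\int_h^{T}\big(\|\nabla(u_h)^{m/2}\|_{L^2(\R^d)}^2+\|\nabla(v_h)^{m/2}\|_{L^2(\R^d)}^2\big)\,\mathrm{d}t$.

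\textbf{Controlling the right-hand side.} We must bound it by $C(T+1)$ uniformly in $h$. The term $Nh\leq 2T$. The initial entropies $\mathcal{H}(u^0),\mathcal{H}(v^0)$ are finite: the positive part of $u^0\log u^0$ is controlled by $\|u^0\|_{L^{r_0}(\R^d)}$ because $r_0>1$, and the negative part by the Carleman estimate \eqref{eq:Carlemann} together with the finite second moment contained in $u^0\in\mathcal{P}(\R^d)$. For the terminal entropies only a \emph{lower} bound is needed, and \eqref{eq:Carlemann} gives $-\mathcal{H}(u_h^{(N)})\leq C\big(1+\mathfrak{m}_2(u_h^{(N)})\big)^{\alpha}$, which is bounded uniformly in $h$ and $N$ by the second-moment estimate \eqref{eq:interpolated_moment_estimate} of Proposition~\ref{prop:interpolated_Lm_2nd_moment_estimate}. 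Hence the right-hand side is $\leq C(T+1)$, and taking square roots yields \eqref{eq:interpolated_estimate_time-space_grad_um-vm_Lq}.

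\textbf{The $L^1$ bound.} For \eqref{eq:interpolated_estimate_time-space_grad_u-v_Lq}, write $\nabla(u_h^m)=2\,u_h^{m/2}\,\nabla(u_h^{m/2})$ pointwise, so Cauchy--Schwarz gives $\|\nabla(u_h^m)(t)\|_{L^1(\R^d)}\leq 2\,\|u_h^{m/2}(t)\|_{L^2(\R^d)}\,\|\nabla(u_h^{m/2})(t)\|_{L^2(\R^d)}$. Since $\|u_h^{m/2}(t)\|_{L^2(\R^d)}^2=\|u_h(t)\|_{L^m(\R^d)}^m$ is bounded uniformly in $t$ and $h$ by Proposition~\ref{prop:interpolated_Lm_2nd_moment_estimate} when $m>1$ (and equals $1$ when $m=1$), squaring and integrating over $t\in(h,T)$ reduces \eqref{eq:interpolated_estimate_time-space_grad_u-v_Lq} to \eqref{eq:interpolated_estimate_time-space_grad_um-vm_Lq}; the argument for $v_h$ is identical.

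\textbf{Main obstacle.} The only genuinely delicate point is the uniform-in-$h$ control of the telescoped entropy, i.e.\ preventing $\mathcal{H}(u_h^{(N)})$ from drifting to $-\infty$ as the chain lengthens; this is precisely where the Carleman estimate and the $h$-independent second-moment bound enter. The remaining manipulations are routine bookkeeping with the piecewise-constant interpolation.
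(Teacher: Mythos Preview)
Your proof is correct and follows essentially the same approach as the paper: sum the discrete gradient estimate of Proposition~\ref{prop:discrete_gradient_np_estimate} along the JKO chain so that the entropy terms telescope, control the terminal entropies from below via the Carleman estimate \eqref{eq:Carlemann} together with the uniform second-moment bound \eqref{eq:interpolated_moment_estimate}, and then deduce the $L^1$ estimate from the $L^2$ one by writing $\nabla(u_h^m)=2u_h^{m/2}\nabla(u_h^{m/2})$ and applying Cauchy--Schwarz with the uniform $L^{\infty}_tL^m_x$ bound. The only cosmetic differences are your choice $N=\lceil T/h\rceil$ versus the paper's $N=\lfloor T/h\rfloor$ and your explicit justification that $\mathcal{H}(u^0),\mathcal{H}(v^0)$ are finite, which the paper leaves implicit.
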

\begin{proof}
For once the argument requires no distinction between $m>1$ or
$m=1$. We only estimate the $u$ component because the computations
are identical for $v$.

Since $u_h^{m/2},\nabla u_h^{m/2}\in L^2(\R^d)$ we have
$$
\nabla(u_h)^m=2u_h^{\frac{m}{2}}\nabla\big(u_h^{\frac{m}{2}}\big)\in
L^1(\R^d).
$$
Recalling that $u_h^{m/2}(t)$ is actually bounded in $L^2(\R^d)$
uniformly in $t\geq 0$ and $h$, clearly
\eqref{eq:interpolated_estimate_time-space_grad_u-v_Lq} will follow
from \eqref{eq:interpolated_estimate_time-space_grad_um-vm_Lq} and
we only establish the latter.

For fixed $0<h<T$ let $N=\lfloor T/h\rfloor$, and recall that the
interpolation $z_h(t)$ is piecewise constant. Multiplying
\eqref{eq:discrete_gradient_np^m_estimate} by $h>0$ and summing from
$n=0$ to $n=N$ we obtain
\begin{align}
\int_h^{T}&\big\|\nabla(u_h)^{m/2}\big\|^2_{L^2(\R^d)}\mathrm{d}t\nonumber\\
&\leq\int_h^{(N+1)h}\Big\|\nabla\big(u_h(t)^{m/2}\big)\Big\|^2_{L^2(\R^d)}\mathrm{d}t=\sum\limits_{n=0}^{N-1}h\Big\|\nabla\big(u_h^{(n+1)}\big)^{m/2}\Big\|^2_{L^2(\R^d)}\nonumber\\
&\leq C\sum\limits_{n=0}^{N-1}\left(h+\mathcal{H}(u_h^{(n)})-\mathcal{H}(u_h^{(n+1)})+\mathcal{H}(v_h^{(n)})-\mathcal{H}(v_h^{(n+1)})\right)\nonumber\\
&\leq
C\left(T+\mathcal{H}(u^0)+\mathcal{H}(v^0)-\mathcal{H}\big(u_h^{(N)}\big)-\mathcal{H}\big(v_h^{(N)}\big)\right).
\label{eq:discrete_estimate_gradu_Hboltzmann}
\end{align}
By Proposition~\ref{prop:interpolated_Lm_2nd_moment_estimate} the second moments $\mathfrak{m}_2\left(u^{(n)}_h\right),\mathfrak{m}_2\left(v^{(n)}_h\right)$ are bounded uniformly in $t,h,n$,
hence by the Carleman estimate \eqref{eq:Carlemann} we see that $-\mathcal{H}\big(u_h^{(N)}\big)-\mathcal{H}\big(v_h^{(N)}\big)\leq
C$ in \eqref{eq:discrete_estimate_gradu_Hboltzmann} and the proof is
complete.
\end{proof}
%
We observe that another possible way to retrieve better gradient regularity is to
estimate
$$\left|\int_{\R^d}\operatorname{div}(\zeta)u_h^m\,\mathrm{d}x\right|+\left|\int_{\R^d}\operatorname{div}(\zeta)v_h^m\,\mathrm{d}x\right|\leq
C\|\zeta\|_{L^p(\R^d)}$$ for arbitrary vector-fields $\zeta\in
\mathcal{C}^{\infty}_c(\R^d;\R^d)$ in the Euler-Lagrange equations
\eqref{eq:discrete_Euler-Lagrange_u}-\eqref{eq:discrete_Euler-Lagrange_v},
which would estimate by duality $\nabla u_h^m,\nabla v_h^m\in
L^{p'}(\R^d)$, see e.g. \cite{MR1613500}. This approach would only
improve the previous total variation estimate if $p<\infty$, so that
$(L^{p'}(\R^d))'=L^p(\R^d)$ and $\mathcal{C}_c^{\infty}(\R^d)$ is
dense in $L^{p}$. Unfortunately we are here in a limiting situation
where essentially $u_h \nabla \psi_h,v_h \nabla \psi_h \in
L^1(\R^d)$ only, so this is not feasible. More precisely, our
assumption $u^0,v^0\in L^{r_0}(\R^d)$ with $r_0>2d/(d+1)$ in fact
does give slightly better $u_h \nabla \psi_h \in L^{1+\delta}(\R^d)$
integrability through HLS inequalities (for some $\delta>0$
depending on $r_0$). But since $r_0$ could be arbitrarily close to
the critical $2d/(d+1)$ exponent, $\delta>0$ is arbitrarily small
and we shall refrain from taking this technical path.


%
\section{Convergence to a weak solution}
\label{section:CV_to_weak_solution} This section is devoted to the
convergence of the previously approximated interpolating solution
towards the final weak solution, $(u,v)=\lim\limits_{h\to 0} (u_h,v_h)$
in some suitable topology.
 Because of the quadratic interaction term and nonlinear diffusion if $m>1$ we will need the following strong convergence

\begin{theo}[Strong convergence] There is a discrete subsequence, still denoted $h\searrow 0$, and functions $u,v$ such that
\begin{equation}
\label{eq:uh_vh->u_v_a.e.}
u_h(t,x)\to u(t,x) \text{ and } v_h(t,x)\to v(t,x)\qquad\text{a.e. in }(0,\infty)\times\R^d
\end{equation}
and
\begin{equation}
\label{eq:Lp-Lq_CV}
 \forall \,1\leq p<\infty,\,1\leq q<r_0:\qquad
 u_h,v_h\to u,v\quad \mbox{in }L^p_{loc}([0,\infty);L^q(\R^d)),
\end{equation}
where $r_0>\max\{m,2d/(d+1)\}$ is the initial integrability as in
\eqref{eq:initial_data}. \label{theo:uh->u_a.e.}
\end{theo}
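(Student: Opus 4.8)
The plan is to combine the uniform \emph{a priori} estimates gathered in Section~\ref{section:minimizing_scheme} with an extended Aubin--Lions--Simon compactness argument in which the time regularity is measured through the Wasserstein distance, in the spirit of Rossi--Savar\'e and as implemented in \cite{MR3040679,MR3044141}. Fixing $T>0$, I would first record the bounds available on $(0,T)\times\R^d$: by Proposition~\ref{prop:interpolated_Lm_2nd_moment_estimate} the densities $u_h,v_h$ are bounded in $L^\infty(0,T;L^m(\R^d))$ with uniformly bounded second moments; by Proposition~\ref{prop:continuous_Lp_propagation}, applicable since $u^0,v^0\in L^{r_0}(\R^d)$ by \eqref{eq:initial_data}, they are bounded (for $h$ small) in $L^\infty(0,T;L^{r_0}(\R^d))$ with a constant controlled by $Ce^{\lambda T}$; by Corollary~\ref{cor:interpolated_estimate_time-space_grad_np} together with the uniform $L^m$ control, $u_h^{m/2},v_h^{m/2}$ are bounded in $L^2(h,T;H^1(\R^d))\cap L^\infty(0,T;L^2(\R^d))$, the initial layer $(0,h)$ being harmless as $h\to0$; and \eqref{eq:interpolated_approximate_Holder_estimate} gives the Wasserstein equicontinuity $d_W(u_h(t+\tau),u_h(t))\le C(\tau+h)^{1/2}$ uniformly in $t\ge0$ and $h>0$, and likewise for $v_h$.

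For the compactness I would introduce, on probability densities, the functional $\mathcal{A}(\rho):=\|\nabla\rho^{m/2}\|_{L^2(\R^d)}^2+\|\rho^{m/2}\|_{L^2(\R^d)}^2+\int_{\R^d}|x|^2\rho\,\mathrm{d}x$, set to $+\infty$ otherwise, and check that its sublevels are relatively compact in $L^1(\R^d)$: if $\mathcal{A}(\rho_n)\le R$, Rellich--Kondrachov yields $w$ with $\rho_n^{m/2}\to w$ in $L^2_{loc}(\R^d)$ and a.e., the embedding $H^1(\R^d)\hookrightarrow L^{2d/(d-2)}(\R^d)$ bounds $\rho_n=(\rho_n^{m/2})^{2/m}$ in $L^{md/(d-2)}_{loc}(\R^d)$ with exponent $md/(d-2)>1$, hence $\rho_n\to w^{2/m}$ in $L^1_{loc}(\R^d)$ by Vitali's theorem, and the uniform second moment bound upgrades this to convergence in $L^1(\R^d)$. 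Taking $g:=d_W$ (extended by $+\infty$ off $\mathcal{P}$), which is lower semicontinuous for $L^1(\R^d)$ convergence and separates the points of $\mathcal{P}$, one has $\int_0^T\mathcal{A}(u_h(t))\,\mathrm{d}t\le C(T+1)$ uniformly in $h$, while the equicontinuity gives, for each fixed $\tau>0$, $\limsup_{h\to0}\int_0^{T-\tau}d_W(u_h(t+\tau),u_h(t))\,\mathrm{d}t\le CT\tau^{1/2}\to0$ as $\tau\to0$. The extended Aubin--Lions lemma then produces a subsequence, not relabelled, along which $u_h\to u$ and $v_h\to v$ in $L^1((0,T)\times\R^d)$ and, after a further extraction, a.e.\ on $(0,T)\times\R^d$; a diagonal argument over $T\in\N$ gives a single subsequence satisfying \eqref{eq:uh_vh->u_v_a.e.}.

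To upgrade this to \eqref{eq:Lp-Lq_CV}, fix $T>0$, $1\le p<\infty$ and $1\le q<r_0$. By Fubini and \eqref{eq:uh_vh->u_v_a.e.}, for a.e.\ $t\in(0,T)$ one has $u_h(t,\cdot)\to u(t,\cdot)$ a.e.\ on $\R^d$; since $\|u_h(t)\|_{L^{r_0}(\R^d)}\le Ce^{\lambda T}$ for $t\le T$, H\"older's inequality gives equi-integrability of $\{u_h(t)\}_h$ in $L^q(\R^d)$ because $q<r_0$, while the uniform second moment bound gives tightness, so Vitali's theorem yields $u_h(t)\to u(t)$ in $L^q(\R^d)$ for a.e.\ $t\in(0,T)$. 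As $\|u_h(t)-u(t)\|_{L^q(\R^d)}^p\le(2Ce^{\lambda T})^p$ is integrable in $t$ over $(0,T)$, dominated convergence gives $u_h\to u$ in $L^p(0,T;L^q(\R^d))$, and the same for $v_h$; since $T$ is arbitrary this is \eqref{eq:Lp-Lq_CV}.

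The hard part is the correct invocation of the extended Aubin--Lions lemma with $d_W$ playing the role of the time-pseudodistance: verifying its lower semicontinuity and compatibility hypotheses, proving the $L^1(\R^d)$-compactness of the sublevels of $\mathcal{A}$ uniformly in the two cases $m=1$ and $m>1$, and using the minor but essential observation that the time equi-integrability holds only in the form of a $\limsup_{h\to0}$, which is nonetheless enough. The strict inequality $r_0>\max\{m,2d/(d+1)\}$ in \eqref{eq:initial_data}, as opposed to equality, is precisely what furnishes the spatial equi-integrability used in the last step.
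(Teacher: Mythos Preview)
Your approach is correct and reaches the same conclusion, but it follows a genuinely different route from the paper. The paper does not invoke the Rossi--Savar\'e extended Aubin--Lions lemma with $d_W$ as the time pseudodistance. Instead it (i) introduces weighted Nikolsk'ii spaces $X_m\subset\subset L^{q_m}(\R^d)$ (built from translation difference quotients of $u_h^m$ and weighted moments) and proves $L^{p_m}(h,T;X_m)$ bounds on $u_h,v_h$; (ii) derives, via the optimal transport maps and Taylor expansion, an approximate $\tfrac12$-H\"older equicontinuity of $t\mapsto u_h(t)$ in a \emph{negative Sobolev} space $W^{-s,r'}(\R^d)$; and (iii) combines a refined Arzel\`a--Ascoli theorem with Simon's Lemma~9 for the triple $X_m\subset\subset L^{q_m}\subset W^{-s,r'}$ to obtain $L^{p_m}(\delta,T;L^{q_m}(\R^d))$ compactness and hence a.e.\ convergence. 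Your route is more streamlined and makes natural use of the Wasserstein equicontinuity already available from \eqref{eq:interpolated_approximate_Holder_estimate}, avoiding both the Nikolsk'ii machinery and the auxiliary $W^{-s,r'}$ estimate; the paper's route is more classical and self-contained, not depending on an abstract metric-space compactness theorem. The final upgrade to $L^p_{loc}([0,\infty);L^q(\R^d))$ for $q<r_0$ is essentially the same in both arguments: pointwise a.e.\ convergence, $L^{r_0}$ propagation for spatial equi-integrability, second-moment control for tightness, and Vitali's theorem.

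One small technical gap to close: your assertion that $\int_0^T\mathcal{A}(u_h(t))\,\mathrm{d}t\le C(T+1)$ uniformly in $h$ is not correct as written, because on $(0,h)$ one has $u_h\equiv u^0$ and nothing in \eqref{eq:initial_data} forces $\nabla(u^0)^{m/2}\in L^2(\R^d)$, so $\mathcal{A}(u^0)$ may well be $+\infty$. The fix is the one the paper also uses: run the compactness argument on $(\delta,T)$ for each fixed $\delta>0$ (where Corollary~\ref{cor:interpolated_estimate_time-space_grad_np} applies once $h<\delta$), and then perform a diagonal extraction over $\delta\searrow0$ and $T\nearrow\infty$ to obtain a single subsequence and a limit defined a.e.\ on $(0,\infty)\times\R^d$.
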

Observe in particular that $q=m$ and $q=2d/(d+1)$ are allowed in
\eqref{eq:Lp-Lq_CV}, which will be crucial in order to pass to the
limit in the Euler-Lagrange equations later on. Roughly speaking,
$L^r(\R^d)$ integrability suffices to guarantee $L^q(\R^d)$
convergence for all $q<r$. Unfortunately $q=\max\{m,2d/(d+1)\}$ is a
borderline case that we could not treat, and this is why we needed
to assume initial $L^{r_0}(\R^d)$ integrability for some slightly
better but arbitrarily close $r_0>\max\{m,2d/(d+1)\}$.

\textit{Strategy of proof:} We will use a compactness
criterion in Bochner spaces from \cite{MR916688} that involves

\noindent (i) boundedness in $L^p(0,T;X)$ for some strong $X$
topology,

\noindent (ii) compactness in $L^p(0,T;Y)$ for a weaker $Y$ space,

\noindent (iii) a target intermediate $L^p(0,T;B)$ space with
embeddings $X\subset \subset B\subset Y$.

Note that $X\subset\subset B$ will be achieved by space difference
 quotients and that the key ingredient to obtain boundedness of $\{u_h,v_h\}_h$
in the strong $X$ topology is the gradient estimate from
Corollary~\ref{cor:interpolated_estimate_time-space_grad_np}.
Compactness in $L^p(0,T;Y)$ will be ensured by an approximated time
equi-continuity in some suitable $W^{-s,r'}(\R^d)$ space.\\

 We first collect
some technical results and then establish Theorem
\ref{theo:uh->u_a.e.}. To begin with, for $m>1$, we set
\begin{equation}
\quad q_m:=1+\frac{1}{m'}=1+\frac{m-1}{m}\in (1,m).
 \label{eq:def_q}
\end{equation}
Then there exists $\theta_m\in (0,1)$ satisfying
\begin{equation}
\frac{1}{q_m}=(1-\theta_m)\frac{1}{1}+\theta_m\frac{1}{m},
\label{eq:def_theta*}
\end{equation}
and we let
\begin{equation}
\label{eq:def_p}
p_m:=\frac{2m}{\theta_m}>1.
\end{equation}
If $\tau_e$ denotes the usual shift operator in space
$$
e\in \R^d:\qquad\tau_e w(x):=w(x-e),
$$
we also define the weighted Nikolsk'ii spaces
\begin{equation*}
\label{eq:def_Nikolskii_X}
X_m:=\Big\{w\in L^{q_m}(\R^d):\,\sup\limits_{e\in
\R^d}\|\tau_ew-w\|_{L^{q_m}}|e|^{-\frac{\theta_m}{m}}<\infty,
\;\int_{\R^d}|x|^{\frac{2}{m'}}|w|^{q_m}\mathrm{d}x<\infty
\Big\}
\end{equation*}
endowed with their natural Banach norms with $\theta_m/m<1$. By the
Riesz-Fr\'echet-Kolmogorov Theorem we have
$$
X_m\subset\subset L^{q_m}(\R^d).
$$
We note that the above choice for $p=p_m,q=q_m,\theta=\theta_m$ is purely
technical so we shall go as little as possible into details
regarding their explicit values.

For the case $m=1$ one should similarly use
$$
  X_1:=\Big\{w\in L^1(\R^d):\,\nabla{w}\in L^1(\R^d),\,\int_{\R^d}|x|^2|w|\,\mathrm{d}x<\infty
  \Big\}\subset\subset L^1(\R^d).
$$
Since the related argument is fairly easy compared to the
nonlinear case, in what follows we will omit the related proof and
focus on the nonlinear diffusion case and henceforth we assume
$m>1$.
%
Compactness in space will be ensured by
\begin{prop}[Compactness in space] Let $p,q,\theta,X$ as in \eqref{eq:def_q}-\eqref{eq:def_Nikolskii_X},
and fix $T>0$. Then for $h>0$ small enough we have
$$
\|u_h\|_{L^p(h,T;X_m)}+\|v_h\|_{L^p(h,T;X_m)}\leq C_T
$$
uniformly in $h$. \label{prop:Nikolskii_X_bound}
\end{prop}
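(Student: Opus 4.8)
The plan is to decompose the desired estimate into three pieces matching the definition of the weighted Nikolskii space $X_m$: an $L^{q_m}$-in-space bound, the space-difference-quotient (Nikolskii) seminorm bound, and the weighted moment bound, each integrated against $\mathrm{d}t$ on $(h,T)$ with the right time-exponent $p_m$. The two non-trivial ingredients are the interpolation inequality that converts the $L^2$-in-time control of $\nabla(u_h)^{m/2}$ from Corollary~\ref{cor:interpolated_estimate_time-space_grad_np} together with the uniform $L^m$-in-space control from Proposition~\ref{prop:interpolated_Lm_2nd_moment_estimate} into a space-translation estimate in $L^{q_m}$, and the bookkeeping of H\"older exponents that forces the exact values $q_m,\theta_m,p_m$ chosen in \eqref{eq:def_q}--\eqref{eq:def_p}.

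\textbf{Step 1: pointwise-in-time translation estimate.} First I would fix $t$ (with $h\le t\le T$) and work with $w=u_h(t)$, which satisfies $\|w\|_{L^m(\R^d)}\le C$ uniformly and $\|\nabla(w^{m/2})\|_{L^2(\R^d)}=:g(t)$ with $g\in L^2(h,T)$ bounded uniformly in $h$ by Corollary~\ref{cor:interpolated_estimate_time-space_grad_np}. Writing $\tau_e w - w$ and using that $w^{m/2}\in H^1(\R^d)$, the elementary estimate $\|\tau_e\phi-\phi\|_{L^2}\le |e|\,\|\nabla\phi\|_{L^2}$ gives $\|\tau_e(w^{m/2})-w^{m/2}\|_{L^2}\le |e|\,g(t)$. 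Then the pointwise bound $|a^{1/\alpha}-b^{1/\alpha}|\le C|a-b|^{\min(1,1/\alpha)}(\dots)$ combined with H\"older converts a translation estimate on $w^{m/2}$ into one on $w$; more precisely one interpolates: $\|\tau_ew-w\|_{L^{q_m}}$ is controlled by a power of $\|\tau_e(w^{m/2})-w^{m/2}\|_{L^2}^{2/m}$ times a power of $\|w\|_{L^m}$, the exponents being exactly those recorded in \eqref{eq:def_theta*}. This yields $\|\tau_ew-w\|_{L^{q_m}}\le C\,|e|^{\theta_m/m}\,g(t)^{\theta_m/m}$ (up to the uniform $L^m$ factor), so the Nikolskii seminorm of $u_h(t)$ is bounded by $C\,g(t)^{\theta_m/m}$.

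\textbf{Step 2: integrate in time.} Raising to the power $p_m=2m/\theta_m$ and integrating over $(h,T)$, the seminorm contributes $\int_h^T g(t)^{p_m\theta_m/m}\,\mathrm{d}t=\int_h^T g(t)^2\,\mathrm{d}t\le C_T$ by Corollary~\ref{cor:interpolated_estimate_time-space_grad_np} — this is precisely why $p_m$ was defined as $2m/\theta_m$. The $L^{q_m}(\R^d)$-norm term is handled by interpolating $L^{q_m}$ between $L^1$ and $L^m$ (same exponents $\theta_m$), using the uniform mass bound $\|u_h(t)\|_{L^1}=1$ and $\|u_h(t)\|_{L^m}\le C$ from Proposition~\ref{prop:interpolated_Lm_2nd_moment_estimate}, so this term is bounded uniformly in $t$ and $h$ and integrates to $C_T$ trivially. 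For the weighted-moment term $\int_{\R^d}|x|^{2/m'}|u_h(t)|^{q_m}\mathrm{d}x$, I would again interpolate: $|x|^{2/m'}|w|^{q_m}=(|x|^2|w|)^{1-\theta_m}\,(|w|^m)^{\theta_m}\cdot(\text{check exponents})$, so by H\"older this is controlled by $\mathfrak{m}_2(u_h(t))^{1-\theta_m}\|u_h(t)\|_{L^m}^{m\theta_m}$, both factors bounded uniformly in $t,h$ by Proposition~\ref{prop:interpolated_Lm_2nd_moment_estimate}; integrating in $t$ over the bounded interval $(h,T)$ gives $C_T$. Summing the three contributions and doing the identical computation for $v_h$ completes the proof.

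\textbf{The main obstacle} I expect is Step 1: getting the translation estimate for $u_h$ itself in $L^{q_m}$ out of the $H^1$ control of $u_h^{m/2}$ requires choosing the interpolation exponents so that everything closes, and the exponents are rigid — $q_m$, $\theta_m$ and $p_m$ are exactly pinned down by \eqref{eq:def_q}--\eqref{eq:def_p} so there is no slack. One must be careful that the nonlinear change of variables $w\mapsto w^{m/2}$ interacts correctly with translations (for $m\ge 2$ the map $s\mapsto s^{2/m}$ is H\"older with exponent $2/m\le 1$, which is the favorable case; for $1<m<2$ one instead uses that $s\mapsto s^{2/m}$ is convex and handles the translation estimate via the gradient of $w$ directly, or bounds $|w^{2/m}(x)-w^{2/m}(y)|$ using $\max(w(x),w(y))^{2/m-1}|w(x)-w(y)|$ and absorbs the extra power by H\"older against the $L^m$ norm). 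Once the single-time inequality $\|\tau_e u_h(t)-u_h(t)\|_{L^{q_m}}\le C|e|^{\theta_m/m} g(t)^{\theta_m/m}$ is in hand with $g\in L^2(h,T)$ uniformly, the rest is the routine exponent arithmetic sketched above.
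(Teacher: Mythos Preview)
Your overall strategy is correct and matches the paper's: decompose the $X_m$-norm into its three pieces, bound each pointwise in $t$ by a constant plus a power of the gradient quantity, then integrate to the $p_m$-th power so that $p_m\cdot\theta_m/m=2$ lands exactly on the $L^2$-in-time control from Corollary~\ref{cor:interpolated_estimate_time-space_grad_np}. The weighted-moment and $L^{q_m}$ bounds are handled just as you say (though your tentative exponents $(1-\theta_m,\theta_m)$ in the moment term should be $(1/m',1/m)$; the paper writes $\int|x|^{2/m'}u_h^{q_m}=\int(u_h|x|^2)^{1/m'}u_h$ and applies H\"older directly).

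The one place where the paper is cleaner is precisely the step you flag as the main obstacle. Instead of converting the $H^1$ control of $u_h^{m/2}$ back to $u_h$ via H\"older/Lipschitz properties of $s\mapsto s^{2/m}$ (with your $m\ge 2$ vs.\ $1<m<2$ case split), the paper uses the elementary convexity inequality $|a-b|^m\le |a^m-b^m|$ for $a,b\ge 0$, $m\ge 1$. This gives in one stroke
\[
\|\tau_e u_h(t)-u_h(t)\|_{L^m}^m\le \int_{\R^d}|\tau_e u_h^m(t)-u_h^m(t)|\,\mathrm{d}x\le |e|\,\|\nabla u_h^m(t)\|_{L^1},
\]
and then interpolation with the trivial bound $\|\tau_e u_h(t)-u_h(t)\|_{L^1}\le 2$ yields the Nikolskii seminorm estimate with no case distinction. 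Since $\|\nabla u_h^m\|_{L^1}\le 2\|u_h^{m/2}\|_{L^2}\|\nabla u_h^{m/2}\|_{L^2}\le C\,g(t)$, this is equivalent to your target inequality but avoids the nonlinear conversion altogether.
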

%
\begin{proof}
For simplicity we write here $p=p_m,q=q_m,\theta=\theta_m, X=X_m$.
We first claim that
\begin{equation}
 \|u_h(t)\|_{X}\leq C\Big(1+\|\nabla
u_h^m(t)\|^{\theta/m}_{L^1(\R^d)}\Big), \ \text{for} \ t \geq h.
\label{eq:pointwise_X_estimate}
\end{equation}
Indeed since $q\in (1,m)$, it follows immediately from
\eqref{eq:interpolated_Lm_estimate} that
$$
\|u_h\|_{L^{\infty}(0,\infty;L^q(\R^d))}\leq C.
$$
Using \eqref{eq:interpolated_Lm_estimate},
\eqref{eq:interpolated_moment_estimate}, and H\"older inequality we
estimate with $q=q_m=1+1/m'$
\begin{align*}
\int_{\R^d}|x|^{\frac{2}{m'}}u_h^q(t)\,\mathrm{d}x
&=\int_{\R^d}\left(u_h(t)|x|^2\right)^{\frac{1}{m'}}u_h(t)\,\mathrm{d}x\\
& \leq \Big(\int_{\R^d}
|x|^2u_h(t)\,\mathrm{d}x\Big)^{\frac{1}{m'}}\|u_h(t)\|_{L^m(\R^d)}\leq
C.
\end{align*}
Fixing $e\in \R^d$ and using the convexity inequality $|a-b|^m\leq
\big| |a|^m-|b|^m\big|$ for $a,b\geq 0$, we get
\begin{align*}
\big\|\tau_e u_h(t)-u_h(t)\big\|_{L^m(\R^d)}
&=\Big(\int_{\R^d}|\tau_e u_h(t)-u_h(t)|^m\mathrm{d}x\Big)^{\frac{1}{m}}\\
&\leq\Big(\int_{\R^d}|\tau_eu_h^m(t)-u_h^m(t)|\,\mathrm{d}x\Big)^{\frac{1}{m}}\\
&\leq|e|^{1/m}\big\|\nabla u_h^m(t)\big\|_{L^1(\R^d)}^{1/m}.
\end{align*}
By \eqref{eq:def_theta*} and
$\|\tau_eu_h(t)-u_h(t)\|_{L^1(\R^d)}\leq 2$ we get by interpolation
\begin{align*}
\big\|\tau_e u_h(t)-u_h(t)\big\|_{L^q(\R^d)}
&\leq\big\|\tau_e u_h(t)-u_h(t)\big\|_{L^1(\R^d)}^{1-\theta}\cdot\big\|\tau_e u_h(t)-u_h(t)\big\|_{L^m(\R^d)}^{\theta}\\
  & \leq 2|e|^{\frac{\theta}{m}}\big\|\nabla u_h^m(t)\big\|_{L^1(\R^d)}^{\frac{\theta}{m}},
\end{align*}
thus \eqref{eq:pointwise_X_estimate} holds as claimed.

Taking now the $L^p(h,T)$ norm with $p=p_m=2m/\theta$ in
\eqref{eq:pointwise_X_estimate} and using
Corollary~\ref{cor:interpolated_estimate_time-space_grad_np} finally
leads to
$$
\|u_h\|_{L^p(h,T;X)}\leq C\left(T+\big\|\nabla
u_h^m\big\|^{2/p}_{L^2(h,T;L^1(\R^d))}\right)\leq C_T.
$$
The estimate for the $v$ component is again identical.
\end{proof}
Next, we turn to compactness in time in a weaker topology. We first
have
\begin{prop}[Time-equicontinuity in $W^{-s,r'}(\mathbb{R}^d)$]
Let $s,r>0$ be large enough so that
\begin{equation*}
W^{s,r}(\R^d)\subset W^{1,2m'}(\R^d)\cap W^{2,\infty}(\R^d).
\end{equation*}
Then
 \begin{equation}
 \big\|u_h(t_2)-u_h(t_1)\big\|_{W^{-s,r'}}+\big\|v_h(t_2)-v_h(t_1)\big\|_{W^{-s,r'}}\leq C\sqrt{|t_2-t_1|+h}, \ 0\leq t_1\leq t_2,
  \label{eq:1/2_Holder_H-alpha}
 \end{equation}
for some $C=C_{s,r}>0$ independent of $t_1,t_2$ and $h$.
\label{prop:1/2_Holder_H-alpha}
\end{prop}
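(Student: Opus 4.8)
The plan is to estimate the time increment $u_h(t_2)-u_h(t_1)$ by telescoping through the discrete steps and controlling each step $u_h^{(n+1)}-u_h^{(n)}$ in the dual Sobolev norm $W^{-s,r'}(\R^d)$ using the approximate Euler--Lagrange equations from Proposition~\ref{prop:weak_Euler-Lagrange}. Fix a test function $\varphi\in W^{s,r}(\R^d)$; the Sobolev embedding $W^{s,r}\subset W^{1,2m'}\cap W^{2,\infty}$ gives control on $\nabla\varphi$ in $L^{2m'}$ and $L^\infty$ and on $D^2\varphi$ in $L^\infty$. First I would test \eqref{eq:discrete_Euler-Lagrange_u} with $\zeta=\nabla\varphi$ so that the left-hand side becomes (a second-order Taylor remainder aside) $\int_{\R^d}(u_h^{(n+1)}-u_h^{(n)})\varphi\,\mathrm dx$ plus an error term $O\big(\|D^2\varphi\|_{L^\infty}\int|\nabla q^{(n)}-\mathrm{Id}|^2 u_h^{(n)}\,\mathrm dx\big)$; the remaining three terms on the right are $\int (u_h^{(n+1)})^m\Delta\varphi$, $\int u_h^{(n+1)}\langle\nabla U,\nabla\varphi\rangle$, and $\int u_h^{(n+1)}\langle\nabla\psi_h^{(n+1)},\nabla\varphi\rangle$.

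Next I would bound each right-hand term by $C\,h\,\|\varphi\|_{W^{s,r}}$ times a quantity that is summable over $n$ with a bound $\lesssim (t_2-t_1+h)$ after multiplying by $h$. Concretely: the diffusion term is $\leq \|\Delta\varphi\|_{L^\infty}\|u_h^{(n+1)}\|_{L^m}^m$, uniformly bounded by \eqref{eq:interpolated_Lm_estimate}; the potential term is $\leq \|\nabla\varphi\|_{L^\infty}\int u_h^{(n+1)}|\nabla U|\,\mathrm dx$, controlled by the second-moment bound \eqref{eq:interpolated_moment_estimate} together with the linear growth $|\nabla U|\leq C_3(1+|x|)$ in \eqref{eq:structural_potentials}; the coupling term $\int u_h^{(n+1)}\langle\nabla\psi_h^{(n+1)},\nabla\varphi\rangle$ is $\leq \|\nabla\varphi\|_{L^{2m'}}\|u_h^{(n+1)}\nabla\psi_h^{(n+1)}\|_{L^{(2m')'}}$, and since $u_h,v_h$ are bounded in $L^{2d/(d+1)}(\R^d)$ by Proposition~\ref{prop:discrete_Lp_propagation} (propagated from the initial data), $\nabla\psi_h\in L^2(\R^d)$ by Proposition~\ref{prop:Poisson_IBP} and $u_h\in L^m\cap L^{2d/(d+1)}$, so H\"older gives a uniform bound on $u_h^{(n+1)}\nabla\psi_h^{(n+1)}$ in the appropriate Lebesgue space (this is exactly where $W^{s,r}\subset W^{1,2m'}$ enters). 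The Taylor-remainder error term is handled by $\int|\nabla q^{(n)}-\mathrm{Id}|^2u_h^{(n)}\,\mathrm dx=d_W^2(u_h^{(n)},u_h^{(n+1)})$, which is summable over $n$ with bound $\lesssim h$ by \eqref{eq:discrete_square_distance_estimate}.

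Then I would sum from $n=n_1$ to $n_2-1$ (with $n_i h\leq t_i<(n_i+1)h$): the telescoping left side gives $\int(u_h(t_2)-u_h(t_1))\varphi\,\mathrm dx$ up to boundary adjustments of size $\lesssim h$, while the right side is bounded by $\|\varphi\|_{W^{s,r}}\big(C\sum_{n} h + C\sum_n d_W^2(u_h^{(n)},u_h^{(n+1)})\big)\leq C\|\varphi\|_{W^{s,r}}(t_2-t_1+h)$. Taking the supremum over $\|\varphi\|_{W^{s,r}}\leq 1$ yields the estimate with $|t_2-t_1+h|$ rather than its square root; to get the stated $\sqrt{\cdot}$ bound, which is what is actually needed, one interpolates this Lipschitz-type estimate against the trivial uniform $W^{-s,r'}$ bound $\|u_h(t_2)-u_h(t_1)\|_{W^{-s,r'}}\leq \|u_h(t_2)\|_{L^1}+\|u_h(t_1)\|_{L^1}+\cdots\leq C$ — or, more cleanly, one directly uses the discrete square-distance estimate as follows: each single-step increment is controlled by $C h$ in one argument and by $C\,d_W(u_h^{(n)},u_h^{(n+1)})$ in another (Wasserstein-to-$W^{-s,r'}$ comparison), and then Cauchy--Schwarz on $\sum d_W \leq \sqrt{N}\,\sqrt{\sum d_W^2}\leq \sqrt{(t_2-t_1)/h}\cdot\sqrt{Ch}=C\sqrt{t_2-t_1}$ produces the half-H\"older exponent. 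I expect the main obstacle to be the coupling term: one must verify carefully that the propagated $L^{2d/(d+1)}$ regularity of $u_h,v_h$, combined with the HLS bound \eqref{eq:HLS_nablaG_p}, is exactly strong enough to put $u_h\nabla\psi_h$ in a Lebesgue space dual to $W^{1,2m'}$, and to check that the parameters $s,r$ — already chosen so that $W^{s,r}\subset W^{1,2m'}\cap W^{2,\infty}$ — indeed make every pairing above finite; the rest is routine telescoping and summation.
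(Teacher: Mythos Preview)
Your primary route through the Euler--Lagrange equations runs into a real obstruction at the coupling term. To place $u_h^{(n+1)}\nabla\psi_h^{(n+1)}$ in $L^{(2m')'}=L^{2m/(m+1)}$ via H\"older with $\nabla\psi_h\in L^2$ would force $u_h\in L^{2m}$, which the energy bounds do not give (only $L^m$ uniformly in time). Falling back on $\nabla\varphi\in L^\infty$ and $u_h\nabla\psi_h\in L^1$ requires the propagated $L^{2d/(d+1)}$ regularity of Proposition~\ref{prop:continuous_Lp_propagation}, whose constant grows like $e^{\lambda t}$; summing over $n$ then yields a bound depending on $t_2$, not the time-independent $C$ the statement demands. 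Your ``interpolation'' to downgrade Lipschitz to half-H\"older is not interpolation but a casework argument (small vs.\ large increments), and it only succeeds if the Lipschitz constant is uniform---which, by the above, it is not.

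The paper avoids all of this by \emph{not} invoking the Euler--Lagrange equation. It takes a single optimal map $\nabla q$ pushing $u_h^{(N_1)}$ to $u_h^{(N_2)}$, Taylor-expands $\xi\circ\nabla q-\xi$, and bounds the first-order piece directly by Cauchy--Schwarz against the measure $u_h^{(N_1)}\mathrm dx$:
\[
\Big|\int_{\R^d}\langle\nabla q-\mathrm{Id},\nabla\xi\circ\nabla q\rangle\, u_h^{(N_1)}\,\mathrm dx\Big|
\leq d_W\big(u_h^{(N_1)},u_h^{(N_2)}\big)\Big(\int_{\R^d}|\nabla\xi|^2 u_h^{(N_2)}\,\mathrm dx\Big)^{1/2}.
\]
The second factor is $\leq \|u_h^{(N_2)}\|_{L^m}^{1/2}\|\nabla\xi\|_{L^{2m'}}$ by H\"older---this is where $W^{s,r}\subset W^{1,2m'}$ actually enters, and the $L^m$ norm is controlled uniformly in time by energy monotonicity. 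The first factor is handled by the triangle inequality $d_W(u_h^{(N_1)},u_h^{(N_2)})\leq\sum_{n=N_1}^{N_2-1} d_W(u_h^{(n)},u_h^{(n+1)})$ and then Cauchy--Schwarz together with \eqref{eq:discrete_square_distance_estimate}, giving $C\sqrt{|t_2-t_1|+h}$. The second-order Taylor remainder is $O(\|D^2\xi\|_{L^\infty}\,d_W^2)$, which is lower order. No diffusion, potential, or coupling integrals ever appear. Your brief fallback ``Wasserstein-to-$W^{-s,r'}$ comparison'' at the very end is precisely this argument; it is the correct main line, and it should replace rather than supplement the Euler--Lagrange route.
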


\begin{proof}
The argument is very similar to \cite[Lemma 13]{MR3040679},
beginning with the calculation of the approximate 1/2-H\"older
continuity of the sequence  $\{u_h(t)\}$. For indices  $0< n < n'$ let
$\nabla q$  denote the optimal map from $u_h^{(n)}$ to $u_h^{(n')}$
so that   $u_h^{(n')} = (\nabla q)_{\#}u_h^{(n)}$. Then
\begin{equation*}
\label{eq:5.2}
\int_{\R^d}\left(u_h^{(n')} - u_h^{(n)}\right)\xi \mathrm{d}x =
\int_{\R^d} \left(\xi(\nabla q(x)) - \xi(x)\right)u_h^{(n)}(x)
\mathrm{d}x, \ \xi\in\mathcal{C}^{\infty}_c(\R^d).
\end{equation*}
Expanding the integrand on the right,
\begin{align*}
\xi(x)-\xi(\nabla q(x))&=\nabla\xi(\nabla q(x))\cdot(x-\nabla q(x))\\
&\qquad+\mathcal{O}\big(|x-\nabla
q(x)|^2\|\nabla^2\xi\|_{L^\infty(\R^d)}\big),
\end{align*}
Hence
\begin{align*}
\int_{\R^d} \left( u^{(n')}_h-u^{(n)}_h \right) \xi\,\mathrm{d}x
  & = \int_{\R^d} \left[\xi\circ\nabla q -\xi\right]u_h^{(n)}\,\mathrm{d}x\nonumber\\
&=\int_{\R^d}\langle\nabla q -\mathrm{Id},\nabla\xi\circ\nabla
q\rangle u^{(n)}_h\,\mathrm{d}x\nonumber\\
&\qquad+\mathcal{O}\big(\|\nabla^2\xi\|_{L^\infty(\R^d)}\big)\int_{\R^d}|\mathrm{Id}-\nabla q|^2 u^{(n)}_h\mathrm{d}x\nonumber\\
& =\int_{\R^d}\langle\nabla q-\mathrm{Id},\nabla\xi\circ\nabla q\rangle{u}^{(n)}_h\,\mathrm{d}x\nonumber\\
&\qquad+\mathcal{O}\left(\| \nabla^2\xi\|_{L^\infty(\R^d)}d_W\big(u^{(n')}_h,u^{(n)}_h\big)^2\right).
\end{align*}
We further compute by Cauchy-Schwarz and H\"older inequalities
\begin{align*}
&\left|\int_{\R^d} \langle\nabla q -\mathrm{Id},\nabla\xi\circ\nabla q\rangle u^{(n)}_h\,\mathrm{d}x \right| \\
&\leq \left(\int_{\R^d} \left|\nabla q-\mathrm{Id}\right|^2
u^{(n)}_h\,\mathrm{d}x\right)^{\frac{1}{2}}\cdot
\left(\int_{\R^d}\left|\nabla\xi\circ\nabla q\right|^2u^{(n)}_h\,\mathrm{d}x\right)^{\frac{1}{2}}\\
&\leq d_W\big(u_h^{(n)},u_h^{(n')}\big)
\Big( \int_{\R^d} | \nabla \xi|^2  u_h^{(n')} \, \mathrm{d}x\Big)^{\frac{1}{2}}\\
&\leq
d_W\big(u^{(n)}_h,u^{(n')}_h\big)\big\|u_h^{(n')}\big\|_{L^m(\R^d)}^{\frac{1}{2}}
\big\| |\nabla\xi|^2\big\|^{\frac{1}{2}}_{L^{m'}(\R^d)}\nonumber\\
& \leq  C \,d_W\big(u^{(n)}_h,u^{(n')}_h\big)||
\nabla\xi||_{L^{2m'}(\R^d)},
\end{align*}
Given $0 < t_1 < t_2$  and $N_1 =\lfloor t_1/h\rfloor , N_2 =\lfloor t_2/h\rfloor$, from \eqref{eq:discrete_square_distance_estimate} and the Cauchy Schwarz inequality we get
\begin{equation*}
 d_W(u^{N_1}_h,u^{N_2}_h) \leq \sum\limits_{n=N_1}^{N_2-1}d_W\big(u^{(n)}_h,u^{(n+1)}_h\big)
 \leq C\sqrt{|t_2-t_1|+h},
 \label{eq:discrete one half Holder}
\end{equation*}
and then
\begin{align*}
& \left|\int_{\R^d}\big(u_h(t_2)-u_h(t_1)\big)\xi\,\mathrm{d}x\right|\\
  &\qquad \leq C\left(\|\nabla\xi\|_{L^{2m'}(\R^d)}\sqrt{|t_2-t_1|+h}+\| \nabla^2\xi\|_{L^\infty(\R^d)}h\right).
\end{align*}
With our choice $W^{s,r}\subset W^{1,2m'}\cap W^{2,\infty}$ and
because $h$ is small we finally obtain
\begin{align*}
\left|\int_{\R^d}\big(u_h(t_2)-u_h(t_1)\big)\xi\,\mathrm{d}x\right|
&\leq C\left(\sqrt{|t_2-t_1|+h}+h\right)\|\xi\|_{W^{s,r}}\\
&\leq C\sqrt{|t_2-t_1|+h}\|\xi\|_{W^{s,r}}.
\end{align*}
Our statement follows by density of $\mathcal{C}^{\infty}_c(\R^d)$ in
$W^{s,r}(\R^d)$ and duality $\left(W^{s,r}(\R^d)\right)'=W^{-s,r'}(\R^d)$.

\end{proof}
We are now in position to prove the desired convergence when $h\to
0$:
\begin{proof}[Proof of Theorem~\ref{theo:uh->u_a.e.}]
Once again we only establish the result for the $u$ component. Fix
any $0<\delta<T$ and let $q=q_m,\theta=\theta_m,p=p_m,X=X_m$ as in
\eqref{eq:def_q}-\eqref{eq:def_Nikolskii_X}. Taking $s,r$ large
enough such that
$$
W^{s,r}(\R^d)\subset\subset L^{m'}_{loc}(\R^d)
$$
is compact. By truncation, a standard duality argument then ensures
that
$$
L^{m}(\R^d)\cap L^1(\R^d)((1+|x|^2)\mathrm{d}x)\subset\subset
W^{-s,r'}(\R^d)
$$
is also compact. By Proposition
\ref{prop:interpolated_Lm_2nd_moment_estimate}, we see that there is
a fixed $W^{-s,r'}(\R^d)$-relatively compact set $K$ such that
$u_h(t)\in K$ for all $t\geq 0$ and small $h>0$. Therefore, we infer
from Proposition~\ref{prop:1/2_Holder_H-alpha} that a refined
version of Arzel\`a-Ascoli Theorem
 \cite[Proposition 3.3.1]{MR2401600} can be applied to conclude that
there exists $u\in\mathcal{C}([0,T];W^{-s,r'}(\R^d))$ such that
$$
\forall \,t\in[0,T],\qquad u_h(t)\to u(t)\qquad\text{in } \;\;
W^{-s,r'}(\R^d)
$$
for some (discrete) subsequence $h\searrow 0$, not relabeled here
for simplicity.
This pointwise convergence together with the uniform
$L^{m}(\R^d)\cap L^1(\R^d,(1+|x|^2)\mathrm{d}x)$ bounds and
Lebesgue's Dominated Convergence Theorem  therefore guarantee strong
convergence
\begin{equation}
u_h\to u\text{ in }L^{p}(0,T;W^{-s,r'}(\R^d)).
\label{eq:CV_stronh_H-alpha}
\end{equation}
By diagonal extraction we can moreover assume that $u\in
\mathcal{C}([0,\infty);W^{-s,r'})$ and that
\eqref{eq:CV_stronh_H-alpha} holds for all $T>0$.

Choosing $s,r$ large enough we can further assume that
$$
  X\subset\subset L^{q}(\R^d)\subset W^{-s,r'}(\R^d).
$$
We recall from Proposition~\ref{prop:Nikolskii_X_bound} that
$\{u_h\}_h$ is bounded in $L^p(\delta,T;X)$, and by
\eqref{eq:CV_stronh_H-alpha} it is also relatively compact in
$L^p(\delta,T;W^{-s,r'}(\R^d))$. By \cite[Lemma 9]{MR916688} we conclude
that $\{u_h\}_h$ is relatively compact in the intermediate
target space, i-e $u_h\to u$ in $L^p(\delta,T;L^q(\R^d))$ for some
subsequence. By a diagonal extraction we may assume that $u$ is
independent of $\delta,T$,
 and $u_h\to u$ in $L^p_{loc}(0,\infty;L^{q}(\R^d))$. Up to extraction of a further
 subsequence this classically implies the desired pointwise convergence a.e.
 $(t,x)\in (0,\infty)\times \R^d$.

 Let us turn now to the $L^p_{loc}([0,\infty);L^q(\R^d))$ convergence, and fix $1\leq p <\infty$ and $1\leq q<r_0$ as in our statement (we recall here that $r_0>\max\{m,2d/(d+1)\}$ is the initial integrability $u^0,v^0\in L^{r_0}(\R^d)$). Once again we only focus on the $u$ component. Thanks to the previous $(t,x)$ a.e. convergence we shall apply Vitali's convergence theorem in fixed bounded intervals $(0,T)$, and we only need to check that the sequence $\{u_h\}$ is tight and uniformly integrable in time and space. Tightness in time is obvious in bounded intervals, and tightness in space is easily obtained by Young's inequality
 $$
 \forall\, t\geq 0:\qquad \int_{\R^d}u_h^q |x|^{2\eps}\mathrm{d}x =\int_{\R^d}\underbrace{u_h^{q-\eps}}_{\in L^{(1/\eps)'}}\underbrace{\left(u_h |x|^{2}\right)^{\eps}}_{\in L^{1/\eps}}\mathrm{d}x\leq C
 $$
 uniformly in $h,t$ for some suitably small $\eps>0$. Here we used the uniform bounds on the second moment $\mathfrak{m}_2(u_h)\leq C$ and $q-\eps \leq q <r_0$ to control $\|u_h(t)^{q-\eps}\|_{L^{(1/\eps)'}(\R^d)}$ by local uniform bounds $\|u_h(t)\|_{L^{r_0}(\R^d)}\leq C_T$ (obtained by propagation of initial integrability, Proposition~\ref{prop:continuous_Lp_propagation}). The same propagation of integrability gives uniform bounds $\|u_h\|_{L^{\infty}(0,T);L^{r_0}(\R^d)}\leq C$, thus by immediate $L^1L^{r_0}$ interpolation we obtain equi-integrability in the form
 $$
 \|u_h\|_{L^{p+\eps}(0,T;L^{q+\eps}(\R^d))}\leq C_T
 $$
 for $\eps>0$ suitably small (essentially such that $1\leq q+\eps<r_0$). Applying Vitali's convergence theorem gives strong convergence
 $u_h\to u$ in $L^p([0,T);L^q(\R^d))$ as desired and the proof is complete.

\end{proof}
We can now prove our main result. The proof of
Theorem~\ref{theo:main_theo_linear} is identical to that of
Theorem~\ref{theo:main_theo} so we only establish the latter.
\begin{proof}[Proof of Theorem~\ref{theo:main_theo}]
\textbf{Step 1: convergence.} Recall that $u_h(t),v_h(t)\in
\mathcal{K}$ for all $t,h$, and that $\mathcal{K}$ is $L^1$-weak
relatively compact. Using the approximate $1/2$ H\"older
equicontinuity \eqref{eq:1/2_Holder_H-alpha} and applying the
previous refined Arzel\`a-Ascoli theorem, we can extract a
subsequence such that
\begin{equation*}
\forall t\geq 0:\qquad u_h(t),v_h(t)\rightharpoonup u(t),v(t)\text{
in }L^1(\R^d)
 \label{eq:CV_weak_uv_L1}
\end{equation*}
for some $u,v\in \mathcal{C}(0,T;\mathcal{P}(\R^d))$ and
$u(t),v(t)\in \mathcal{K}$ for all times. This entails the
$L^{\infty}(0,T;L^m(\R^d)\cap L^1(\R^d)((1+|x|^2)\mathrm{d}x))$
bounds. By standard truncation arguments we also get that
$u(t),v(t)$ are probability measures for all times, and because
$d_W^2$ is l.s.c for the $L^1$- weak convergence we can
moreover take the limit in
\eqref{eq:interpolated_approximate_Holder_estimate} to deduce that
$t\mapsto u(t),v(t)$ are $1/2$-H\"older continuous in
$(\mathcal{P},d_W)$. Since $u_h(0),v_h(0)=u^0,v^0$ we
can take the limit $u(0),v(0)=u^0,v^0$, which together with $u,v\in
\mathcal{C}^{1/2}([0,\infty);\mathcal{P})$ shows that the limit
$u,v$ satisfies the initial condition at least in the sense of
measures as desired.

We claim now that
\begin{equation}
 \nabla u_h^m,\nabla v_h^m\rightharpoonup\nabla u^m,\nabla v^m\quad\text{in }L^{2}(\delta,T;L^1(\R^d))
\label{eq:CV_NL_gradients_weak}
\end{equation}
for all $0<\delta<T$, and also
$$
\nabla u^m,\nabla v^m\in L^2(0,T;L^1(\R^d)).
$$
To see this fix a test function $\varphi\in L^{2}(\delta,T;L^{\infty}(\R^d))$, and write for small $h>0$
$$
\int_{\delta}^T\int_{\R^d}(\nabla
u_h^m)\varphi\,\mathrm{d}x\mathrm{d}t=
\int_{\delta}^T\int_{\R^d}2\nabla
u_h^{m/2}\big(u_h^{m/2}\varphi\big)\,\mathrm{d}x\mathrm{d}t.
$$
By Corollary~\ref{cor:interpolated_estimate_time-space_grad_np} and pointwise a.e convergence $u_h\to u$ we
can assume $\nabla u_h^{m/2}\rightharpoonup\nabla u^{m/2}$ in
$L^2(\delta,T;L^2(\R^d))$ for fixed $0<\delta<T$, and by diagonal extraction we can assume that the limit $\nabla u^{m/2}$ is independent of $\delta,T$. By \eqref{eq:Lp-Lq_CV} with $q=m<r_0$
it is easy to get $u_h^{m/2}\varphi\to u^{m/2}\varphi$ in $L^2(\delta,T;L^2(\R^d))$.
As a consequence we can pass to the limit
\begin{equation*}
\int_{\delta}^T\int_{\R^d}(\nabla
u_h^m)\varphi\,\mathrm{d}x\mathrm{d}t\underset{h\searrow 0}{\to} 2
\int_{\delta}^T\int_{\R^d}\nabla
u^{m/2}\big(u^{m/2}\varphi\big)\,\mathrm{d}x\mathrm{d}t=\int_{\delta}^T\int_{\R^d}\nabla
u^m\varphi\,\mathrm{d}x\mathrm{d}t
\end{equation*}
to obtain \eqref{eq:CV_NL_gradients_weak}. In particular by
Corollary~\ref{cor:interpolated_estimate_time-space_grad_np} we see
that $\forall0<\delta<T$, it holds
\begin{align*}
\big\|\nabla u^{m}\big\|_{L^2(\delta,T;L^1(\R^d))}
&\leq\liminf\limits_{h\searrow 0}\big\|\nabla u_h^{m}\big\|_{L^2(\delta,T;L^1(\R^d))}\\
&\leq 2\liminf\limits_{h\searrow 0}\big\|u_h^{m/2}\big\|_{L^{\infty}(\delta,T;L^2(\R^d))}\big\|\nabla u_h^{m/2}\big\|_{L^2(\delta,T;L^2(\R^d))}\\
&\leq C(1+T)^{1/2}
\end{align*}
uniformly in $\delta>0$, whence $\nabla u^m\in L^2(0,T;L^1(\R^d))$
for all $T>0$.

For the drift terms, recall from \eqref{eq:structural_potentials} that $\nabla U(x),\nabla V(x)$ are at least locally bounded. From Theorem~\ref{theo:uh->u_a.e.} we have $u_h,v_h\to u,v$ at least in $L^1(0,T;L^1(\R^d))$, thus $u_h\nabla U,v_h\nabla V\rightharpoonup u\nabla U,v\nabla V$ in $L^1(0,\infty;L^1_{loc}(\R^d))$ when tested with compactly supported functions $\varphi(x)$. Moreover by uniform bounds on the second moments and linear behavior of $\nabla U,\nabla V$ it is easy to check that the limit $u\nabla U,v\nabla V\in L^\infty(0,\infty;L^1(\R^d))$.

Regarding now the coupling terms $u_h\nabla \psi_h,v_h\nabla\psi_h$, note from
\eqref{eq:Lp-Lq_CV} with $q=2d/(d+1)<r_0$ that we have in particular
$u_h,v_h\to u,v$ in $L^p(\delta,T;L^{2d/d+1}(\R^d))$ for all
$p\in[1,\infty)$.
By strong $L^{2d/(d+1)}(\R^d)\to L^{2d/(d-1)}(\R^d)$ continuity in \eqref{eq:HLS_nablaG_p} we thus obtain $\nabla\psi_h=(\nabla
G)*[u_h-v_h]\to(\nabla G)*[u-v]=\nabla\psi$ in
$L^{p}(\delta,T;L^{2d/(d-1)}(\R^d))$ for all $p\in[1,\infty)$, so by
H\"older inequality
\begin{equation*}
u_h\nabla\psi_h,v_h\nabla\psi_h\to
u\nabla\psi,v\nabla\psi\qquad\text{in }L^{p}(\delta,T;L^1(\R^d))
\end{equation*}
for all $p\in[1,\infty)$ and $0<\delta<T$.
Using the
$L^{\infty}(\delta,T;L^{2d/(d+1)}(\R^d))$ bounds for $u_h,v_h$ this
gives $L^{p}(\delta,T;L^1(\R^d))$ bounds uniformly in $p\geq 1$ and
$\delta>0$, thus $u\nabla\psi,v\nabla\psi\in
L^{\infty}(0,T;L^1(\R^d))\subset L^2(0,T;L^1(\R^d))$ for all $T>0$.

\noindent\textbf{Step 2: the weak solution.} Fix any test-function
$\varphi\in\mathcal{C}^{\infty}_c(\R^d)$ and $0<T_1<T_2$, and let
$N_1=\lfloor T_1/h\rfloor,N_2=\lfloor T_2/h\rfloor$. Let
$\nabla{q}^{(n)}$ be the optimal map in
$u_h^{(n+1)}=\big(\nabla{q}^{(n)}\big)_{\#}u_h^{(n)}$. Expanding
\begin{align}
\varphi(x)-\varphi(\nabla q^{(n)}(x))&= (\nabla\varphi(\nabla
q^{(n)}(x)))\cdot(x-\nabla q^{(n)}(x))\nonumber\\
&\qquad+\mathcal{O}\big(|x-\nabla
q^{(n)}(x)|^2\|D^2\varphi\|_{L^\infty(\R^d)}\big).
 \label{taylor expansion}
\end{align}
Taking $\zeta=\nabla\varphi$ in the Euler-Lagrange equation
\eqref{eq:discrete_Euler-Lagrange_u}, and summing from $n=N_1$ to
$n=N_2-1$ we compute by
\eqref{eq:discrete_square_distance_estimate},
\eqref{eq:discrete_Euler-Lagrange_u} and \eqref{taylor expansion}
that
\begin{align*}
&\int_{\R^d}\big(u_h(T_2)-u_h(T_1)\big)\varphi\,\mathrm{d}x \\
&=\int_{\R^d}\big(u_h^{(N_2)}-u_h^{(N_1)}\big)\varphi\,\mathrm{d}x\\
&=\sum\limits_{n=N_1}^{N_2-1}\int_{\R^d}\big(u_h^{(n+1)}-u_h^{(n)}\big)\varphi\,\mathrm{d}x\\
&=\sum\limits_{n=N_1}^{N_2-1}\int_{\R^d}\big[\varphi\circ\nabla{q}^{(n)}-\varphi\big]u_h^{(n)}\,\mathrm{d}x\\
&=\sum\limits_{n=N_1}^{N_2-1}\int_{\R^d}\langle\nabla
q^{(n)}-\mathrm{Id},\nabla\varphi\circ\nabla q^{(n)}\rangle
u^{(n)}_h\,\mathrm{d}x \\
&\qquad+\sum\limits_{n=N_1}^{N_2-1}\mathcal{O}\Big(\|D^2\varphi\|_{L^\infty(\R^d)}d_W^2\big(u^{(n)}_h,u^{(n+1)}_h\big)\Big) \\
&=\sum\limits_{n=N_1}^{N_2-1}h\int_{\R^d}\left[
\Delta\varphi\big(u^{(n+1)}_h\big)^m-u^{(n+1)}_h\langle\nabla
U,\nabla\varphi\rangle-u^{(n+1)}_h\langle\nabla
\psi^{(n+1)}_h,\nabla\varphi\rangle\right]\,\mathrm{d}x\\
&\qquad\quad+\mathcal{O}\big(h\|D^2\varphi\|_{L^\infty(\R^d)}\big).
\end{align*}
Integrating by parts and exploiting \eqref{eq:discrete_square_distance_estimate}, at the continuous level this becomes
\begin{align*}
&\mathcal{O}\big(h\|D^2\varphi\|_{L^\infty(\R^d)}\big)+\int_{\R^d}\big(u_h(T_2)-u_h(T_1)\big)\varphi\,\mathrm{d}x\nonumber\\
&=-\int_{N_1h}^{N_2h}\int_{\R^d}\Big[\langle\nabla\big(u_h(t)\big)^m,\nabla\varphi\rangle
+\langle\nabla U,\nabla\varphi\rangle u_h(t) + u_h(t)\langle\nabla
\psi_h(t),\nabla\varphi\rangle\Big]\,\mathrm{d}x\mathrm{d}t.
\end{align*}
By step 1 we can take $h\rightarrow 0$ as
\begin{align*}
\int_{\R^d}\big(&u(T_2)-u(T_1)\big)\varphi\,\mathrm{d}x \\
 &=-\int_{T_1}^{T_2}\int_{\R^d}\big(\langle\nabla
 u^m,\nabla\varphi\rangle
 +u\langle\nabla U,\nabla\varphi\rangle  + u\langle\nabla
 \psi,\nabla\varphi\rangle\big)\,\mathrm{d}x\mathrm d t
\end{align*}
to obtain \eqref{eq:def_weak_formulation_u}.
 The equation for $v$ is similarly obtained.

\noindent\textbf{Step 3: Energy bounds and further regularity}. We first establish the energy bound \eqref{eq:energy_monotonicity}. Arguing as in step 1, using \eqref{eq:Lp-Lq_CV} with $q=2d/(d+2)<r_0$, and \eqref{eq:HLS_nablaG_p} it is easy to conclude that
$\nabla\psi_h\to\nabla\psi$ in $L^p_{loc}(0,\infty;L^2(\R^d))$ for all
$p\in[1,\infty)$. In particular $\nabla\Psi_h(t)\to\nabla\Psi(t)$ in $L^2(\R^d)$ and
$$
\mathcal E_{\rm{cpl}}(u_h(t),v_h(t))\to \mathcal E_{\rm{cpl}}(u(t),v(t))\qquad \mbox{a.e. }t\geq 0.
$$
From the energy control $\|\nabla\psi_h\|_{L^{\infty}(0,\infty;L^2(\R^d))}\leq C$ we also
have $\nabla\psi\in L^{\infty}(0,\infty;L^2(\R^d))$.
Similarly, from the $L^p_{loc}(0,\infty;L^q(\R^d))$ with $q=m<r_0$ it is easy to get
$$
\mathcal E_{\rm{diff}}(u_h(t),v_h(t))\to \mathcal E_{\rm{diff}}(u(t),v(t))\qquad \mbox{a.e. }t\geq 0.
$$
For the potential energy, we have $0\leq u_h(t,x)U(x)\to u(t,x)U(x)$ a.e. $x\in \R^d$ for a.e. fixed $t\geq 0$, and similarly for $v_hV$. By Fatou's lemma we conclude that
$$
\mathcal E_{\rm ext}(u(t),v(t))\leq \liminf\limits_{h\to 0}\mathcal E_{\rm ext}(u_h(t),v_h(t))\qquad \mbox{a.e. }t\geq 0.
$$
Summing $\mathcal E=\mathcal E_{\rm diff}+\mathcal E_{\rm ext}+\mathcal E_{\rm cpl}$ with $\mathcal E(u_h(t),v_h(t))\leq \mathcal E(u^0,v^0)$ at the discrete level finally entails the desired energy bound \eqref{eq:energy_monotonicity}. Note that the potential energy $\mathcal E_{\rm ext}$ is the most problematic term, because we cannot a priori conclude equality $\mathcal E_{\rm ext}(u(t),v(t))=\lim \mathcal  E_{\rm ext}(u_h(t),v_h(t))$ in the last display (in fact strong convergence would imply convergence of the second moments, thus convergence in $d_{W}$). As a consequence we only retrieve the one-sided inequality, and we are unable to conclude that the total energy is monotone nonincreasing in the limit.

Turning now to the
propagation of initial regularity \eqref{eq:Lp_Linfty_estimate},
assume that the initial datum $u^0,v^0\in L^{p}(\R^d)$ for some
$p\in[1,\infty]$. If $p<\infty$ then
Proposition~\ref{prop:continuous_Lp_propagation} bounds
$u_h(t),v_h(t)$ in $L^p(\R^d)$ uniformly in $h$ with exponential control
$$
\sup\limits_{t\in[0,\tau]}\left(\|u_h(t)\|_{L^p(\R^d)}+\|v_h(t)\|_{L^p(\R^d)}\right) \leq
Ce^{\lambda\tau}\left(\|u^0\|_{L^{p}(\R^d)}+\|v^0\|_{L^{p}(\R^d)}\right).
$$
Up to extraction of a further subsequence we can assume that $u_h,v_h\overset{*}{\rightharpoonup} u,v$ in $L^{\infty}_{loc}([0,\infty);L^p(\R^d))$, which immediately gives \eqref{eq:Lp_Linfty_estimate}. If now $u^0,v^0\in L^{\infty}(\R^d)$ clearly \eqref{eq:Lp_Linfty_estimate} holds
for arbitrarily large $p$. Our claim then easily follows by letting
$p\to \infty$ and the proof is achieved.

\end{proof}

%

\section{Appendix} \label{annex}
For $p>1$ let $L^p_w(\R^d)$ be the weak-$L^p$ spaces, which coincide with the usual Lorentz space $L^{p,\infty}(\R^d)$. The natural Banach norm is
\begin{equation}
\|w\|_{L^p_w(\R^d)}=\|w\|_{L^{p,\infty}(\R^d)}=\sup\limits_{t>0}\{t^{1/p}w^*(t)\},
\label{eq:def_Lpweak}
\end{equation}
where $w^*(t)$ is the symmetric-decreasing rearrangement of $w(x)$.
\begin{prop}
Denoting $\Phi=(-\Delta)^{-1}w=G*w$, the Dirichlet energy
$$
w\in L^1(\R^d)\mapsto \mathcal E_D(w)=\int\limits_{\R^d}|\nabla\Phi|^2\,\mathrm{d}x\in[0,+\infty]
$$
is lower semi-continuous for weak $L^1$ convergence.
\label{prop:Dirichlet_lsc}
\end{prop}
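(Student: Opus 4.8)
\emph{Proof proposal.} The plan is to deduce the result from the weak lower semicontinuity of the $L^2$ norm, the only genuine work being to identify the distributional limit of the gradients $\nabla\Phi_n=(\nabla G)*w_n$. Suppose $w_n\rightharpoonup w$ in $L^1(\R^d)$, i.e.\ in the $\sigma(L^1,L^\infty)$ topology, and write $\Phi_n:=G*w_n$, $\Phi:=G*w$; the goal is $\mathcal E_D(w)\leq\liminf_n\mathcal E_D(w_n)$. First I would reduce to the case $L:=\liminf_n\mathcal E_D(w_n)<\infty$ and, after extracting a subsequence (still weakly convergent to $w$), arrange that $\mathcal E_D(w_n)=\|\nabla\Phi_n\|_{L^2(\R^d)}^2\to L$, so in particular $\sup_n\|\nabla\Phi_n\|_{L^2}<\infty$. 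Since $d\geq3$, one has $G(x)=C|x|^{2-d}\in L^1(\R^d)+L^\infty(\R^d)$ and $|\nabla G(x)|=C|x|^{1-d}\in L^1(\R^d)+L^\infty(\R^d)$, so all the convolutions above are well defined and lie in $L^1_{loc}(\R^d)$, with distributional gradients represented by $\nabla\Phi_n=(\nabla G)*w_n$ and $\nabla\Phi=(\nabla G)*w$ in $L^1_{loc}$.

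The core step is to show $\nabla\Phi_n\to\nabla\Phi$ in $\mathcal D'(\R^d)$. Fix $\zeta\in\mathcal C^\infty_c(\R^d;\R^d)$ and set $\Psi(y):=\int_{\R^d}\nabla G(x-y)\cdot\zeta(x)\,\mathrm{d}x$. I would first check that $\Psi\in L^\infty(\R^d)$: splitting according to whether $|y|$ is small or large relative to $\operatorname{supp}\zeta$, and using the local integrability of $\nabla G$, yields $|\Psi(y)|\leq C(1+|y|)^{1-d}$. Then, by Fubini (justified by the boundedness of $y\mapsto\int_{\R^d}|\nabla G(x-y)||\zeta(x)|\,\mathrm{d}x$, shown the same way, so that $\int|w_n(y)|\big(\int|\nabla G(x-y)||\zeta(x)|\,\mathrm{d}x\big)\mathrm{d}y\leq C\|w_n\|_{L^1}<\infty$), one has
\[
\int_{\R^d}\nabla\Phi_n\cdot\zeta\,\mathrm{d}x=\int_{\R^d}w_n\,\Psi\,\mathrm{d}x ,
\]
and the identical identity with $(w_n,\Phi_n)$ replaced by $(w,\Phi)$. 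Since $w_n\rightharpoonup w$ in $L^1$ and $\Psi\in L^\infty=(L^1)'$, the right-hand side converges, which gives the claimed $\mathcal D'$ convergence for every such $\zeta$.

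To conclude, for any $\zeta\in\mathcal C^\infty_c(\R^d;\R^d)$ the Cauchy--Schwarz inequality gives
\[
\Big|\int_{\R^d}\nabla\Phi\cdot\zeta\,\mathrm{d}x\Big|=\lim_n\Big|\int_{\R^d}\nabla\Phi_n\cdot\zeta\,\mathrm{d}x\Big|\leq\Big(\lim_n\|\nabla\Phi_n\|_{L^2}\Big)\|\zeta\|_{L^2}=\sqrt L\,\|\zeta\|_{L^2},
\]
and taking the supremum over such $\zeta$ with $\|\zeta\|_{L^2}\leq1$, together with density of $\mathcal C^\infty_c$ in $L^2$, shows that $\nabla\Phi$ is represented by an $L^2(\R^d)$ field of norm $\leq\sqrt L$; hence $\mathcal E_D(w)=\|\nabla\Phi\|_{L^2(\R^d)}^2\leq L=\liminf_n\mathcal E_D(w_n)$, as desired.

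I expect the main obstacle to be precisely the identification of the limit: the hypothesis $w_n\rightharpoonup w$ in $L^1$ gives no control of $\Phi_n$ in any global $L^p$ space beyond the a priori $L^2$ bound on $\nabla\Phi_n$, so one cannot simply combine weak $L^2$ compactness of $\{\nabla\Phi_n\}$ with a separate convergence statement for $\Phi_n$. The argument goes through because, for $d\geq3$, convolving $\nabla G$ against a smooth compactly supported field lands in $L^\infty=(L^1)'$ — this is where the restriction $d\geq3$ enters — and everything else is routine.
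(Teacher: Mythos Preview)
Your proof is correct, and it takes a genuinely different route from the paper's argument. The paper works at the level of the potentials $\Phi_n$ themselves: it invokes the endpoint Hardy--Littlewood--Sobolev inequality \eqref{eq:HLS_G_1} to bound $\Phi_n$ uniformly in the weak Lebesgue space $L^{d/(d-2)}_w(\R^d)=L^{d/(d-2),\infty}(\R^d)$, extracts a weak-$\ast$ limit $\Phi$ by Banach--Alaoglu, and then identifies $\Phi$ with $G*w$ by a Liouville-type argument (the difference is harmonic and lies in $L^{d/(d-2)}_w$, hence is a polynomial that decays at infinity, hence vanishes). You instead work directly at the level of the gradients: your key observation is that for any $\zeta\in\mathcal C^\infty_c(\R^d;\R^d)$ the function $\Psi(y)=\int(\nabla G)(x-y)\cdot\zeta(x)\,\mathrm{d}x$ belongs to $L^\infty(\R^d)$, which lets you pass the weak $L^1$ convergence of $w_n$ through the pairing $\int\nabla\Phi_n\cdot\zeta=\int w_n\Psi$ with no further machinery. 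This is more elementary---no Lorentz spaces, no duality of Lorentz spaces, no Liouville theorem---and in fact your argument only uses $\nabla G\in L^1+L^\infty$ rather than any property of $G$ itself. The paper's approach, on the other hand, also yields as a byproduct the identification of the weak-$\ast$ limit of the potentials $\Phi_n$ in a concrete function space, which is not needed for the proposition at hand but could be of independent use.
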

\begin{proof}
Let $w_n \rightharpoonup w$ in $L^1(\R^d)$. If $\liminf \mathcal E_D(w_n)=+\infty$ our statement is trivial, so up to extraction of a subsequence we may assume that $\liminf \mathcal E_D(w_n) = \lim \mathcal  E_D(w_n) = C<+\infty$, in particular we have that
\begin{equation}
\lim \|\nabla\Phi_n\|_{L^2(\R^d)}^2=\liminf \mathcal E_D(w_n)<+\infty.
\label{eq:annex_nabla_Phi_leq_C}
\end{equation}
Now since $w_n\rightharpoonup w$ in $L^1(\R^d)$ we see that $w_n$ is bounded in $L^1(\R^d)$, hence by\eqref{eq:HLS_G_1},
$$
\|\Phi_n\|_{L^{d/(d-2)}_w(\R^d)}\leq C\|w_n\|_{L^1(\R^d)}\leq C.
$$
Since
$L^{d/(d-2)}_w(\R^d)=L^{d/(d-2),\infty}(\R^d)=\left(L^{d/2,1}(\R^d)\right)'$
is a topological dual we can also assume, by the Banach-Alaoglu
theorem, and up to a further subsequence, that
$$
\Phi_n\overset{*}{\rightharpoonup}\Phi \quad\text{in }L^{d/(d-2)}_w(\R^d).
$$
By \eqref{eq:annex_nabla_Phi_leq_C} and up to a subsequence we see that
$$
\|\nabla \Phi\|_{L^2(\R^d)}^2\leq \liminf \|\nabla \Phi_n\|_{L^2(\R^d)}^2=\liminf \mathcal E_D(w_n).
$$
As a consequence it suffices to prove that $\Phi=G*w$, since then $\mathcal E_D(w)=\|\nabla \Phi\|_{L^2(\R^d)}^2\leq \liminf \mathcal E_D(w_n)$.

Set $\tilde{\Phi}=G*w\in L^{d/(d-2)}_w(\R^d)$ and let us prove that $\Phi-\tilde{\Phi}=0$. Since $-\Delta \Phi_n =w_n\rightharpoonup w=-\Delta \tilde{\Phi}$ in $L^1(\R^d)$ we have in particular that $\Phi-\tilde{\Phi}$ is harmonic. Because harmonic tempered distributions are polynomials and $L^{d/(d-2)}_w(\R^d)\subset \mathcal{S}'(\R^d)$ we get that $\Phi-\tilde{\Phi} \in L^{d/(d-2)}_w(\R^d)$ is polynomial. By \eqref{eq:def_Lpweak} we see that the polynomial $\Phi-\tilde{\Phi}$ decays at infinity, hence $\Phi-\tilde{\Phi}=0$ as claimed and the proof is complete.
\end{proof}
\subsection*{Acknowledgments}
We would like to thank Adrien Blanchet for fruitful discussions and
also his hospitality to LM. We are very grateful to the careful
reading and important suggestions from the anonymous referee. We
thank Marcus Wunsch for introducing us to this problem. This work is
partially supported by DMS 0806703, DMS 0635983, DMS 091501,
DMS-1217066, DMS 1419053, OISE 0967140, Portugal/CMU program, and
FCT SFRH/BPD/88207/2012.

%
%
\bibliographystyle{plain}
\bibliography{./PNP_new_4.0_bib}
\end{document}